\documentclass[review,onefignum,onetabnum]{siamart190516}

\usepackage{lipsum}
\usepackage{amsfonts}

\usepackage{graphicx}

\usepackage{epstopdf}
\usepackage{algorithmic}
\ifpdf
  \DeclareGraphicsExtensions{.eps,.pdf,.png,.jpg}
\else
  \DeclareGraphicsExtensions{.eps}
\fi



\usepackage{verbatim}
\usepackage{amsmath}
\usepackage{bbm}
\usepackage{graphicx}
\usepackage{float}
\usepackage{mathrsfs}
\usepackage{tikz-cd}
\usepackage{adjustbox}
\usepackage{multirow}
\usepackage{listings}
\usepackage{xcolor}
\usepackage{enumerate}
\usepackage{thmtools}
\usepackage{thm-restate}
\usepackage{epsfig}
\usepackage{subfigure}
\usepackage[all]{xy}
\usepackage{caption}
\usepackage{longtable}
\usepackage{mathtools}
\usepackage{url}
\usepackage{tcolorbox}
\usepackage{multirow}

\newlength\myindent
\setlength\myindent{1em}
\newcommand\bindent{%
  \begingroup
  \setlength{\itemindent}{\myindent}
  \addtolength{\algorithmicindent}{\myindent}
}
\newcommand\eindent{\endgroup}


\newsiamremark{remark}{Remark}
\newsiamremark{example}{Example}
\crefname{hypothesis}{Hypothesis}{Hypotheses}
\newsiamthm{claim}{Claim}
\newtheorem{assumption}{Assumption}

\newcommand{\R}{\mathbb{R}}
\def\conv{\mathrm{conv}}
\DeclareMathOperator*{\argmin}{argmin}
\def\Bern{\mathrm{Bern}}

\definecolor{dgreen}{rgb}{0.00,0.49,0.00}
\definecolor{Brown}{rgb}{0.45,0.0,0.05}

\newcommand{\probaSelec}{P_i^{k,j}}


\headers{Large-scale nonconvex optimization}{J.F. Bonnans, K. Liu, N. Oudjane, L. Pfeiffer, C. Wan}

\title{Large-scale nonconvex optimization: randomization, gap estimation, and numerical resolution\thanks{Submitted to the editors on April 05, 2022.
\funding{J.F.\@ Bonnans was partially supported by the FiME Lab Research Initiative (Institut Europlace de Finance). This article benefited from the support of the FMJH Program PGMO and from the support to this program from EDF.}}}

\author{
J.\@ Frédéric Bonnans\thanks{
Universit{\'e} Paris-Saclay, CNRS, CentraleSup{\'e}lec, Inria, Laboratoire des signaux et syst{\`e}mes, 91190, Gif-sur-Yvette, France
(\email{frederic.bonnans@inria.fr}, \email{laurent.pfeiffer@inria.fr}).
}
\and
Kang Liu\footnotemark[2] \thanks{
Institut Polytechnique de Paris, CNRS, Ecole Polytechnique, CMAP, 91128 Palaiseau, France
(\email{kang.liu@polytechnique.edu}).
}
\and
Nadia Oudjane\thanks{
OSIRIS Department, EDF Lab, Paris-Saclay, and FiME, France
(\email{nadia.oudjane@edf.fr},
\email{cheng.wan.05@polytechnique.org}).
}
\and
Laurent Pfeiffer\footnotemark[2]
\and
Cheng Wan\footnotemark[4]
}

\usepackage{amsopn}

\ifpdf
\hypersetup{
  pdftitle={Large-scale nonconvex optimization: randomization, gap estimation, and numerical resolution},
  pdfauthor={Bonnans, Liu, Oudjane, Pfeiffer, Wan}
}
\fi

\nolinenumbers

\begin{document}

\maketitle

\begin{abstract}
We address a large-scale and nonconvex optimization problem, involving an aggregative term. This term can be interpreted as the sum of the contributions of $N$ agents to some common good, with $N$ large. We investigate a relaxation of this problem, obtained by randomization. The relaxation gap is proved to converge to zeros as $N$ goes to infinity, independently of the dimension of the aggregate. We propose a stochastic method to construct an approximate minimizer of the original problem, given an approximate solution of the randomized problem. McDiarmid's concentration inequality is used to quantify the probability of success of the method. We consider the Frank-Wolfe (FW) algorithm for the resolution of the randomized problem. Each iteration of the algorithm requires to solve a subproblem which can be decomposed into $N$ independent optimization problems. A sublinear convergence rate is obtained for the FW algorithm. In order to handle the memory overflow problem possibly caused by the FW algorithm, we propose a stochastic Frank-Wolfe (SFW) algorithm, which ensures the convergence in both expectation and probability senses. Numerical experiments on a mixed-integer quadratic program illustrate the efficiency of the method.
\end{abstract}

\begin{keywords}
Large-scale and nonconvex optimization, aggregative optimization, relaxation, decentralization, Frank-Wolfe algorithm, concentration inequalities, multi-agent optimization, privacy-preserving methods.
\end{keywords}

\begin{AMS}
 49M20, 49M27, 90C06, 90C26
\end{AMS}

\section{Introduction}

\paragraph{Problem formulation}

This article is devoted to the theoretical analysis and the numerical resolution of the following large-scale, aggregative, and nonconvex optimization problem:
\begin{equation*} \label{pb:aggre} \tag{P}
\inf_{x\in \mathcal{X}}J(x)\coloneqq f(G(x)), \qquad
\text{where: }
\begin{cases}
\begin{array}{rl}
G(x)= & {\displaystyle \frac{1}{N}  \sum_{i=1}^{N}g_i(x_i) } \\[1.5em]
\mathcal{X} = & \prod_{i=1}^N \mathcal{X}_i.
\end{array}
\end{cases}
\end{equation*}
Here, $N$ can be seen as the number of agents and is assumed to be large.
The mappings $g_i \colon \mathcal{X}_i \rightarrow \mathcal{E}$ are given and referred to as the contribution mappings. The space $\mathcal{E}$ is a real Hilbert space.
The main feature of this problem is the aggregative form of the function $G\colon \mathcal{X}\to \mathcal{E}$, which is defined as the average of the $N$ mappings $g_i$.
We will call $G(x)$ the aggregate. Let us emphasize that the dimension $q$ of the aggregate space $\mathcal{E}$ can be arbitrarily large and possibly infinite.
While very few structural assumptions are made on the sets $\mathcal{X}_i$ and the mappings $g_i$, we will assume that $f$ is convex, with a Lipschitz-continuous gradient {and that the image sets $g_i(\mathcal{X}_i)$ are all bounded}. A central idea in this work is that the problem can be well approximated by a convex problem when $N$ is large.

In various examples of interest, the function $f$ has a separable structure as defined below. It turns out that taking into account the separability of $f$, when possible, allows us to refine our theoretical results (more precisely, to reduce some of the constants of interest, see Remark \ref{rm:M}). From now on, we suppose that $\mathcal{E}$ is the Cartesian product of $M$ separable Hilbert spaces denoted $\mathcal{E}_j$, for $j=1,\ldots,M$. We assume that $f$ is additively separable, that is to say, we assume that
\begin{equation*}
    f(y)= \sum_{j=1}^M f_j(y_j), \quad \forall (y_1,\ldots,y_M) \in \prod_{i=1}^M \mathcal{E}_j,
\end{equation*}
where $f_j \colon \mathcal{E}_j \rightarrow \R$.
Note that when $f$ is not separable, one can take $M=1$ and $\mathcal{E}_1= \mathcal{E}$.
We assume that the contribution mappings are of the form
\begin{equation*}
g_i(x_i)=  \big(g_{ij}(x_i) \big)_{j=1,\ldots,M},
\quad
\text{where $g_{ij} \colon \mathcal{X}_i \rightarrow \mathcal{E}_j$.}
\end{equation*}
Hence the criterion $J$ of problem \eqref{pb:aggre} writes
\begin{equation} \label{pb:aggre_revised} 
J(x) = f(G(x)) = \sum_{j=1}^M f_j\Big(\frac{1}{N} \sum_{i=1}^N g_{ij}(x_i) \Big).
\end{equation}
We present and discuss some motivating examples in Section \ref{sec:comments}, arising from social welfare problems, optimal control problems and
supervised learning. 

\paragraph{Related works and methods} 

Let us return to the general problem \eqref{pb:aggre}. Classical Lagrangian relaxation (Chapter XII of \cite{JBHU}) methods can be relevant here because the dual problem is separable in the sense below, thanks to the aggregative form of $G$. To see this, let us reformulate \eqref{pb:aggre} as:  $\inf_{(x,v) \in \mathcal{X} \times \mathcal{E}} f(v)$, subject to the constraint that $v= G(x)$. Its dual problem is:
\begin{equation}\label{eq:lagrange}
	\sup_{\lambda \in \mathcal{E}} \, \big( -f^{*}(\lambda) + \Phi(\lambda) \big),
\end{equation}
where $f^*$ is the Fenchel conjugate function of $f$, and $\Phi(\lambda)$ is defined by
\begin{equation} \label{eq:lagrange2}
\Phi(\lambda) \coloneqq \inf_{x\in \mathcal{X}} \langle \lambda, G(x) \rangle
= \frac{1}{N} \sum_{i=1}^N  \inf_{x_i \in \mathcal{X}_i} \langle \lambda, g_i(x_i) \rangle .
\end{equation}
One sees that $\Phi(\lambda)$ can be evaluated by solving $N$ independent sub-problems, one for each $i$ in $\{1,\ldots,N\}$. 
Solving these sub-problems can be much easier than addressing frontally the original problem with $N$ coupled variables.
This approach has been extensively employed in convex settings \cite{Seguret2020,Pacaud}.
However, the nonconvexity of the problem raises
two major difficulties: the potentially large duality gap  and the reconstruction of a primal solution from the dual optimal solution.

These two difficulties are addressed by Wang in \cite{Wang2017}. She proposed a convex relaxation of the problem, based on a geometrical approach, that allows to obtain an estimate of the duality gap of order $\mathcal{O}(q^2/N^2)$. Her main tool was the Shapley-Folkman lemma \cite{Starr1969},
which allows to show that the image of $G$ is close to a convex set.
This idea was already present in the seminal work of Aubin and Ekeland in \cite{Aubin1976}, dealing with a different setting involving a coupling constraint. {We refer the reader to \cite{kerdreux2022stable} for the most recent improvements dealing with this class of problems.}
We also refer to \cite{Wang2017} for a more exhaustive of mathematical works dedicated to the estimation of the duality gap, where a kind of convexification occurs.
After having solved the dual problem by a cutting plane method and then found an approximate solution to the relaxed primal problem via a projection problem, Wang's method recovers an approximate solution to the original nonconvex problem, by computing a Shapley-Folkman decomposition of the aggregate with a standard linear programming approach.

There exist another important class of methods for large-scale optimization problems which are the block coordinate descent algorithm and its variants \cite{Beck2013, Fercoq2016}. These methods may not be applicable without additional assumptions on the sets $\mathcal{X}_i$ and the maps $g_i$ (in the current framework, the sets $\mathcal{X}_i$ could be discrete). Even if we make additional regularity assumptions, they may be inefficient, in particular because the cost function $J$ is not convex in general.

\paragraph{Contributions and organization of the paper}

We first introduce in Section \ref{sec:rando} a convex relaxation of the original problem \eqref{pb:aggre}. The relaxed problem is obtained by randomization, that is to say, we replace the variables $x_i$ by probability measures $\mu_i$ on $\mathcal{X}_i$. The contribution mappings $g_i(x_i)$ are replaced by $\int_{\mathcal{X}_i} g_i(x_i) d \mu_i(x_i)$; these terms are linear with respect to $\mu_i$. The resulting randomized cost function, denoted $\mathcal{J}$, is convex, and so is the randomized problem. We give a first upper bound of the relaxation gap of order $\mathcal{O}(1/N)$.
The randomized problem has a stochastic interpretation: it amounts to replace the variables $x_i$ by independent random variables $X_i$ of probability distribution $\mu_i$, and to replace $g_i(x_i)$ by the expectation of $g_i(X_i)$. To derive a good candidate (for \eqref{pb:aggre}), given an approximate solution to the randomized problem $\mu= (\mu_1,...,\mu_N)$, we propose to simulate random variables $X_i$ with probability distribution $\mu_i$. We will call this technique the \emph{selection method}.
We give a sharp estimate of the probability of error for the selection method. More precisely, we estimate the probability that $J(X_1,\ldots,X_N) \geq \mathcal{J}(\mu) + \big( \frac{C}{N} + \epsilon \big)$, given $\epsilon > 0$. The proof relies on McDiarmid's inequality, a concentration inequality \cite{Mcdiarmid1989}.

{From a numerical point of view, our main contribution is a method which is parallelizable, which benefits from the convexity of the randomized problem, but avoids the difficulty of the manipulation of probability measures (arising in the formulation of the randomized problem).}
This could be achieved by
combining the Frank-Wolfe (FW) algorithm \cite{Dunn1978,Jaggi2013}, applied to the randomized problem, and the selection method described previously.
The resulting algorithm, called stochastic Frank-Wolfe (SFW) algorithm, is described and analyzed in Section \ref{sec:BFW}. Each iteration of the algorithm requires to solve a subproblem of the form \eqref{eq:lagrange2}, which is decomposable into $N$ subproblems. Resorting to the selection method, we avoid to manipulate explicitely probability measures on the sets $\mathcal{X}_i$, which may otherwise cause memory issues. The SFW method is able to find an $\mathcal{O}(1/N)$-solution to problem \ref{pb:aggre}. In addition, we estimate the probability that the iterate $x_k$ is $\big(\frac{C}{k}+ \epsilon\big)$-optimal, for $k \leq 2N$, where $k$ is the iteration counter. This result relies on concentration inequalities for martingales \cite{Delyon2015} which generalize McDiarmid's inequality.

{Let us note that many articles in the literature are dedicated to stochastic variants of the Frank-Wolfe algorithm. These variants are concerned with the situation where the cost function is in the form of the expectation of a random cost and where its gradient is evaluated by sampling. See for example \cite{ding2018frank,hassani2020stochastic,hazan2016variance,mokhtari2020stochastic,yurtsever2019conditional}, see also \cite{fadili2021inexact,locatello2019stochastic} and the references therein. Let us emphasize that the stochasticity of our algorithm has another origin, namely the selection method.
In all these articles, convergence is established in expectation; to our knowledge, only the article \cite{tang2022high} quantifies the probability of success of some stochastic method based on the Frank-Wolfe algorithm.}

Our last theoretical contribution is a sharp estimate of the relaxation gap, of order $\mathcal{O}(q \wedge N/N^2)$, where $q$ is the (potentially infinite) dimension of the aggregate space $\mathcal{E}$. It is proved in Section \ref{sec:refine}. It relies on a geometrical relaxation of problem \eqref{pb:aggre}, shown to be equivalent to the relaxation by randomization. The relaxation gap is estimated with the help of a measure of nonconvexity for sets (introduced in \cite{Cassels1975}) and with the help of the Shapley-Folkman lemma \cite{Starr1969}. We also give an estimate of the price of decentralization (as defined by Wang in \cite{Wang2017}).
We conclude the section with a detailed comparison of our approach and the one of \cite{Wang2017}.

{Section \ref{sec:comments} is dedicated to examples and discussions on numerical aspects.}
We provide in  section \ref{sec:num} numerical results for a mixed-integer linear-quadratic program.

\subsection{Notations}

\paragraph{On sets} For two sets $\mathcal{A}$ and $\mathcal{B}$  in a normed vector space $\mathcal{X}$, we denote by $d(\mathcal{A}) \colon= \sup_{x,y\in \mathcal{A}} \|x-y\|_{\mathcal{X}}$ the diameter of $\mathcal{A}$,  by $\mathcal{A}+\mathcal{B} = \left\{ x + y \mid x\in \mathcal{A}, y\in \mathcal{B}\right\}$ the Minkowski sum of $A$ and $B$, by $\lambda \mathcal{A} = \{\lambda x \mid x\in \mathcal{A}\}$  the scalar multiplication of $A$ with  $\lambda \in \mathbb{R}$ and by $\conv(\mathcal{A})$ the convex hull of $\mathcal{A}$. Note that $\conv(\mathcal{A} + \mathcal{B})= \conv(\mathcal{A}) + \conv(\mathcal{B})$.

For all $i \in \{ 1,\ldots, N \}$, we denote $\mathcal{X}_{-i}= \big( \prod_{i'=1}^{i-1} \mathcal{X}_{i'} \big) \times \big( \prod_{i'=i+1}^N \mathcal{X}_{i'} \big)$. Given $x \in \mathcal{X}$, we denote $x_{-i}= (x_1,\ldots,x_{i-1},x_{i+1},\ldots, x_N) \in \mathcal{X}_{-i}$. From time to time, we represent $x$ by the pair $(x_i,x_{-i})$.

\paragraph{On functions} Let $\mathcal{H}$ be a {real} Hilbert space. {Let $\langle \cdot, \cdot \rangle_{\mathcal{H}}$ and $\| \cdot \|_{\mathcal{H}}$ denote the corresponding scalar product and norm.} Let $F \colon \mathcal{H} \rightarrow \R \cup \{ + \infty \}$.
The domain of $F$, denoted by $\mathrm{dom}(F)$, is defined by $\mathrm{dom}(F) = \{ x \mid F(x) \neq + \infty\}$. {When $F$ is differentiable, we denote its gradient by $\nabla F$. The gradient is defined as a function from $\mathcal{H}$ to itself. We say that $\nabla F$ is $L$-Lipschitz on a subset $\mathcal{A}$ of $\mathcal{H}$ if for any $x,y\in \mathcal{A}$, we have 
\begin{equation} \label{eq:grad_lip}
\|\nabla F(x) - \nabla F(y)\|_{\mathcal{H}}\leq L \|x-y\|_{\mathcal{H}}.
\end{equation}
}
The subgradient of $F$ at some point $x \in \mathrm{dom}(F)$ is denoted by $\partial F(x)$ and defined by
\begin{equation*}
\partial F(x) = \{ p \in \mathcal{H} \mid F(y ) \geq F(x) + \langle  p,y-x \rangle, \forall \, y \in \mathcal{H}\}.
\end{equation*}
The Fenchel conjugate of $F$ is denoted by $F^{*}\colon H \rightarrow \mathbb{R}$ and defined by
$F^{*}(p) = \sup_{x \in \mathcal{H}} \ \langle p, x\rangle - F(x)$.
 
\paragraph{On measures} Given a set $\Omega$, we denote by $\delta_x$ the Dirac distribution at some point $x \in \Omega$. We denote by $\mathcal{P}_{\delta}(\Omega)$ the set of finitely supported probability distributions, defined by
{
\begin{equation*}
\mathcal{P}_{\delta}(\Omega)
\coloneqq
\Bigg\{
\sum_{k=1}^K \lambda_k \delta_{x_k}
\, \Big| \, K \in \mathbb{N}, \, (\lambda_k)_{k=1}^K \in (\R_+)^K, \, (x_k)_{k=1}^K \in \Omega^K, \, \sum_{k=1}^K \lambda_k = 1
\Bigg\}.
\end{equation*}
}
Let $\mu = \sum_{k=1}^K \lambda_k \delta_{x_k} \in \mathcal{P}_{\delta}(\Omega)$. Given a Hilbert space $\mathcal{H}$ and a mapping $F \colon \Omega \rightarrow \mathcal{H}$, we denote
 \begin{equation*}
 E_{\mu} \big[ F \big]= \sum_{k=1}^K \lambda_k F(x_k), \qquad
 \sigma_{\mu}^2 \big[ F \big]= \sum_{k=1}^K  \lambda_k \big\| F(x_k) -  E_{\mu} \big[ F \big] \big\|_{\mathcal{H}}^2.
 \end{equation*}
In other words, $E_{\mu} \big[ F \big]$ is the integral of $F$ with respect to the measure $\mu$ and $\sigma_{\mu}^2 \big[ F \big]$ is the variance of the probability measure $\sum_{j=1}^J \lambda_j \delta_{F(x_j)}$, in the sense of \cite[Remark 7.5]{Villani2003}.
Finally, the Bernoulli distribution with parameter $\omega\in[0,1]$ is denoted by $\Bern(\omega)$.

\paragraph{On numbers and real-valued random variables} We denote by $m \wedge n$ the minimum of the numbers $m$ and $n$ in $\mathbb{R} \cup \{ + \infty \}$.
Let $X$ be a real-valued random variable. The expectation of $X$ is denoted by $\mathbb{E}[X]$, the variance of $X$ is denoted by $\mathrm{Var}(X)$ and the conditional expectation of $X$ w.r.t.\@ some $\sigma-$algebra $\mathcal{F}$ is denoted by $\mathbb{E}[X\mid \mathcal{F}]$.
Given $\mu \in \mathcal{P}_{\delta}(\Omega)$ and a random variable $X$ in $\Omega$, the notation $X \sim \mu$ indicates that  $\mu$ is the probability distribution of $X$. 

\section{Relaxation by randomization and gap estimation}\label{sec:rando}

In this section we first make a structural assumption on the general problem of interest, problem \eqref{pb:aggre}.
Next we introduce a relaxation of the problem, obtained by randomization. We give an upper bound of the randomization gap in Proposition \ref{prop:gap}. 
Finally we propose a method to recover an approximate solution to \eqref{pb:aggre}, given an approximate solution to the randomized problem.
Its performance is investigated in Theorem \ref{thm:main1}.

\subsection{Assumptions and constants}

{
We recall that $\mathcal{E}$ is the Cartesian product of $M$ separable real Hilbert spaces $\mathcal{E}_j$. We denote by $\langle \cdot, \cdot \rangle_{\mathcal{E}_j}$ the associated scalar products and by $\| \cdot \|_{\mathcal{E}_j}$ the corresponding norms. Let us emphasize that we will not consider any other norm in the spaces $\mathcal{E}_j$. We equip $\mathcal{E}$ with the scalar product $\langle \cdot, \cdot \rangle$, defined by
$\langle (y_1,\ldots,y_M), (y_1',\ldots,y_M') \rangle
= \sum_{j=1}^M \langle y_i, y_i' \rangle_{\mathcal{E}_j}
$
and we denote by $\| \cdot \|$ the corresponding norm.
}

For any $i=1,\ldots,N$ and for any $j=1,\ldots,M$, we denote
\begin{equation*}
S_{ij}\coloneqq \big\{ g_{ij}(x_i) \mid x_i\in\mathcal{X}_i \big\} \qquad \text{and} \qquad S_{j}\coloneqq \frac{1}{N} \sum_{i=1}^N S_{ij}. 
\end{equation*}
The following regularity assumption will be in force all along the article.

\begin{assumption}\label{ass1}  
 For $i=1,2,\ldots,N$ and $j=1,2\ldots,M$:
\begin{enumerate}
\item \label{ass1.1} The range set $S_{ij}$ in $\mathcal{E}_j$ has finite diameter $d_{ij} \coloneqq d(S_{ij})$.
\item \label{ass1.2}The function $f_j $ is $L_j$-Lipschitz on $\conv\,(S_j)$.
\item \label{ass1.3} The function $f_j$ is continuously differentiable on a neighborhood of $\conv\, (S_{j})$, and $\nabla f_j$ is $\tilde{L}_j-$Lipschitz on $\conv\, (S_j)$,  {in the sense of \eqref{eq:grad_lip}}.
\end{enumerate}	
\end{assumption}

We next define two constants $C_0 >0$ and $C_1 > 0$ by
\begin{equation*}
C_0 = \sum_{j=1}^{M} \Big( L_j
\max_{1 \leq i \leq N} \left\{ d_{ij} \right\} \Big),
\quad \text{and} \quad
C_1= \frac{1}{N} \sum_{j=1}^M \Big( \tilde{L}_j \sum_{i=1}^N d_{ij}^2 \Big).
\end{equation*}

\begin{remark}
We will regularly employ notations of the form $O(h(N,q,k))$, where $h$ is an explicit function of $N$, $q$ (the dimension of $\mathcal{E}$), and $k$ (some iteration counter). We use it to express the fact that some variable is bounded by $C\, h(N,q,k)$, where the constant $C$ only depends on $\big(\max_{1 \leq i \leq N} d_{ij} \big)_{j=1,\ldots,M}$ and the Lipschitz moduli $(L_j)_{j=1,\ldots,M}$ and $(\tilde{L}_j)_{j=1,\ldots,M}$. With this convention in mind, we have
\begin{equation*}
C_0 = O(1) \quad \text{and} \quad
C_1 = O(1).
\end{equation*}
\end{remark}

\begin{remark}
Our results can be applied to aggregative problems of the form
\begin{equation*}
\inf_{x \in \mathcal{X}} \
\sum_{j=1}^{M} f_j \Big( \sum_{i=1}^{N} \hat{g}_{ij}(x_i) \Big),
\end{equation*}
i.e.\@ of the same form as in \eqref{pb:aggre}, but without the coefficient $\frac{1}{N}$. Indeed, it suffices to define $g_{ij}= N \hat{g}_{ij}$ to come down to the formulation \eqref{pb:aggre} and to use the fact that $d(g_{ij}(\mathcal{X}_i))= Nd(\hat{g}_{ij}(\mathcal{X}_i))$.
The introduction of the coefficient $\frac{1}{N}$ induces a natural scaling of the problem as $N$ increases. It also enables to us to highlight the convexification of the problem as $N$ becomes large, assuming that the coefficients $d_{ij}$ are uniformly bounded.
\end{remark}

We state in the following lemma a straightforward inequality, exhibiting the role of the constant $C_0$. Note that the role of the constant $C_1$ will be revealed in Lemma \ref{prop:gap}.

\begin{lemma} \label{lemma:easy_ineq}
Let Assumption \ref{ass1} be satisfied.
For all $i \in \{ 1,\ldots, N \}$, for all $x_{-i} \in \mathcal{X}_{-i}$, $x_i$ and $x_i'$ in $\mathcal{X}_i$, it holds:
\begin{equation*}
|J(x_i',x_{-i}) - J(x_i,x_{-i})|
\leq \frac{C_0}{N}.
\end{equation*}
\end{lemma}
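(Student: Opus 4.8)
The plan is to exploit the additive structure of $J$ exhibited in \eqref{eq:explicit_J}, together with the separable form of the aggregate, and to observe that replacing $x_i$ by $x_i'$ perturbs only a single summand inside each $f_j$. Concretely, for each $j \in \{1,\ldots,M\}$ the argument of $f_j$ appearing in $J(x_i,x_{-i})$ and the one appearing in $J(x_i',x_{-i})$ coincide in every coordinate $i' \neq i$, so their difference reduces to
\[
\frac{1}{N}\Big( g_{ij}(x_i') - g_{ij}(x_i) \Big).
\]

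First I would check that both arguments are admissible points for the Lipschitz estimate. Since $g_{i'j}(x_{i'}) \in S_{i'j}$ for every $i'$, each aggregate $\frac{1}{N}\sum_{i'=1}^N g_{i'j}(x_{i'})$ is an element of $S_j = \frac{1}{N}\sum_{i'=1}^N S_{i'j}$, hence of $\conv(S_j)$. On this set, the second point of Assumption \ref{ass1} guarantees that $f_j$ is $L_j$-Lipschitz, so the displayed perturbation of the argument translates into a controlled perturbation of $f_j$.

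Next I would apply the triangle inequality over the index $j$ and then the Lipschitz property of each $f_j$ to obtain
\[
|J(x_i',x_{-i}) - J(x_i,x_{-i})|
\leq \sum_{j=1}^M \frac{L_j}{N}\, \big\| g_{ij}(x_i') - g_{ij}(x_i) \big\|.
\]
Using the first point of Assumption \ref{ass1}, both $g_{ij}(x_i')$ and $g_{ij}(x_i)$ lie in $S_{ij}$, whose diameter is $d_{ij}$, so $\| g_{ij}(x_i') - g_{ij}(x_i) \| \leq d_{ij} \leq \max_{1 \leq i \leq N} d_{ij}$. Substituting this bound and recognizing the definition of $C_0 = \sum_{j=1}^M \big( L_j \max_{1 \leq i \leq N} d_{ij} \big)$ yields the claimed inequality $|J(x_i',x_{-i}) - J(x_i,x_{-i})| \leq C_0/N$.

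The argument is elementary and I do not expect any genuine obstacle; the only point deserving explicit care is the verification that both aggregates belong to $\conv(S_j)$, which is precisely where the Lipschitz hypothesis of Assumption \ref{ass1} is stated and therefore where the estimate is legitimate to apply.
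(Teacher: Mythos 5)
Your proof is correct and is exactly the ``straightforward'' argument the paper has in mind (the paper omits the proof of Lemma~\ref{lemma:easy_ineq} entirely): isolate the single perturbed summand $\frac{1}{N}\big(g_{ij}(x_i')-g_{ij}(x_i)\big)$ in each coordinate $j$, apply the $L_j$-Lipschitz property of $f_j$ on $\conv(S_j)$ (both aggregates indeed lie in $S_j$), and bound the perturbation by the diameter $d_{ij}\leq \max_{1\leq i\leq N} d_{ij}$, which sums to $C_0/N$. Your explicit verification that the two aggregates belong to $\conv(S_j)$ is a welcome touch of care, but no further work is needed.
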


\subsection{The randomized problem} \label{subsec:relax}

The \emph{randomized problem} is obtained by replacing each optimization variable $x_i$ by a probability measure $\mu_i \in \mathcal{P}_{\delta}(\mathcal{X}_i)$. The contribution mappings $g_i(x_i)$ are replaced by their integral with respect to $\mu_i$, $E_{\mu_i}\big[ g_i \big]$. Denoting $\mathcal{P}_{\delta}= \prod_{i=1}^{N}\mathcal{P}_{\delta}(\mathcal{X}_i)$, we obtain
\begin{equation*} \label{pb:aggrer}\tag{PR}
\inf_{\mu \in \mathcal{P}_{\delta}}
\mathcal{J}(\mu):= f \Big( \frac{1}{N} \sum_{i=1}^N E_{\mu_i}\big[ g_i \big] \Big)
= \sum_{j=1}^M f_j \Big( \frac{1}{N} \sum_{i=1}^N E_{\mu_i}\big[ g_{ij} \big] \Big).
\end{equation*}
The following equality justifies the denomination of the relaxed problem: given $\mu \in \mathcal{P}_{\delta}$ and given $N$ random variables $X_i$ in $\mathcal{X}_i$ such that $X_i \sim \mu_i$, we have
\begin{equation} \label{eq:relax_random}
\mathcal{J}(\mu)
= f \Big( \frac{1}{N} \sum_{i=1}^N \mathbb{E}\big[ g_i(X_i) \big] \Big).
\end{equation}

\begin{remark}
Working with probability measures with finite support, we do not need to equip the sets $\mathcal{X}_i$ with a topology and to consider regularity assumptions on the mappings $g_i$. Note that the original problem and the randomized one do not necessarily have a solution under the standing assumptions of the article.
\end{remark}

Let $J^{*}$ and $\mathcal{J}^{*}$ denote the values of the primal problem \eqref{pb:aggre} and  the randomized problem \eqref{pb:aggrer} respectively. One is interested in comparing $J^{*}$ and $\mathcal{J}^{*}$. The next lemma gives a direct result for one direction of this comparison.

\begin{lemma} \label{lm: measure problem}
{Let Assumption \ref{ass1} hold true. Then $-\infty < \mathcal{J}^{*}\leq J^{*}$.}
\end{lemma}

\begin{proof}
{By the definitions of $E_{\mu_i}[g_{ij}]$ and $S_j$, we have that $\frac{1}{N}\sum_{i=1}^N E_{\mu_i}[g_{ij}]\in \conv(S_{j})$. Since $f_j$ is Lipschitz-continuous over the bounded set $\conv(S_{j})$, we deduce that $\mathcal{J}^{*}>-\infty$. Let $x \in \mathcal{X}$.} Define $\mu= (\delta_{x_1},\ldots,\delta_{x_N}) \in \mathcal{P}_{\delta}$. Then $\mathcal{J}(\mu)= J(x)$. As a consequence, inequality $\mathcal{J}^{*}\leq J^{*}$ follows.
\end{proof}

The \textit{randomization gap} is then defined as
\begin{equation*}
\text{randomization gap} =J^{*} - \mathcal{J}^{*} \geq 0.
\end{equation*}

Next we prove a first upper bound of the randomization gap, of order $O(\frac{1}{N})$.

\begin{proposition} \label{prop:gap}
Let Assumption \ref{ass1} hold true.
Let $\mu \in \mathcal{P}_{\delta}$ and let $(X_i)_{i=1,\ldots,N}$ denote $N$ independent random variables such that $X_i \sim \mu_i$.
Then,
\begin{equation}\label{eq:gap}
\mathbb{E}[J(X)] - {\mathcal{J}}(\mu)
\leq
\frac{1}{2N^2} \sum_{j=1}^{M} \Big( \tilde{L}_j \sum_{i=1}^{N} \sigma^2_{{\mu_i}} \big[ g_{ij} \big] \Big)
\leq
\frac{C_1}{2N},
\end{equation} 
where $X=(X_1,\ldots,X_N)$.
As a consequence, $J^{*} - \mathcal{J}^{*} \leq \frac{C_1}{2N}$.
\end{proposition}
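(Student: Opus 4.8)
The goal is to bound the randomization gap by estimating how much the expected value $\mathbb{E}[J(X)]$ can exceed the randomized value $\mathcal{J}(\mu)$, and then to deduce the bound on $J^* - \mathcal{J}^*$. The plan is to exploit the additive and separable structure in \eqref{eq:explicit_J}: since $J(x) = \sum_{j=1}^M f_j\big(\frac{1}{N}\sum_i g_{ij}(x_i)\big)$, it suffices to control, for each fixed $j$, the quantity $\mathbb{E}\big[f_j\big(\frac{1}{N}\sum_i g_{ij}(X_i)\big)\big] - f_j\big(\frac{1}{N}\sum_i \mathbb{E}[g_{ij}(X_i)]\big)$. This is exactly the gap between the expectation of a function of a random sum and the function evaluated at the mean, which is precisely the kind of quantity controlled by the smoothness of $f_j$.

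First I would fix $j$ and write $Y_j = \frac{1}{N}\sum_{i=1}^N g_{ij}(X_i)$, noting that $\mathbb{E}[Y_j] = \frac{1}{N}\sum_i \mathbb{E}[g_{ij}(X_i)]$ is exactly the argument appearing in $\mathcal{J}(\mu)$. Both $Y_j$ and $\mathbb{E}[Y_j]$ lie in $\conv(S_j)$ (the mean of a measure supported on $S_j$ stays in its convex hull), so Assumption \ref{ass1}.\ref{ass1.3} applies. The standard descent-lemma estimate for a function with $\tilde L_j$-Lipschitz gradient gives, for any $y$ in the domain,
\begin{equation*}
f_j(y) \leq f_j(\bar y) + \langle \nabla f_j(\bar y), y - \bar y\rangle + \frac{\tilde L_j}{2}\|y - \bar y\|^2, \qquad \bar y := \mathbb{E}[Y_j].
\end{equation*}
Taking $y = Y_j$ and then expectations, the linear term vanishes because $\mathbb{E}[Y_j - \bar y] = 0$, leaving
\begin{equation*}
\mathbb{E}[f_j(Y_j)] - f_j(\bar y) \leq \frac{\tilde L_j}{2}\,\mathbb{E}\big[\|Y_j - \bar y\|^2\big].
\end{equation*}
The remaining task is to evaluate $\mathbb{E}\|Y_j - \bar y\|^2 = \mathrm{Var}(Y_j)$. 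Because the $X_i$ are independent, the variance of the average $\frac{1}{N}\sum_i g_{ij}(X_i)$ splits as $\frac{1}{N^2}\sum_i \sigma^2_{\mu_i}[g_{ij}]$, using the notation for the variance of $g_{ij}$ under $\mu_i$ defined in the preliminaries. Summing over $j$ yields the first inequality in \eqref{eq:gap}.

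For the second inequality I would bound each variance term crudely: since $g_{ij}(X_i)$ takes values in $S_{ij}$, which has diameter $d_{ij}$, the variance $\sigma^2_{\mu_i}[g_{ij}]$ is at most $d_{ij}^2$ (a mean-centered quantity supported in a set of diameter $d_{ij}$ has squared deviations bounded by $d_{ij}^2$). Substituting gives $\frac{1}{2N^2}\sum_j \tilde L_j \sum_i d_{ij}^2 = \frac{C_1}{2N}$ by the definition of $C_1$. Finally, to pass to $J^* - \mathcal{J}^*$: for any $\mu \in \mathcal{P}_\delta$ we have $\mathbb{E}[J(X)] \geq J^*$ (each realization $X \in \mathcal{X}$ satisfies $J(X)\geq J^*$), so $J^* \leq \mathcal{J}(\mu) + \frac{C_1}{2N}$; taking the infimum over $\mu$ gives $J^* \leq \mathcal{J}^* + \frac{C_1}{2N}$. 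The main obstacle is really just the bookkeeping of ensuring the independence is used correctly in the variance decomposition and that both $Y_j$ and its mean genuinely stay inside $\conv(S_j)$ so that the gradient-Lipschitz hypothesis is legitimately invoked; the analytic core is the routine descent lemma, so no single step should be genuinely hard.
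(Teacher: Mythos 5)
Your proof is correct and follows essentially the same route as the paper's: a second-order (descent-lemma) expansion of the smooth functions $f_j$ around $\mathbb{E}[Y_j]$, with the linear term killed by taking expectations and the quadratic term controlled via the independence-based variance decomposition $\mathrm{Var}(Y_j)=\frac{1}{N^2}\sum_i\sigma^2_{\mu_i}[g_{ij}]$; working component-by-component in $j$ rather than with $f$ globally is only a cosmetic difference, since $f$ is additive. You in fact spell out two steps the paper leaves implicit, namely the bound $\sigma^2_{\mu_i}[g_{ij}]\leq d_{ij}^2$ and the passage from \eqref{eq:gap} to $J^*-\mathcal{J}^*\leq \frac{C_1}{2N}$, both of which are handled correctly.
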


\begin{proof}
Let us define $Y_j = \frac{1}{N} \big( \sum_{i=1}^N g_{ij}(X_i) \big)$, for $j=1,\ldots,M$. Let us set $Y= (Y_j)_{j=1,\ldots,M}$. We have
\begin{equation*}
\mathbb{E} \big[ J(X) \big]
= \mathbb{E} \big[ f(Y) \big]
\qquad
\text{and}
\qquad
\mathcal{J}(\mu)
= f\big( \mathbb{E}[Y] \big).
\end{equation*}
Since the variables $X_i$ are independent, the random variables $g_{ij}(X_i)$ are also independent (for fixed $j$). It follows that
\begin{equation*}
\mathbb{E}\Big[ \big\| Y_j - \mathbb{E}\big[ Y_j \big] \big\|_{\mathcal{E}_j}^2 \Big]
= \frac{1}{N^2} \sum_{i=1}^N \mathbb{E} \Big[ \big\| g_{ij}(X_i) - \mathbb{E}[g_{ij}(X_i)] \big\|_{\mathcal{E}_j}^2 \Big]
= \frac{1}{N^2} \sum_{i=1}^N \sigma_{\mu_i}^2\big[ g_{ij} \big].
\end{equation*}
By Assumption \ref{ass1}, we have
\begin{equation*}
f(Y)
\leq f\big( \mathbb{E}[Y] \big)
+ \big\langle \nabla f(\mathbb{E}[Y]), Y- \mathbb{E}[Y] \big\rangle_{\mathcal{E}_j}
+ \frac{1}{2}\sum_{j=1}^M \Big( \tilde{L}_j \big\| Y_j - \mathbb{E}\big[ Y_j \big] \big\|_{\mathcal{E}_j}^2 \Big).
\end{equation*}
Taking the expectation of the above inequality and recalling the definition of $C_1$, we deduce \eqref{eq:gap}.
\end{proof}

\begin{remark} \label{rm:M}
{As we explained in the introduction, our analysis covers the case of a non-separable cost $f$ (when $M=1$), however, when $f$ is separable, it is useful to take this property into account. The aim of this remark is to justify this fact. Let us assume (in this remark only) that $f$ is indeed separable, i.e.\@ $M>1$. Let us treat $f$ as a non-separable function.}
It is easy to verify that the mapping $\nabla f$ is Lipschitz continuous with modulus $\big( \max_{j=1,\ldots,M} \tilde{L}_j \big)$; this estimate is tight. If we do not take into account the additive structure of $f$ in the proof of Proposition \ref{prop:gap}, we end up with the following estimate:
\begin{equation*}
\mathbb{E}\big[ J(X) \big] \leq  \mathcal{J}(\mu) + \frac{1}{2N^2} \Big( \max_{j=1,\ldots,M} \tilde{L}_j \Big)
\sum_{i=1}^N \sum_{j=1}^M \sigma_{\mu_i}^2 \big[ g_{ij} \big],
\end{equation*}
which is less precise than inequality \eqref{eq:gap}. The same kind of comment could be made for the constants appearing afterwards in the convergence results of our numerical method.
\end{remark}

We finish this subsection with an equivalent relaxed problem in the situation when the sets $\mathcal{X}_i$ (resp.\@ the contribution functions $g_i$) are identical. We refer to this situation as the \emph{symmetric case}.

\begin{lemma}\label{lm:mean-field}
Suppose that there exists a set $\mathcal{X}$ and a function $g\colon \mathcal{X}\rightarrow \mathcal{E}$ such that $\mathcal{X}_i = \mathcal{X}$ and $g_i = g$, for all $i$. Then,
\begin{equation} \label{eq:mf_relaxation}
    \mathcal{J}^{*} = \inf_{\nu\in \mathcal{P}_{\delta}(\mathcal{X})} \ f\big( E_{\nu} [g] \big).
\end{equation}
\end{lemma}

\begin{proof}
Let $\nu\in \mathcal{P}_{\delta}(\mathcal{X})$. Take $\mu = (\nu,\ldots,\nu) \in \mathcal{P}_{\delta}$. It follows that $ f\left( E_{\nu} [g] \right) = \mathcal{J}(\mu)$. As a consequence, $\inf_{\nu\in \mathcal{P}_{\delta}(\mathcal{X})} f\left( E_{\nu} [g] \right) \leq \inf_{\mu \in \mathcal{P}_{\delta}}
\mathcal{J}(\mu)$. On the other hand, let $\bar{\mu} = (\bar{\mu}_1,\ldots,\bar{\mu}_N) \in \mathcal{P}_{\delta}$. Take $\bar{\nu} = \sum_{i=1}^N \bar{\mu}_i/N \in \mathcal{P}_{\delta}(\mathcal{X})$. Then, we deduce that $ \mathcal{J}(\bar{\mu}) = f\left( E_{\bar{\nu}} [g] \right)$. The conclusion follows.
\end{proof}

The relaxed problem in \eqref{eq:mf_relaxation} has a natural interpretation as a mean field relaxation: instead of considering an optimization problem with $N$ symmetric agents, we consider an arbitrarily large number of agents and optimize their distribution $\nu$.

\color{black}

\subsection{Selection method} \label{subsec:selec_method}

Suppose that a minimizer or an approximate minimizer $\mu$ of the randomized problem \eqref{pb:aggrer} has been obtained.
We address in this subsection the issue of recovering an approximate minimizer of the original problem \eqref{pb:aggre} from $\mu$.

A naive approach would consist in \emph{averaging} the measures $\mu_i$, assuming that the sets $\mathcal{X}_i$ are convex. In such a case, one can define the point $x_i = E_{\mu_i} [ \text{Id}]$.
Another approach, motivated by Proposition \ref{prop:gap}, consists in sampling $\mu$, that is, in simulating $N$ independent random variables $(X_1,\ldots,X_N)$, with distributions $X_i \sim \mu_i$. This can be done without additional structural assumption on the sets $\mathcal{X}_i$, moreover, Proposition \ref{prop:gap} ensures that for any $\varepsilon > 0$,
\begin{equation} \label{eq:proba_sampling}
\mathbb{P} \Big[ J(X_1,\ldots, X_N) < \mathcal{J}(\mu) + \frac{C_1}{2N} + \varepsilon \Big] > 0.
\end{equation}
Of course, one can realize several samplings of $\mu$ to increase the probability of finding a good candidate for the original problem. We will refer to this approach as the \emph{selection method}.

\begin{example}
Consider the following instance of the problem \eqref{pb:aggre}, where $N$ is a large even number:
\begin{equation}\label{exe2}
\left\{\begin{array}{l}
\operatorname{minimize}\left\{	J(x_1,x_2,\ldots,x_N) = -\frac{1}{N}\sum_{i=1}^N x_i^2 + \left(\frac{1}{N}\sum_{i=1}^N x_i\right)^2\right\}; \\
\text { subject to } x_{i} \in [-1,1 ] , \quad i=1, \ldots, N.
\end{array}\right.
\end{equation}
It is easy to see that $x^{*}$ is a minimizer of \eqref{exe2} if and only if $x^{*} $ has $N/2$ coordinates equal to $1$ and the others equal to $-1$. In this example, the original and the relaxed problem have the same value, $J^*= \mathcal{J}^*= -1$.
The relaxed problem does not have a unique solution. One of them is $\tilde{\mu}_i= \frac{1}{2} \big( \delta_{-1} + \delta_{1} \big)$. Averaging $\tilde{\mu}$ as suggested above yields $\tilde{x}= (0,\ldots,0)$ and $J(\tilde{x})= 0$. Thus in this example, the averaging method yields a poor candidate, whatever the value of $N$.

On the other hand, the selection method yields good candidates when $N$ is large. Indeed, assume that $\mathbb{P}[X_i = -1] = \mathbb{P}[X_i= 1]= 1/2$. When $N$ is large, by the law of large numbers \cite{A.B.Tsybakov2009}, nearly half of the random variables $X_i$ are equal to $1$ while the others are equal to $-1$, with probability close to 1. Then in such a case $X$ is almost a minimizer of \eqref{exe2}.
\end{example}

The next theorem provides a sharp estimate of the probability in \eqref{eq:proba_sampling} and confirms the interest of the selection method for large values of $N$. It relies on a concentration inequality, \textit{McDiarmid's inequality} \cite{Mcdiarmid1989}, and its variant \cite{Delyon2015} (cf.\@ Corollary \ref{cor:md}) of ``variance type". It is quite intuitive that if the probability measures $\mu_i$ have a small variance (in a sense to be specified), then the selection method will be more efficient. The interest of taking into account the variances of the probability distributions will be revealed in the analysis of the stochastic Frank-Wolfe algorithm in Subsection \ref{subsec:sfw}.
 
\begin{theorem} \label{thm:main1}
Let Assumption \ref{ass1} be satisfied.
Let $\mu \in \mathcal{P}_{\delta}$ and let $X_1,\ldots,X_N$ be $N$ independent random variables such that $X_i \sim \mu_i$.
Let $X= (X_1,\ldots,X_N)$.
Then, for all $\epsilon > 0$,
\begin{equation}\label{eq:prob_md}
\mathbb{P}\left[ J(X) < \mathcal{J}(\mu)+ \frac{C_1}{2N}  + \epsilon \right]  \geq 1- \exp \left( -\frac{2N\epsilon^{2}}{C_0^2}\right).
\end{equation}
Assume further that for all $i=1,\ldots,N$, there exists a constant $v_i$ such that
\begin{equation} \label{eq:v_i}
\sigma^2_{\mu_i} \big[ J(\cdot,x_{-i}) \big] \leq v_{i}^2,
\end{equation}
for all $x_{-i} \in {X}_{-i}$.
Then \eqref{eq:prob_md} can be strengthened as:
\begin{equation}\label{eq:prob_md1}
\mathbb{P}\left [ J(X) < \mathcal{J}(\mu)+ \sum_{j=1}^{M} \sum_{i=1}^{N} \frac{\tilde{L}_j}{2N^2} \sigma^2_{{\mu_i}} \big[ g_{ij} \big]  + \epsilon \right] \geq 1- \exp \left( -\frac{N\epsilon^{2}}{2\left(\sum_{i=1}^N N v^2_i + \frac{ C_0 \epsilon}{3}\right)}\right) .
\end{equation}
\end{theorem}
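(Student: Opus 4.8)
The plan is to reduce both inequalities to one–sided concentration bounds for $Z=J(X)$ around its mean $\mathbb{E}[J(X)]$, and then to replace $\mathbb{E}[J(X)]$ by the explicit upper bounds already established in Proposition \ref{prop:gap}. The structural input that makes this work is that $J$, viewed as a function of the independent variables $X_1,\ldots,X_N$, has bounded differences: by Lemma \ref{lemma:easy_ineq}, for every $i$ and every $x_{-i}$ the map $x_i \mapsto J(x_i,x_{-i})$ oscillates by at most $C_0/N$, so $J$ satisfies the bounded–differences condition with constants $c_i = C_0/N$.

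For the first inequality \eqref{eq:prob_md} I would apply McDiarmid's inequality directly to $Z=J(X)$ with $t=\epsilon$. Since $\sum_{i=1}^N c_i^2 = N (C_0/N)^2 = C_0^2/N$, this yields $\mathbb{P}[J(X)-\mathbb{E}[J(X)] \geq \epsilon] \leq \exp(-2\epsilon^2/(C_0^2/N)) = \exp(-2N\epsilon^2/C_0^2)$. Passing to the complement gives $\mathbb{P}[J(X) < \mathbb{E}[J(X)]+\epsilon] \geq 1-\exp(-2N\epsilon^2/C_0^2)$. Finally Proposition \ref{prop:gap} provides $\mathbb{E}[J(X)] \leq \mathcal{J}(\mu) + C_1/(2N)$, so the event $\{J(X) < \mathbb{E}[J(X)]+\epsilon\}$ is contained in $\{J(X) < \mathcal{J}(\mu)+C_1/(2N)+\epsilon\}$, and monotonicity of probability delivers \eqref{eq:prob_md}.

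For the refined inequality \eqref{eq:prob_md1} I would run the same scheme but feed the variance–type variant of Corollary \ref{cor:md} in place of plain McDiarmid, and use the sharper mean bound from \eqref{eq:gap}. The two quantities controlling the Bernstein–type estimate are the uniform bounded–difference constant $c = \max_i c_i = C_0/N$ and the sum of conditional variances $\sum_{i=1}^N \sup_{x_{-i}} \sigma^2_{\mu_i}[J(\cdot,x_{-i})] \leq \sum_{i=1}^N v_i^2$, where I use the identity $\sigma^2_{\mu_i}[J(\cdot,x_{-i})] = \mathrm{Var}_{X_i\sim\mu_i}[J(X_i,x_{-i})]$ together with hypothesis \eqref{eq:v_i}. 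Corollary \ref{cor:md} then produces a tail of the form $\exp(-\epsilon^2/(2(\sum_i v_i^2 + c\epsilon/3)))$; multiplying numerator and denominator by $N$ and substituting $c=C_0/N$ turns this into $\exp(-N\epsilon^2/(2(\sum_i N v_i^2 + C_0\epsilon/3)))$, exactly the exponent in \eqref{eq:prob_md1}. Combining with $\mathbb{E}[J(X)] \leq \mathcal{J}(\mu) + \sum_{j=1}^M\sum_{i=1}^N \frac{\tilde{L}_j}{2N^2}\sigma^2_{\mu_i}[g_{ij}]$ from \eqref{eq:gap} and taking complements finishes the argument.

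The routine parts are the algebraic rearrangement matching the stated exponents and the reduction to $\mathbb{E}[J(X)]$ via Proposition \ref{prop:gap}. The main obstacle is correctly feeding the variance–type inequality: I must ensure that the conditional–variance quantity controlled by Corollary \ref{cor:md} is indeed $\sup_{x_{-i}} \sigma^2_{\mu_i}[J(\cdot,x_{-i})]$. If Corollary \ref{cor:md} is phrased through the conditional variances of the Doob martingale differences $D_i = \mathbb{E}[J(X)\mid X_1,\ldots,X_i] - \mathbb{E}[J(X)\mid X_1,\ldots,X_{i-1}]$ rather than through the coordinatewise variances directly, then I would invoke the standard fact that conditioning reduces variance to bound $\mathbb{E}[D_i^2\mid X_1,\ldots,X_{i-1}]$ by $\sup_{x_{-i}}\sigma^2_{\mu_i}[J(\cdot,x_{-i})] \leq v_i^2$; this is the only genuinely non–mechanical step, and it is where \eqref{eq:v_i} enters.
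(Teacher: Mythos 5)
Your proposal is correct and follows essentially the same route as the paper: McDiarmid's inequality with bounded differences $C_0/N$ (from Lemma \ref{lemma:easy_ineq}) combined with the second inequality of Proposition \ref{prop:gap} for \eqref{eq:prob_md}, and Corollary \ref{cor:md} combined with the first inequality of Proposition \ref{prop:gap} for \eqref{eq:prob_md1}. Your closing worry about the phrasing of the variance-type bound is moot, since Corollary \ref{cor:md} is stated directly in terms of the coordinatewise variances $\mathrm{Var}\big[ f(X_i,x_{-i})\big] \leq v_i^2$, which is exactly hypothesis \eqref{eq:v_i}.
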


\begin{proof}Combining Lemma \ref{lemma:easy_ineq} and McDiarmid's inequality \cite{Mcdiarmid1989}, we obtain
\begin{equation*}
\mathbb{P}\Big[ J(X) < \mathbb{E} \big[ J(X) \big]  + \epsilon \Big]  \geq 1- \exp \left( -\frac{2N\epsilon^{2}}{C_0^2}\right).
\end{equation*}
Combining this estimate with the second inequality of Proposition \ref{prop:gap}, we obtain \eqref{eq:prob_md}.

Estimate \eqref{eq:prob_md1} is proved similarly, combining McDiarmid's inequality of ``variance type" proved in Corollary \ref{cor:md} and the first inequality of Proposition \ref{prop:gap}.
\end{proof}

We provide in the next lemma an explicit candidate for \eqref{eq:v_i}.

\begin{lemma}
Inequality \eqref{eq:v_i} is satisfied with
$v_i^2 = \frac{{1}}{N^2} \big( \sum_{j=1}^M L_j^2 \big) \sigma_{\mu_i}^2(g_{i})$.
\end{lemma}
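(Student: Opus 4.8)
The plan is to fix the index $i$ and the block $x_{-i} \in \mathcal{X}_{-i}$, write $\phi(\cdot) \coloneqq J(\cdot,x_{-i})$ for the resulting real-valued function on $\mathcal{X}_i$, and control the variance $\sigma^2_{\mu_i}[\phi]$ directly through a pointwise Lipschitz estimate on the increments of $\phi$. Because only the $i$-th summand of each aggregate depends on $x_i$, I would set $A_j \coloneqq \frac{1}{N}\sum_{i'\neq i} g_{i'j}(x_{i'})$ (a constant, given $x_{-i}$) and use the explicit form \eqref{eq:explicit_J} to write $\phi(x_i) = \sum_{j=1}^M f_j\big(A_j + \frac1N g_{ij}(x_i)\big)$.

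First I would establish that for any $x_i,x_i' \in \mathcal{X}_i$,
\begin{equation*}
|\phi(x_i) - \phi(x_i')| \leq \frac1N \sum_{j=1}^M L_j \|g_{ij}(x_i) - g_{ij}(x_i')\| \leq \frac1N \Big(\sum_{j=1}^M L_j^2\Big)^{1/2}\|g_i(x_i) - g_i(x_i')\|.
\end{equation*}
The first inequality uses Assumption \ref{ass1}.\ref{ass1.2}: the two arguments $A_j + \frac1N g_{ij}(x_i)$ and $A_j + \frac1N g_{ij}(x_i')$ are each of the form $\frac1N\sum_{i'} g_{i'j}(\tilde x_{i'})$ with $\tilde x_{i'} \in \mathcal{X}_{i'}$, hence both lie in $S_j \subseteq \conv(S_j)$, where $f_j$ is $L_j$-Lipschitz. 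The second inequality is Cauchy–Schwarz together with $\|g_i(x_i)-g_i(x_i')\|^2 = \sum_{j=1}^M \|g_{ij}(x_i)-g_{ij}(x_i')\|^2$, which holds since $\mathcal{E} = \prod_j \mathcal{E}_j$. Checking this convex-hull membership is the only delicate point of the argument; everything else is routine.

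Finally I would convert the increment bound into a variance bound by introducing two independent copies $X_i, X_i' \sim \mu_i$. From $\phi(X_i) - E_{\mu_i}[\phi] = \mathbb{E}\big[\phi(X_i)-\phi(X_i')\mid X_i\big]$ and Jensen's inequality, taking expectations gives $\sigma^2_{\mu_i}[\phi] \leq \mathbb{E}\big[(\phi(X_i)-\phi(X_i'))^2\big]$. Plugging in the increment bound and using the identity $\mathbb{E}\big[\|g_i(X_i)-g_i(X_i')\|^2\big] = 2\sigma^2_{\mu_i}(g_i)$ yields $\sigma^2_{\mu_i}[\phi] \leq \frac{2}{N^2}\big(\sum_{j=1}^M L_j^2\big)\sigma^2_{\mu_i}(g_i) = v_i^2$, which is the claim. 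I would remark that the factor $2$ is exactly the slack introduced by the Jensen step; the sharper identity $\sigma^2_{\mu_i}[\phi] = \frac12 \mathbb{E}\big[(\phi(X_i)-\phi(X_i'))^2\big]$ would in fact halve the constant, so the stated $v_i^2$ is a deliberately simple valid choice rather than the tightest possible one.
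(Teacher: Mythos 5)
Your proof is correct and follows essentially the same route as the paper: the paper writes $J(\cdot,x_{-i})=f\big(\tfrac{1}{N} g_i(\cdot)+C\big)$, uses that $f$ is $L$-Lipschitz with $L=\big(\sum_{j=1}^M L_j^2\big)^{1/2}$, and invokes the variance-contraction property $\sigma^2_\mu[h_1\circ h_2]\le 2L^2\sigma^2_\mu[h_2]$ (whose proof it leaves to the reader), and your componentwise Lipschitz estimate with Cauchy--Schwarz together with the two-independent-copies/Jensen argument is precisely a proof of those two ingredients. Your closing remark is also consistent with the paper: the factor $2$ there arises from the same symmetrization slack, so the stated $v_i^2$ is indeed a valid but non-tight choice.
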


\begin{proof}
We first state a general following property: given a probability measure $\mu$ and two maps $h_1$ and $h_2$ suitably defined, we have the inequality
{
\begin{equation} \label{eq:inequality}
\sigma_{\mu}^2 \big[ h_1 \circ h_2 \big] \leq  L^2 \sigma_{\mu}^2 \big[ h_2 \big],
\end{equation}
}
assuming that $h_1$ is $L$-Lipschitz continuous.
{
Let us prove this property.
For any $x$, we have
\begin{align*}
& \big\| h_1 \circ h_2 (x) - E_{\mu} [h_1 \circ h_2] \big\|^2
=
\big\| h_1 \circ h_2 (x) - h_1(E_{\mu} [ h_2] ) \big\|^2 \\
& \qquad \qquad +
2 \big\langle h_1 \circ h_2 (x) - h_1(E_{\mu} [ h_2]), h_1(E_{\mu} [ h_2] ) - E_{\mu} [h_1 \circ h_2] \big\rangle \\
& \qquad \qquad + \big\| h_1(E_{\mu} [ h_2] ) - E_{\mu} [h_1 \circ h_2] \big\|^2.
\end{align*}
Taking the expectation, we obtain that
\begin{equation*}
\sigma_{\mu}^2 [h_1 \circ h_2 ]
=  E_\mu \Big[ \big\| h_1 \circ h_2 - h_1(E_{\mu} [ h_2] ) \big\|^2 \Big]
- \big\| h_1(E_{\mu} [ h_2] ) - E_{\mu} [h_1 \circ h_2] \big\|^2.
\end{equation*}
Since $h_1$ is $L$-Lipschitz continuous, we have $E_\mu \big[ \| h_1 \circ h_2 - h_1(E_{\mu} [ h_2] ) \|^2 ] \leq L^2 \sigma_\mu [h_2]^2$. Inequality \eqref{eq:inequality} follows immediately.
}
Next, it is easy to verify that the function $f$ is $L$-Lipschitz continuous, with $L= \big( \sum_{j=1}^M L_j^2 \big)^{1/2}$.
  Using \eqref{eq:inequality}, we conclude that
  {
  \begin{equation*}
  \sigma_{\mu_i}^2 \big[ J(\cdot, x_{-i}) \big] 
  \leq L^2 \sigma_{\mu_i}^2 \Big[ \frac{1}{N} g_i(\cdot) + C \Big]
  = \frac{L^2}{N^2} \sigma_{\mu_i}^2 \big[ g_i \big],
  \end{equation*}
 }
where $C= \frac{1}{N} \sum_{i' \neq i} g_{i'}(x_{i'})$ is regarded as a constant. The estimate follows.
\end{proof}

\section{Stochastic Frank-Wolfe algorithm}\label{sec:BFW}

%

\subsection{Assumptions}

We introduce two new assumptions, which will be in force until the end of the article.

\begin{assumption}\label{ass2}
{For all $j= 1,\ldots,M$, the function $f_j \colon \mathcal{E}_j\rightarrow \mathbb{R}$ is convex over $\conv(S_j)$.}
\end{assumption}

Let $\mu^1$ and $\mu^2$ lie in $\mathcal{P}_{\delta}$. Take $\omega \in [0,1]$. Let $\mu= (\mu_1,\ldots,\mu_N)$ be defined, for any $i=1,\ldots,N$, by $\mu_i= (1-\omega) \mu_i^1 + \omega \mu_i^2$. Here, the addition and the multiplication by a scalar are understood as usual in the set of signed measures. In the sequel, we simply denote $\mu= (1-\omega) \mu^1 + \omega \mu^2$. We have $\mu \in \mathcal{P}_{\delta}$; moreover,
$E_{\mu_i} [g_i]
= (1-\omega) E_{\mu_i^1} [g_i]
+ \omega E_{\mu_i^2}[g_i]$, for any $i=1,\ldots,N$.
Then, Assumption \ref{ass2} implies that $\mathcal{J}(\mu) \leq (1-\omega) \mathcal{J}(\mu^1) + \omega \mathcal{J}(\mu^2)$. In words, the randomized problem \eqref{pb:aggrer} is convex.

In this section, we address the numerical resolution of the randomized problem (and the original problem) under Assumption \ref{ass2}.
Let us mention that this convexity assumption is natural for the application problems described in the introduction.
It allows the application of the Frank-Wolfe algorithm (also called conditional gradient algorithm) \cite{Dunn1978}, for which convergence can be established.
The Frank-Wolfe algorithm requires to solve at each iteration a subproblem. Here, the subproblems can be decomposed in $N$ optimization problems, which can be solved in parallel. This property is particularly interesting, since we aim at solving instances of \eqref{pb:aggre} with large values of $N$.
We do not detail here the practical resolution of the subproblems, which can only be investigated case by case. Instead, we make the following assumption.
{
Let us set $\mathcal{A} \coloneqq \left\{  \nabla f (y) \, \mid\, y\in  \conv(G(\mathcal{X}))  \right\} \subset \mathcal{E}$.
\begin{assumption} \label{ass3}
For all $i=1,\ldots,N$, for all $\lambda \in \mathcal{A}$, the problem
\begin{equation} \label{eq:sub_pb_i}
\inf_{x_i \in \mathcal{X}_i} \, \langle \lambda, g_i(x_i) \rangle
\end{equation}
has at least a solution.
For all $i=1,\ldots,N$, we fix a map $\mathbb{S}_i \colon \mathcal{A} \mapsto \mathcal{X}_i$ such that for any $\lambda \in \mathcal{A}$, $\mathbb{S}_i(\lambda)$ is a solution to \eqref{eq:sub_pb_i}.
\end{assumption}
}

The map $\mathbb{S}_i$ can be understood as a best-response function corresponding to agent $i$. The involved cost function is a linear combination of the contribution mappings $g_{ij}$, with $j=1,\ldots,M$. {In problem \eqref{eq:sub_pb_i}, $\lambda$ can be interpreted as a price variable associated with $g_i(x_i)$.}

\begin{remark}
It is easy to find assumptions which ensure the existence of the map $\mathbb{S}_i$. For example, one can assume that $\mathcal{X}_i$ is a compact set in a topological vector space and that $g_i$ is continuous.
Let us emphasize that Assumption \ref{ass3} is essentially an assumption of numerical nature:
$\mathbb{S}_i$ should be understood as the output of an (efficient) numerical procedure for the resolution of \eqref{eq:sub_pb_i}. The algorithms described afterwards largely rely on evaluations of $\mathbb{S}_i$.
\end{remark}

\subsection{Basic Frank-Wolfe algorithm}

We first describe a rather direct application of the Frank-Wolfe algorithm, which is referred to as the basic Frank-Wolfe algorithm.
The starting point of our numerical approach is the following lemma, the proof of which is straightforward.

\begin{lemma} \label{lm:subpb}
Let {$\lambda \in \mathcal{A}$} and let $\bar{\mu}=(\bar{\mu}_1,\ldots, \bar{\mu}_N) \in \mathcal{P}_{\delta}$. Then,
$\bar{\mu}$ is a solution to
\begin{equation} \label{eq:pb_measure}
\inf_{\mu \in \mathcal{P}_{\delta}} \, \Big\langle {\lambda}, \frac{1}{N} \sum_{i=1}^N E_{\mu_i}(g_i) \Big\rangle.
\end{equation}
if and only if for all $i=1,\ldots,N$, $\bar{\mu}_i$ is supported in $\argmin_{x_i \in \mathcal{X}_i} \, \langle {\lambda}, g_i(x_i) \rangle$.
\end{lemma}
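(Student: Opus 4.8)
The plan is to exploit the fact that the objective in \eqref{eq:pb_measure} is linear and fully separable across the agents $i$, so that the problem decouples into $N$ independent scalar minimizations. First I would write, using the definition of $E_{\mu_i}$ and the linearity of $\langle \nabla f(y), \cdot \rangle$,
\[
\Big\langle \nabla f(y), \frac{1}{N} \sum_{i=1}^N E_{\mu_i}(g_i) \Big\rangle
= \frac{1}{N} \sum_{i=1}^N E_{\mu_i}\big[ c_i \big],
\qquad \text{where } c_i(x_i) \coloneqq \langle \nabla f(y), g_i(x_i) \rangle.
\]
Since $\mathcal{P}_{\delta} = \prod_{i=1}^N \mathcal{P}_{\delta}(\mathcal{X}_i)$, the measures $\mu_i$ can be chosen independently, so the infimum in \eqref{eq:pb_measure} equals $\frac{1}{N}\sum_{i=1}^N \inf_{\mu_i \in \mathcal{P}_{\delta}(\mathcal{X}_i)} E_{\mu_i}[c_i]$, and $\bar{\mu}$ is optimal if and only if each $\bar{\mu}_i$ minimizes $E_{\mu_i}[c_i]$.

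Second, for a fixed $i$ I would reduce the measure problem to the point problem. Writing $m_i \coloneqq \inf_{x_i \in \mathcal{X}_i} c_i(x_i)$, for any $\mu_i = \sum_j \lambda_j \delta_{x_j}$ we have $E_{\mu_i}[c_i] = \sum_j \lambda_j c_i(x_j) \geq m_i$, because each $c_i(x_j) \geq m_i$ and $\sum_j \lambda_j = 1$; conversely, taking $\mu_i = \delta_{x_i}$ shows $\inf_{\mu_i} E_{\mu_i}[c_i] \leq c_i(x_i)$ for every $x_i$, hence $\leq m_i$. Therefore the two infima coincide, and by Assumption \ref{ass3} the set $\argmin_{x_i \in \mathcal{X}_i} c_i$ is nonempty, so $m_i$ is attained.

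Third, I would characterize the optimal measures through the equality case. Since $c_i(x_j) - m_i \geq 0$ for all $j$ in the support and $\lambda_j > 0$ there, the identity $E_{\mu_i}[c_i] = m_i$, i.e. $\sum_j \lambda_j \big( c_i(x_j) - m_i \big) = 0$, holds if and only if $c_i(x_j) = m_i$ for every $j$ with $\lambda_j > 0$; equivalently, the support of $\bar{\mu}_i$ is contained in $\argmin_{x_i \in \mathcal{X}_i} c_i$. Combining this with the separation established in the first step yields the stated equivalence for the whole vector $\bar{\mu}$.

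I do not expect any real obstacle, which is consistent with the lemma being flagged as straightforward. The only two points requiring a word of care are the interchange of the infimum over the product measure space with the product of the per-agent infima (immediate from the separable, linear structure of the objective and from $\mathcal{P}_{\delta} = \prod_i \mathcal{P}_{\delta}(\mathcal{X}_i)$) and the invocation of Assumption \ref{ass3}, which guarantees that each $\argmin$ is nonempty so that the condition ``$\bar{\mu}_i$ is supported in $\argmin_{x_i} c_i$'' is both meaningful and achievable.
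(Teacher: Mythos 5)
Your proof is correct, and it is precisely the straightforward argument the paper has in mind when it states the lemma without proof: linearity of $\langle \nabla f(y),\cdot\rangle$ together with the product structure $\mathcal{P}_{\delta}=\prod_i \mathcal{P}_{\delta}(\mathcal{X}_i)$ decouples the problem, and the equality case of a convex combination of nonnegative terms characterizes the optimal supports. The two points you flag (the infimum--product interchange and the role of Assumption~\ref{ass3} in making the argmin nonempty) are exactly the right ones, and your handling of them is sound.
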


The cost function in \eqref{eq:pb_measure} should be regarded as a linearization of $\mathcal{J}$, as needed in the abstract formulation of the Frank-Wolfe algorithm in \cite{Dunn1978}.
An immediate consequence of Lemma \ref{lm:subpb} is that $(\delta_{\mathbb{S}_1({\lambda})},\ldots,\delta_{\mathbb{S}_N({\lambda})})$ is a solution to \eqref{eq:pb_measure}.
The resolution of problem \eqref{eq:pb_measure} is a key step in the numerical procedures developed afterwards; let us emphasize that the maps $\mathbb{S}_i(y)$ can be evaluated independently from each other, i.e.\@ the resolution of \eqref{eq:pb_measure} can be parallelized.


\begin{algorithm}[H]
\caption{Frank-Wolfe Algorithm }
\label{alg1}
\begin{algorithmic}
\STATE{Initialization: $\mu^0 \in \mathcal{P}_\delta$.}
\FOR{$k= 0,1,\ldots{,K}$}
\bindent
\STATE{\textbf{Step 1: Resolution of the subproblems.}}
\STATE{Set $y^k= \frac{1}{N} \sum_{i=1}^N E_{\mu^k_i}\big[ g_i \big]$ and {set $\lambda^k= \nabla f(y^k)$.}}
\FOR{$i=1,\ldots,N$}
\bindent
\STATE{Compute $\bar{x}_i^k= \mathbb{S}_i({\lambda^k})$.}
\eindent
\ENDFOR
\STATE{Set $\bar{\mu}^k= (\delta_{\bar{x}_1^k},\ldots,\delta_{\bar{x}_N^k})$.}
\STATE{\textbf{Step 2: Update.}}
\STATE{
{Set $\omega_k = 2/(k+2)$.}}
\STATE{Set $\mu^{k+1} = (1-\omega_k) \mu^k + \omega_k \bar{\mu}^k$.}
\eindent
\ENDFOR
\end{algorithmic}
\end{algorithm}

The convergence analysis performed afterwards relies on standard arguments (compare our proof with \cite{Jaggi2013}). We introduce the primal gap $\gamma_k$ and the primal-dual gap $\beta_k$, defined by
\begin{equation} \label{eq:def_gamma_beta}
\gamma_k=  \mathcal{J}(\mu^k) - \mathcal{J}^{*},
\quad
\beta_k = \langle \nabla f(y^k), y^k - \bar{y}^k \rangle,
\quad
\text{where:} \quad
\bar{y}^k = \frac{1}{N} \sum_{i=1}^N g_i(\bar{x}_i^k).
\end{equation}
Note that $\beta_k$ can be evaluated numerically. The following lemma shows that $\beta_k$ is an upper bound of the primal gap $\gamma_k$.

\begin{lemma} \label{lem:upper_bound}
For all $k \in \mathbb{N}$, $\gamma_k \leq \beta_k$.
\end{lemma}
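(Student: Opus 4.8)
The plan is to exploit the convexity of the relaxed objective $\mathcal{J}$ (guaranteed by Assumption \ref{ass2}) through the gradient inequality, and then to recognize that the resulting linear term is exactly the subproblem solved by $\bar{\mu}^k$ in Lemma \ref{lm:subpb}. Concretely, I would write $\mathcal{J}(\mu) = f(G_\mu)$ with $G_\mu = \frac{1}{N}\sum_{i=1}^N E_{\mu_i}[g_i]$, observe that $y^k = G_{\mu^k}$, and note that all the aggregates $G_\mu$ lie in $\prod_{j=1}^M \conv(S_j)$, where $f$ is differentiable and convex. This places us exactly in the regime where the first-order convexity inequality applies.

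First I would fix an arbitrary $\mu \in \mathcal{P}_\delta$ and apply the gradient inequality for the convex differentiable function $f$ at the point $y^k$:
\begin{equation*}
\mathcal{J}(\mu) = f(G_\mu) \geq f(y^k) + \langle \nabla f(y^k),\, G_\mu - y^k\rangle = \mathcal{J}(\mu^k) + \langle \nabla f(y^k),\, G_\mu - y^k\rangle.
\end{equation*}
Next I would take the infimum over $\mu \in \mathcal{P}_\delta$. Since $\langle \nabla f(y^k),\, G_\mu\rangle = \big\langle \nabla f(y^k),\, \frac{1}{N}\sum_{i=1}^N E_{\mu_i}[g_i]\big\rangle$ is precisely the objective of the linearized problem \eqref{eq:pb_measure}, Lemma \ref{lm:subpb} tells us this infimum is attained at $\bar{\mu}^k = (\delta_{\bar{x}_1^k},\ldots,\delta_{\bar{x}_N^k})$, with value $\langle \nabla f(y^k),\, \bar{y}^k\rangle$. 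Combining these facts yields
\begin{equation*}
\mathcal{J}^{*} = \inf_{\mu \in \mathcal{P}_\delta} \mathcal{J}(\mu) \geq \mathcal{J}(\mu^k) + \langle \nabla f(y^k),\, \bar{y}^k - y^k\rangle = \mathcal{J}(\mu^k) - \beta_k,
\end{equation*}
and rearranging gives $\gamma_k = \mathcal{J}(\mu^k) - \mathcal{J}^{*} \leq \beta_k$, as desired.

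The one point requiring a little care—though it is not a genuine obstacle—is the manipulation of the infimum: rather than invoking a (possibly nonexistent) optimal measure $\mu^*$, I would keep the infimum explicit throughout and pass it through the additive constant $\mathcal{J}(\mu^k)$ and the fixed linear form $\langle \nabla f(y^k), \cdot\rangle$, only then applying Lemma \ref{lm:subpb} to the minimized term. This sidesteps the issue, noted earlier in the paper, that neither \eqref{pb:aggre} nor \eqref{pb:aggrer} need admit a minimizer. The only other thing to verify is that $\nabla f(y^k)$ is well defined and that the gradient inequality is legitimate, which follows from Assumptions \ref{ass1}.\ref{ass1.3} and \ref{ass2} since every $G_\mu$ and $y^k$ belongs to the convex hull of the range on which $f$ is convex and continuously differentiable. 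This confirms the paper's remark that the proof is straightforward.
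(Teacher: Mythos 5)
Your proof is correct and follows essentially the same route as the paper's: both arguments combine the first-order convexity inequality for $f$ at $y^k$ with Lemma \ref{lm:subpb}'s identification of $\bar{\mu}^k$ (equivalently $\bar{y}^k$) as a minimizer of the linearized problem, then pass to the infimum over arbitrary $\mu \in \mathcal{P}_\delta$. The only difference is cosmetic ordering—the paper bounds $\beta_k \geq \mathcal{J}(\mu^k) - \mathcal{J}(\mu)$ for each $\mu$ and then takes the supremum, while you take the infimum of the gradient inequality first—and your care with the possibly unattained infimum matches the paper's handling.
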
 
 
\begin{proof}
Let $k \in \mathbb{N}$. Let $\mu \in \mathcal{P}_\delta$ and let $y= \frac{1}{N} \sum_{i=1}^N E_{\mu_i} [g_i]$. By Lemma \ref{lm:subpb}, we have
$\langle \nabla f(y^k), \bar{y}^k \rangle
\leq \langle \nabla f(y^k), y \rangle$. Thus, using the convexity of $f$, we obtain
\begin{equation}
\beta_k
= \langle \nabla f(y^k), y^k - \bar{y}^k \rangle
\geq \langle \nabla f(y^k), y^k - y \rangle
\geq f(y^k) - f(y)
= \mathcal{J}(\mu^k) - \mathcal{J}(\mu).
\end{equation}
Since $\mu$ is arbitrary, we deduce that $\beta_k \geq \mathcal{J}(\mu^k) - \mathcal{J}^* = \gamma_k$.
\end{proof}

We have the following convergence result.

\begin{proposition} \label{prop:convergence1}
{Let Assumptions \ref{ass1}, \ref{ass2}, and \ref{ass3} hold. Then, in Algorithm \ref{alg1}, for any $K \in \mathbb{N}^*$,
\begin{equation*}
\gamma_K \leq \frac{2C_1}{K}.
\end{equation*}}
\end{proposition}

\begin{proof}
As we will see, the result is a consequence of Lemma \ref{lm: convergence}, with $C= \frac{C_1}{2}$ and $u_k= 0$.
By Assumption \ref{ass1},
\begin{equation*}
f(y^{k+1})
\leq f(y^k)
+ \langle \nabla f(y^k), y^{k+1}- y^k \rangle
+ \sum_{j=1}^M \frac{\tilde{L}_j}{2} \| y_j^{k+1} - y_j^k \|^2.
\end{equation*}
We have $y^{k+1} - y^k= \omega_k (\bar{y}^{k}- y^k )$.
Therefore, by definition of $\beta_k$,
\begin{equation} \label{eq:estim_proof_fw}
f(y^{k+1})
\leq
f(y^k) - \omega_k \beta_k + \omega_k^2
\sum_{j=1}^M \frac{\tilde{L}_j}{2} \| \bar{y}_j^{k}- y_j^k \|^2.
\end{equation}
By definition, $\| \bar{y}_j^{k}- y_j^k \|^2
= \frac{1}{N^2} \big\| \sum_{i=1}^N E_{\mu^k_i} \big[ g_{ij}(\bar{x}_i^k)- g_{ij}(\cdot) \big] \big\|^2$,
thus by Cauchy-Schwarz inequality,
\begin{equation*}
\| \bar{y}_j^{k}- y_j^k \|^2
\leq \frac{1}{N} \sum_{i=1}^N \big\| E_{\mu^k_i} \big[ g_{ij}(\bar{x}_i^k)- g_{ij}(\cdot) \big] \big\|^2
\leq \frac{1}{N} \sum_{i=1}^N d_{ij}^2.
\end{equation*}
Combining the above estimate with \eqref{eq:estim_proof_fw} and using the inequality $\gamma_k \leq \beta_k$ proved in Lemma \ref{lem:upper_bound}, we obtain that
$\gamma_{k+1} \leq (1-\omega_k) \gamma_k + \frac{C_1}{2} \omega_k^2$. Thus Lemma \ref{lm: convergence} applies, which concludes the proof.
\end{proof}

{In the following remark, we give an alternative value of $\omega_k$ in Step 2 of Algorithm \ref{alg1}, while preserving the convergence rate from the previous proposition.}
\begin{remark}\label{rem:ls}
For any $k \in \mathbb{N}$, denote $h_k(\omega)= - \omega \beta_k + \frac{C_k}{2} \omega^2$, where the constant $C_k$ is defined by $C_k = \sum_{j=1}^M \tilde{L}_j \| \bar{y}_j^{k}- y_j^k \|^2$.
In view of inequality \eqref{eq:estim_proof_fw}, the result of Proposition \ref{prop:convergence1} remains true if the sequence $(\omega_k)_{k \in \mathbb{N}}$ is chosen such that for any $k \in \mathbb{N}$, $h(\omega_k) \leq h(\bar{\omega}_k)$. The result remains in particular true for
\begin{equation} \label{eq:ls1}
\omega_k
=
\underset{\omega \in [0,1]}{\text{argmin}} \
h(\omega)
= \min \Big( \frac{\beta_k}{C_k}, 1\Big).
\end{equation}
\end{remark}

The above proposition shows the convergence of the Frank-Wolfe algorithm. Yet the algorithm only provides a relaxed solution. In order to get a solution to the original problem, one can use the selection method introduced in Subsection \ref{subsec:selec_method}. A first direct application of Proposition \ref{prop:gap} yields the following.
Let $(X_1,\ldots,X_N)$ be $N$ independent random variables such that $X_i \sim \mu_i^k$, for all $i$. Then,
\begin{equation*}
\mathbb{E} \big[ J(X) \big] \leq J^{*} + \frac{2C_1}{k} + \frac{C_1}{2N}.
\end{equation*}
Therefore, from a theoretical point of view, there is no guaranty of improvements when $k \gg N$ since, then, the error term $\frac{2C_1}{k}$ becomes negligible in comparison with $\frac{C_1}{2N}$.
The following lemma provides a convergence result (in probability) for the combination of the Frank-Wolfe algorithm and the selection method, for a number of iterations $k \leq N$.

\begin{lemma}
Let $(\mu_k)_{k \in \mathbb{N}}$ be the output of Algorithm \ref{alg1}.
Let $k \leq N$. Let $\zeta \in (0,1)$. Let $n \in \mathbb{N}^*$ and let $(X_i^j)_{i=1,\ldots,N}^{j=1,\ldots,n}$ be $Nn$ independent random variables such that $X_i^j \sim \mu_i^k$. Let $X^j= (X_1^j,\ldots,X_N^j)$. Then,
\begin{equation} \label{eq:n_for_proba}
\mathbb{P} \left[ \min_{j=1,\ldots,n} J(X^j)
< \mathcal{J}^{*} + \frac{3C_1}{k} \right] \geq 1- \zeta,
\quad \text{if} \quad
n \geq \frac{2C_0^2}{C_1^2} \, \frac{k^2}{N} \, \ln \Big( \frac{1}{\zeta} \Big).
\end{equation}
\end{lemma}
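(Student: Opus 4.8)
The plan is to combine the deterministic Frank-Wolfe convergence rate with the single-sample concentration bound of Theorem \ref{thm:main1}, and then to amplify the success probability by minimizing over the $n$ independent draws. There is no genuinely hard analytic step here: both ingredients are already available, so the work is essentially bookkeeping with the right choice of the free parameter.

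First I would recall from Proposition \ref{prop:convergence1} that, under the step-size choice $\omega_k = \bar\omega_k$, the iterate $\mu^k$ satisfies $\mathcal{J}(\mu^k) \leq \mathcal{J}^{*} + \frac{2C_1}{k}$. Then, for each fixed $j$, I would apply the estimate \eqref{eq:prob_md} of Theorem \ref{thm:main1} with $\mu = \mu^k$ and $X = X^j$ (legitimate since $X^j_i \sim \mu_i^k$): for every $\epsilon > 0$,
\[
\mathbb{P}\Big[\, J(X^j) < \mathcal{J}(\mu^k) + \tfrac{C_1}{2N} + \epsilon \,\Big] \;\geq\; 1 - \exp\Big(-\tfrac{2N\epsilon^2}{C_0^2}\Big).
\]
The key step is to choose $\epsilon$ so that the threshold inside the probability is dominated by $\mathcal{J}^{*} + \frac{3C_1}{k}$. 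Using $\mathcal{J}(\mu^k) \leq \mathcal{J}^{*} + \frac{2C_1}{k}$ together with the hypothesis $k \leq N$ (so that $\frac{C_1}{2N} \leq \frac{C_1}{2k}$), the choice $\epsilon = \frac{C_1}{2k}$ gives
\[
\mathcal{J}(\mu^k) + \tfrac{C_1}{2N} + \epsilon \;\leq\; \mathcal{J}^{*} + \tfrac{2C_1}{k} + \tfrac{C_1}{2k} + \tfrac{C_1}{2k} \;=\; \mathcal{J}^{*} + \tfrac{3C_1}{k}.
\]
Since enlarging the threshold only increases the probability of falling below it, this yields, for each $j$,
\[
\mathbb{P}\Big[\, J(X^j) \geq \mathcal{J}^{*} + \tfrac{3C_1}{k} \,\Big] \;\leq\; \exp\Big(-\tfrac{N C_1^2}{2 C_0^2 k^2}\Big) \;=:\; p.
\]

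Finally I would exploit the independence of the samples across $j$. Because the $Nn$ variables $X_i^j$ are mutually independent, the vectors $X^1,\ldots,X^n$ are independent, so the event $\{\min_{j} J(X^j) \geq \mathcal{J}^{*} + \frac{3C_1}{k}\}$ is the intersection of the $n$ independent failure events, whose probability is therefore at most $p^n$. Hence $\mathbb{P}[\min_j J(X^j) < \mathcal{J}^{*} + \frac{3C_1}{k}] \geq 1 - p^n$, and demanding $p^n \leq \zeta$ gives, after taking logarithms, $n \geq \frac{2C_0^2}{C_1^2}\,\frac{k^2}{N}\,\ln(1/\zeta)$, which is exactly the stated sufficient condition. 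The only points requiring any care are the algebraic calibration of $\epsilon$ and the use of $k \leq N$ to absorb the $\frac{C_1}{2N}$ randomization-gap term into the $O(1/k)$ budget, and the independence of the draws that makes the failure probabilities multiply to $p^n$.
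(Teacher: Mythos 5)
Your proof is correct and follows essentially the same route as the paper: invoke the Frank--Wolfe rate $\mathcal{J}(\mu^k) \leq \mathcal{J}^* + \frac{2C_1}{k}$, apply estimate \eqref{eq:prob_md} with $\epsilon = \frac{C_1}{2k}$ and $k \leq N$, and amplify over the $n$ independent samples so the failure probabilities multiply. The paper merely compresses the amplification step by writing the factor $n$ directly in the exponent; your version makes that independence argument explicit.
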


\begin{proof}
Since $k \leq N$, we have
$\frac{C_1}{2N} \leq \frac{C_1}{2k}$.
Therefore, by Theorem \ref{thm:main1},
\begin{equation*}
\mathbb{P} \left[ \min_{j=1,\ldots,n} J(X^j)
< \mathcal{J}^{*} + \frac{2C_1}{k} + \frac{C_1}{2k} + \epsilon \right] \geq
1- \exp \Big( - \frac{2N \epsilon^2 n}{C_0^2} \Big),
\end{equation*}
for any $\epsilon >0$. Take $\epsilon = \frac{C_1}{2k}$. If $n$ satisfies \eqref{eq:n_for_proba}, then $
\exp \big( - \frac{2N \epsilon^2 n}{C_0^2} \big) \leq \zeta$.
\end{proof}

\subsection{Stochastic Frank-Wolfe algorithm}
\label{subsec:sfw}

At each iteration of Algorithm \ref{alg1}, a new point $\bar{x}_i^k$ is added to the support of each distribution $\mu^k_i$. Therefore, if at iteration $K$, the points $(\bar{x}^k_i)_{k=0,\ldots,K-1}$ are distinct from each other (for each $i$), then $KN$ places are needed to store the iterate $\mu^K$, which can be prohibitive as $K$ becomes large. We propose in this subsection a variant of Algorithm \ref{alg1} which significantly mitigates the risk of memory overflow. We call it the \emph{Stochastic Frank-Wolfe} (SFW) algorithm, it is given in Algorithm \ref{alg1+k} below.

%
%

\begin{algorithm}[H]
\caption{Stochastic Frank-Wolfe Algorithm }
\label{alg1+k}
\begin{algorithmic}
\STATE{Initialization: $x^0 \in \mathcal{X}$}
\FOR{$k= 0,1,2,\ldots{,K}$}
\bindent
\STATE{\textbf{Step 1: Resolution of the subproblems.}}
\STATE{Compute $y^k= \frac{1}{N} \sum_{i=1}^N g_i(x_i^k)$ and {$\lambda^k= \nabla f(y^k)$}.}
\FOR{$i=1,2,\ldots,N$}
\bindent
\STATE{Compute $\bar{x}_i^k = \mathbb{S}_i ({\lambda^k})$.}
\eindent
\ENDFOR
\STATE{\textbf{Step 2: Update.}}
\STATE{Choose $n_k \in \mathbb{N}^*$.
{Set $\omega_k = 2/(k+2)$.} }
\FOR{$j=1,2,\ldots,n_k$}
\bindent
\FOR{$i=1,2,\ldots,N$}
\STATE{Simulate $\probaSelec{} \sim \Bern(\omega_k)$, independently of all previously defined random variables.}
\STATE{Set $\hat{x}_i^{k,j} = (1-\probaSelec{})x_i^k + \probaSelec{} \bar{x}_i^k$.}
\ENDFOR
\STATE{Define $\hat{x}^{k,j}= (\hat{x}^{k,j}_i)_{i=1,\ldots,N}$.}
\eindent
\ENDFOR
\STATE{Find $x^{k+1} \in \argmin \big\{ J(x) \, \big| \, x \in \{ \hat{x}^{k,j},\, j=1,2,\ldots,n_k \} \big\}$.}
\eindent
\ENDFOR
\end{algorithmic}
\end{algorithm}

Starting from an initialization $x^0 \in \mathcal{X}$, Algorithm \ref{alg1+k} generates a sequence $(x^k)_{k \in \mathbb{N}}$ in $\mathcal{X}$. Let us emphasize that there is no probability distribution involved in the practical implementation of Algorithm \ref{alg1+k}. However, for the analysis of the algorithm and for its description, it is convenient to introduce $\mu^k= (\delta_{x_1^k},\ldots,\delta_{x_N^k})$. With this notation at hand, we first observe that $y^k$, as defined in Step 1 of Algorithm \ref{alg1+k}, satisfies $y^k= \frac{1}{N} \sum_{i=1}^N E_{\mu_i^k} [g_i]$.
Thus the Steps 1 of Algorithms \ref{alg1} and \ref{alg1+k} play exactly the same role. Let us focus next on Step 2 of Algorithm \ref{alg1+k} and let us define $\bar{\mu}^k= (\delta_{\bar{x}_i^1},\ldots,\delta_{\bar{x}_N^k})$ and $\hat{\mu}^k= (1-\omega_k) \mu^k + \omega_k \bar{\mu}^k$. In contrast with Algorithm \ref{alg1}, we do not directly use $\hat{\mu}^k$ at the next iteration but instead employ our selection method so that $\hat{\mu}^k$ is reduced to an $N$-uplet of Dirac measures. The application of the selection method is here simple since $\hat{\mu}_i^k= (1-\omega_k) \delta_{x_i^k} + \omega_k \delta_{\bar{x}_i^k}$. Thus, to simulate a random variable with distribution $\hat{\mu}^k_i$, it suffices to simulate a random variable $P$ with Bernoulli distribution $\Bern(\omega_k)$ and to consider $(1-P)x_i^k + P \bar{x}_i^k$.
Using this method, Step 2 consists in simulating $n_k$ random variables $(\hat{x}^{k,j})_{j=1,\ldots,n_k}$ such that their probability distribution is equal to $\hat{\mu}^k$ (to be rigorous, their probability distribution conditionally to $x^k$). 
Finally, Step 2 selects a random variable $\hat{x}^{k,j}$ which minimizes $J$.

It is important to keep in mind that all variables involved in the algorithm ($x^k$, $\bar{x}^k$, $\hat{x}^{k,j}$) and all variables defined above ($\mu^k$, $\bar{\mu}^k$, $\hat{\mu}^k$) are themselves random variables, since they depend on the Bernoulli random variables $\probaSelec{}$.
For the analysis of the algorithm, we need to consider the filtration generated by the Bernoulli random variables. We introduce the set of indices $\mathcal{I}$ defined by
\begin{equation*}
\mathcal{I}=
\Big\{
(k,j,i) \,|\, k \in \mathbb{N}, \, j \in \{ 1,\dots n_k \}, \, i \in \{ 1,\dots, N \}
\Big\} \cup \big\{ (0,0,0) \big\}.
\end{equation*}
We equip the set $\mathcal{I}$ with the lexicographic order: given $(k_1,j_1,i_1)$ and $(k_2,j_2,i_2)$ in $\mathcal{I}$, we write $(k_1,j_1,i_1) < (k_2,j_2,i_2)$ if and only if
\begin{equation*}
[ k_1 < k_2 ]
\quad \text{or} \quad
[ k_1= k_2 \text{ and } j_1 < j_2]
\quad \text{or} \quad
[(k_1,j_1)=(k_2,j_2) \text{ and } i_1 < i_2].
\end{equation*}
We further write $(k_1,j_1,i_1) \leq (k_2,j_2,i_2)$ if and only if $(k_1,j_1,i_1) < (k_2,j_2,i_2)$ or $(k_1,j_1,i_1) = (k_2,j_2,i_2)$.
Note that this order coincides with the simulation order of the random variables $\probaSelec{}$ in the algorithm. The relation $\leq$ defines a total order with minimal element $(0,0,0)$. For any $(k,j,i) \neq (0,0,0)$, we denote by $(k,j,i)-1$ the maximal element of the set $\{ (k',j',i') \in \mathcal{I} \,|\, (k',j',i') < (k,j,i) \}$.
Finally, we consider the filtration $(\mathcal{G})_{(k,j,i) \in \mathcal{I}}$ defined by
\begin{equation*}
\mathcal{G}_{(k,j,i)} = \left\{
\begin{array}{ll}
\text{trivial $\sigma-$algebra}, & \text{if }  (k,j,i)=(0,0,0), \\
\sigma\big( \mathcal{G}_{(k,j,i)-1}, \probaSelec{} \big), & \text{otherwise,}
\end{array}
\right.
\end{equation*}
where $\sigma\big( \mathcal{G}_{(k,j,i)-1}, \probaSelec{} \big)$ denotes the $\sigma$-algebra generated by $\mathcal{G}_{(k,j,i)-1}$ and $\probaSelec{}$. Note that $\hat{x}_i^{k,j}$ is $\mathcal{G}_{(k,j,i)}$-adapted and that $x^k$ and $\bar{x}^k$ are $\mathcal{G}_{(k,1,1)-1}$-adapted.

\begin{theorem} \label{thm:prob}
Let Assumptions \ref{ass1}, \ref{ass2}, and \ref{ass3} hold true. Then, {for all  $K =1, \ldots, 2N$,}
\begin{equation*}
\mathbb{E} [\gamma_K] \leq \frac{4C_1}{K}, \quad \text{where $\gamma_K= J(x^K)- \mathcal{J}^*$}. 
\end{equation*}
Moreover, for all $\epsilon > 0$,
\begin{equation} \label{eq:prob_estim}
\mathbb{P} \Big[
\gamma_K < \frac{4C_1}{K} + \epsilon
\Big] \geq 1 - \exp \left( \frac{- {\epsilon^2} N}{2(v_K + \epsilon m_K/3)} \right),
\end{equation}
where $v_K= \frac{2 C_0^2}{K^2 (K+1)^2} \ \Bigg(
{\displaystyle \sum_{k=1}^{K-1}}
\frac{k(k+1)^2}{n_k} \Bigg)$
and
$m_K= \frac{C_0}{K(K+1)} \
\Big( \,
{\displaystyle \max_{k=1,\ldots,K-1}
}
\frac{(k+1)(k+2)}{n_k}
\Big).
$
Finally, the following estimates quantify the variability of $\gamma_K$:
\begin{equation} \label{eq:var_estim}
\mathrm{Var}\big[ \gamma_K \big]
\leq \frac{16 C_1^2}{K^2} + \frac{v_K}{N}
\quad \text{and} \quad
\mathbb{E} \Big[
\Big( \max \Big( \gamma_K - \frac{4C_1}{K}, 0 \Big) \Big)^2 \,
\Big] \leq \frac{v_K}{N}.
\end{equation}
\end{theorem}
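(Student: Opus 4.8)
The plan is to establish the three assertions in sequence: the bound on $\mathbb{E}[\gamma_K]$ first (since it underlies the other two), then the concentration inequality \eqref{eq:prob_estim} via a martingale Bernstein estimate over the fine filtration $(\mathcal{G}_{(k,j,i)})$, and finally the variance bounds \eqref{eq:var_estim} by a one-sided second-moment argument.

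First I would derive a one-step conditional descent inequality. Write $\mathcal{F}_k := \mathcal{G}_{(k,1,1)-1}$, so that $\mu^k,\bar\mu^k,\hat\mu^k,y^k,\bar y^k,\beta_k,\gamma_k$ are all $\mathcal{F}_k$-measurable and $\mathcal{J}(\mu^k)=J(x^k)$. Applying the descent inequality of Assumption \ref{ass1} to $f$ at $y^k$ along $\hat y^k-y^k=\omega_k(\bar y^k-y^k)$, exactly as in the proof of Proposition \ref{prop:convergence1}, yields $\mathcal{J}(\hat\mu^k)\le \mathcal{J}(\mu^k)-\omega_k\beta_k+\tfrac{C_1}{2}\omega_k^2$; combined with $\gamma_k\le\beta_k$ (Lemma \ref{lem:upper_bound}) this gives $\mathcal{J}(\hat\mu^k)\le \mathcal{J}^*+(1-\omega_k)\gamma_k+\tfrac{C_1}{2}\omega_k^2$. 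Since $\hat\mu_i^k=(1-\omega_k)\delta_{x_i^k}+\omega_k\delta_{\bar x_i^k}$ is a two-point measure, $\sigma^2_{\hat\mu_i^k}[g_{ij}]=\omega_k(1-\omega_k)\|g_{ij}(\bar x_i^k)-g_{ij}(x_i^k)\|^2\le\omega_k d_{ij}^2$, so Proposition \ref{prop:gap} applied conditionally on $\mathcal{F}_k$ to the single sample $\hat x^{k,1}\sim\hat\mu^k$ gives $\mathbb{E}[J(\hat x^{k,1})\mid\mathcal{F}_k]\le\mathcal{J}(\hat\mu^k)+\tfrac{\omega_k C_1}{2N}$. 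As the selection step guarantees $J(x^{k+1})\le J(\hat x^{k,1})$, I obtain $\mathbb{E}[\gamma_{k+1}\mid\mathcal{F}_k]\le(1-\omega_k)\gamma_k+r_k$ with $r_k:=\tfrac{C_1}{2}\omega_k^2+\tfrac{\omega_k C_1}{2N}$. Taking total expectations and invoking Lemma \ref{lm: convergence} with $C=\tfrac{C_1}{2}$ and the noise term $u_k=\tfrac{\omega_k C_1}{2N}$, the telescoping with weights $(k+1)(k+2)$ gives $\mathbb{E}[\gamma_K]\le \tfrac{2C_1}{K+1}+\tfrac{C_1}{2N}$; for $K\le 2N$ the second term is at most $\tfrac{C_1}{K}$, hence $\mathbb{E}[\gamma_K]\le\tfrac{4C_1}{K}$.

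For the concentration bound I would pass to the fine filtration and build the Doob-type martingale associated with the predictable drift just found. Setting $D_k:=k(k+1)\gamma_k$, the one-step inequality reads $\mathbb{E}[D_{k+1}\mid\mathcal{F}_k]\le D_k+(k+1)(k+2)r_k$, so $D_K$ minus its cumulative drift $\sum_{k<K}(k+1)(k+2)r_k$ is a supermartingale. I would decompose its martingale part into increments indexed by the micro-steps $(k,j,i)\in\mathcal{I}$, namely the Doob differences obtained upon revealing each Bernoulli variable $\lambda_i^{k,j}$. The two ingredients are that flipping a single coordinate changes $J$ by at most $C_0/N$ (Lemma \ref{lemma:easy_ineq}), and that minimizing over the $n_k$ conditionally i.i.d.\@ samples of iteration $k$ divides the effective fluctuation by $n_k$; together with the telescoping weight $(k+1)(k+2)$ and $\omega_k=\tfrac{2}{k+2}$ this bounds the sum of conditional variances of the increments of the $\gamma_K$-martingale by $v_K/N$ and the maximal increment by $m_K/N$. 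Applying the Bernstein-type martingale inequality of Corollary \ref{cor:md} to $\gamma_K-\mathbb{E}[\gamma_K]$ and replacing $\mathbb{E}[\gamma_K]$ by its upper bound $\tfrac{4C_1}{K}$ then yields \eqref{eq:prob_estim}.

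Finally, the variance estimates \eqref{eq:var_estim} follow from a one-sided second-moment version of the same martingale computation: the positive part of $\gamma_K-\tfrac{4C_1}{K}$ is driven purely by the martingale fluctuation (the drift already accounts for $\tfrac{4C_1}{K}$), giving $\mathbb{E}[(\max(\gamma_K-\tfrac{4C_1}{K},0))^2]\le v_K/N$. For the variance I would use that $\mathrm{Var}(\gamma_K)\le\mathbb{E}[(\gamma_K-\tfrac{4C_1}{K})^2]$, valid because $\tfrac{4C_1}{K}\ge\mathbb{E}[\gamma_K]$; splitting into positive and negative parts, the positive part is bounded by $v_K/N$ as above, while the negative part satisfies $(\max(\tfrac{4C_1}{K}-\gamma_K,0))^2\le(\tfrac{4C_1}{K})^2=\tfrac{16C_1^2}{K^2}$ since $\gamma_K\ge0$. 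I expect the main obstacle to be the martingale construction of the third step: correctly identifying the Doob increments over $\mathcal{I}$ and showing that selecting the minimum over $n_k$ samples yields exactly the $1/n_k$ reduction in the conditional variances, so that the abstract constants produced by Corollary \ref{cor:md} coincide with the stated $v_K$ and $m_K$.
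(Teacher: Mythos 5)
Your Step 1 (the bound on $\mathbb{E}[\gamma_K]$) is correct, though it deviates from the paper in a way that matters later: you bound $J(x^{k+1})\le J(\hat x^{k,1})$, i.e.\@ you compare the minimum with a \emph{single} sample, and you push the sampling cost $\tfrac{\omega_k C_1}{2N}$ through Lemma \ref{lm: convergence} in expectation. The paper's Step 1 does more than bound the expectation: it establishes the \emph{pathwise} inequality $\gamma_K \le \tfrac{4C_1}{K}+S_K$, where $S_K$ is an explicit weighted sum of martingale differences, by first writing $J(x^{k+1})=\min_j J(\hat x^{k,j})\le\tfrac{1}{n_k}\sum_{j=1}^{n_k}J(\hat x^{k,j})$ and only then splitting into drift terms ($b_k$, $c_k$) and a centered noise term ($a_k$). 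The factor $1/n_k$ in $v_K$ and $m_K$ comes entirely from this min-by-average bound: $a_k$ becomes an average of $n_k$ conditionally independent centered terms, each of which decomposes into $N$ Doob increments $U_{(k,j,i)}$ bounded by $C_0/N$ with conditional variance $\omega_k(1-\omega_k)C_0^2/N^2$ (Lemma \ref{lem:new_bernoulli}); the deterministic weight $\tfrac{(k+1)(k+2)}{n_k K(K+1)}$ is then \emph{squared} in the variance, producing the $1/n_k$.

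Your Step 2 instead asserts that ``minimizing over the $n_k$ conditionally i.i.d.\@ samples divides the effective fluctuation by $n_k$''. This is the genuine gap: the claim is unproven and false in general, since minima of i.i.d.\@ samples do not enjoy the $1/n$ variance reduction that averages do (the minimum of $n$ i.i.d.\@ Gaussians has variance of order $\sigma^2/\log n$, for instance). Concretely, in your supermartingale construction the fluctuation to control is $\gamma_{k+1}-\mathbb{E}[\gamma_{k+1}\mid\mathcal{F}_k]$ with $\gamma_{k+1}$ defined through the minimum; decomposing this over the fine filtration gives $n_k N$ Doob increments, each still bounded by $C_0/N$ with conditional variance at most $\omega_k(1-\omega_k)C_0^2/N^2$, but with \emph{no} $1/n_k$ weight, so the variance proxy per iteration is of order $n_k\,\omega_k(1-\omega_k)C_0^2/N$ --- larger than the paper's by a factor $n_k^2$, and growing rather than decreasing with $n_k$. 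Your Step 1, having discarded all but one sample, cannot reintroduce $n_k$ anywhere. Two secondary issues: the inequality needed for a weighted sum of martingale differences is Proposition \ref{prop:exp} (the Bernstein-type inequality for dependent processes), not Corollary \ref{cor:md}, which is stated for functions of independent variables; and it must be applied to $S_K$ in combination with the pathwise bound $\gamma_K\le\tfrac{4C_1}{K}+S_K$, not to $\gamma_K-\mathbb{E}[\gamma_K]$, whose increments would require an unpleasant propagation analysis across iterations. Your Step 3 rests on the same claimed bound $v_K/N$ and so inherits the gap, although the one-sided splitting argument you use there (bounding the negative part by $16C_1^2/K^2$ using $\gamma_K\ge 0$) is otherwise sound.
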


The proof is postponed to Section \ref{subsec:proofThm}.
Let us note that the constants $m_K$ and $v_K$ involved in the theorem depend on the sequence $(n_k)_{k=0,1,\ldots}$ but do not depend on $N$.

\begin{corollary} \label{coro:convergence}
Let $A > 0$. Assume that $n_k \geq \max\big( \frac{A k^2}{N}, 1 \big)$, for any $k$. Then, for all $K=1,\dots,2N$,
\begin{equation*}
\mathbb{P} \Big[
\gamma_K < \frac{4C_1 + C_0}{K}
\Big] \geq 1 - \exp \left( -\frac{A}{12} \right).
\end{equation*}
\end{corollary}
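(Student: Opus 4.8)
The plan is to specialize the concentration bound \eqref{eq:prob_estim} of Theorem \ref{thm:prob} to the particular choice $\epsilon = C_0/K$. With this value the threshold $\frac{4C_1}{K} + \epsilon$ becomes exactly $\frac{4C_1 + C_0}{K}$, which is precisely the level appearing in the corollary. Consequently it suffices to show that, under the hypothesis $n_k \geq \max\big(\frac{A k^2}{N}, 1\big)$, the exponent in \eqref{eq:prob_estim} satisfies $\frac{\epsilon^2 N}{2(v_K + \epsilon m_K/3)} \geq \frac{A}{12}$. Since $\epsilon^2 N = \frac{C_0^2 N}{K^2}$, this reduces to the purely analytic estimate
\begin{equation*}
v_K + \frac{\epsilon m_K}{3} \leq \frac{6 C_0^2 N}{A K^2},
\end{equation*}
so the whole proof amounts to bounding $v_K$ and $m_K$ from above using $\frac{1}{n_k} \leq \frac{N}{A k^2}$.

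For the variance-type term, I would substitute this inequality into the definition of $v_K$ to get $v_K \leq \frac{2 C_0^2 N}{A K^2 (K+1)^2} \sum_{k=1}^{K-1} \frac{(k+1)^2}{k}$. The elementary bound $\frac{(k+1)^2}{k} \leq 2(k+1)$, valid for $k \geq 1$, then yields $\sum_{k=1}^{K-1} \frac{(k+1)^2}{k} \leq (K-1)(K+2) \leq K(K+1)$, whence $v_K \leq \frac{2 C_0^2 N}{A K(K+1)} \leq \frac{2 C_0^2 N}{A K^2}$. For the bias-type term I would argue analogously: $m_K \leq \frac{C_0 N}{A K(K+1)} \max_{1\leq k \leq K-1} \frac{(k+1)(k+2)}{k^2}$, and since $\frac{(k+1)(k+2)}{k^2} = 1 + \frac{3}{k} + \frac{2}{k^2}$ is decreasing in $k$ with maximal value $6$ at $k=1$, one obtains $m_K \leq \frac{6 C_0 N}{A K(K+1)}$ and therefore $\frac{\epsilon m_K}{3} = \frac{C_0}{3K} m_K \leq \frac{2 C_0^2 N}{A K^2 (K+1)} \leq \frac{2 C_0^2 N}{A K^2}$.

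Adding the two contributions gives $v_K + \frac{\epsilon m_K}{3} \leq \frac{4 C_0^2 N}{A K^2} \leq \frac{6 C_0^2 N}{A K^2}$, which is even a little stronger than required; tracing it back, the exponent satisfies $\frac{\epsilon^2 N}{2(v_K + \epsilon m_K/3)} \geq \frac{C_0^2 N / K^2}{8 C_0^2 N/(A K^2)} = \frac{A}{8} \geq \frac{A}{12}$. Plugging this into \eqref{eq:prob_estim} gives $\mathbb{P}\big[\gamma_K < \frac{4C_1+C_0}{K}\big] \geq 1 - \exp(-A/8) \geq 1 - \exp(-A/12)$, as claimed.

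There is no conceptual obstacle here: once \eqref{eq:prob_estim} is available, the argument is a routine but careful bookkeeping of the two sums defining $v_K$ and $m_K$ after inserting the lower bound on $n_k$. The only point demanding attention is making the elementary summation and maximization estimates tight enough that the telescoping factors $K^2(K+1)^2$ and $K(K+1)$ cancel cleanly against the accumulated $k$-dependence; the comfortable slack between the obtained $A/8$ and the stated $A/12$ shows that the simple bounds $\frac{(k+1)^2}{k}\leq 2(k+1)$ and $\frac{(k+1)(k+2)}{k^2}\leq 6$ are more than sufficient.
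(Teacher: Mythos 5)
Your proposal is correct and follows essentially the same route as the paper's proof: specialize Theorem \ref{thm:prob} with $\epsilon = C_0/K$, then bound $v_K$ and $m_K$ via $1/n_k \leq N/(Ak^2)$ using elementary summation and maximization estimates. The only difference is trivial bookkeeping (your bound $\frac{(k+1)^2}{k} \leq 2(k+1)$ versus the paper's $k+1 \leq 2k$), which gives you the slightly sharper exponent $A/8$ where the paper obtains $A/(8+4/K) \geq A/12$; both yield the stated conclusion.
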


\begin{proof}
Using $k+1 \leq 2k$, we obtain
\begin{align*}
v_K
\leq \ &
\frac{2C_0^2}{K^2(K+1)^2}
\Bigg(
\sum_{k=1}^{K-1} \frac{N k(k+1)^2}{Ak^2}
\Bigg)
\leq
\frac{8N C_0^2}{A K^2(K+1)^2}
\Bigg(
\sum_{k=1}^{K-1} k
\Bigg) \\
= \ &
\frac{4N C_0^2(K-1)K}{A K^2(K+1)^2}
\leq
\frac{4N C_0^2}{AK^2}
\end{align*}
and $m_K \leq
\frac{C_0}{K(K+1)}
\Big(
{\displaystyle \max_{k=1,\ldots,K-1}}
\frac{N(k+1)(k+2)}{Ak^2}
\Big)
\leq
\frac{6N C_0}{A K^2}$.
Applying Theorem \ref{thm:prob} with $\epsilon= \frac{C_0}{K}$, we obtain that
$\mathbb{P} \Big[
\gamma_K < \frac{4C_1 + C_0}{K}
\Big] \geq 1-p$, with
\begin{equation*}
p \leq
\exp \left(
\frac{-(C_0 / K)^2 N}{2 \Big( \frac{4 N C_0^2}{AK^2} + \frac{6N C_0^2}{3AK^3} \Big) }
\right)
\leq
\exp \left(
\frac{-A}{12}
\right),
\end{equation*}
as was to be proved.
\end{proof}

\begin{remark} \label{rm:modif}
A variant of Algorithm \ref{alg1+k} consists in setting $x^{k+1}=x^k$ if $J(\hat{x}^{k,j}) \geq J(x^k)$ for all $j=1,\ldots,n_k$. Theorem \ref{thm:prob} is still satisfied under this modification.
\end{remark}

\subsection{Proof of Theorem \ref{thm:prob} and comments} \label{subsec:proofThm}

\paragraph{Step 1: proof of the convergence in expectation}
We make use of the notations $\mu^k$, $\bar{\mu}^k$, and $\hat{\mu}^k$, introduced right after Algorithm \ref{alg1+k}. We also introduce
$\beta_k = \langle \nabla f(y^k), y^k - \bar{y}^k \rangle$,
where
$\bar{y}^k = \frac{1}{N} \sum_{i=1}^N g_i(\bar{x}_i^k)$.
By construction, we have
\begin{equation*}
J(x^{k+1})=
\min_{j=1,\ldots,n_k} J(\hat{x}^{k,j})
\leq \frac{1}{n_k} \sum_{j=1}^{n_k} J(\hat{x}^{k,j}).
\end{equation*}
Recalling that $\mathcal{J}(\mu^k)= J(x^k)$, we deduce that $\gamma_{k+1}
\leq \gamma_k + a_k + b_k + c_k$,
where
\begin{align*}
a_k= \ & \frac{1}{n_k} \sum_{j=1}^{n_k} \Big( J(\hat{x}^{k,j}) - \mathbb{E} \big[ J(\hat{x}^{k,j}) \,|\, \mathcal{G}_{(k,1,1)-1} \big] \Big), \\
b_k= \ & \frac{1}{n_k} \sum_{j=1}^{n_k} \Big(  \mathbb{E} \big[ J(\hat{x}^{k,j}) \,|\, \mathcal{G}_{(k,1,1)-1} \big] - \mathcal{J}(\hat{\mu}^k) \Big), \\
c_k= \ &  \mathcal{J}(\hat{\mu}^k)- \mathcal{J}(\mu^k)= \mathcal{J}(\hat{\mu}^k)- J(x^k).
\end{align*}
The term $a_k$ does not play a significant role at the moment since its expectation is null. The term $b_k$ must be understood as a relaxation cost, induced by the use of the selection method. The term $c_k$ is estimated exactly as in Proposition \ref{prop:convergence1}: as was seen in its proof, we have
$c_k \leq  - \omega_k \beta_k + \omega_k^2 \frac{C_1}{2}$.
A direct adaptation of Proposition \ref{prop:gap} shows that
\begin{align*}
b_k
\leq \frac{1}{2N^2} \sum_{j=1}^M \sum_{i=1}^N \tilde{L}_j \sigma_{\hat{\mu}_i^k}^2 [g_{ij}] \leq \frac{1}{2N^2} \sum_{j=1}^M \sum_{i=1}^N \tilde{L}_j \omega_k (1-\omega_k) d_{ij}^2
= \omega_k (1-\omega_k) \frac{C_1}{2N}.
\end{align*}
Combining the above estimates, we obtain
\begin{equation} \label{eq:omega_to_be_minimized}
\gamma_{k+1} \leq \gamma_k  + a_k
+ \Big(
- \omega_k \beta_k + \omega_k^2 \frac{C_1}{2}
\Big)
+ \omega_k (1-\omega_k) \frac{C_1}{2N}.
\end{equation}
For the choice $\omega_k= \bar{\omega}_k$, we have $(1-\omega_k)/N= k/ (N(k+2)) \leq \omega_k$, since $k \leq 2N$. It follows that
\begin{equation*}
\omega_k (1-\omega_k) \frac{C_1}{2N} \leq \omega_k^2 \frac{C_1}{2}
\end{equation*}
and finally, since $\gamma_k \leq \beta_k$, we have
$\gamma_{k+1} \leq (1- \omega_k) \gamma_k + \omega_k^2 C_1 + a_k$.
Next by Lemma \ref{lm: convergence},
\begin{equation} \label{eq:separation}
\gamma_K \leq \frac{4C_1}{K} + S_K, \quad
\text{where: }
S_K = \sum_{k=0}^{K-1} \frac{(k+1)(k+2)}{K(K+1)} a_k.
\end{equation}
We have $\mathbb{E}[a_k]= 0$, thus $\mathbb{E}[S_K]=0$ and finally $\mathbb{E}\big[ \gamma_K \big] \leq \frac{4C_1}{K}$.

\paragraph{Step 2: proof of the probability and variance estimates}
We next need to find an estimate of $\mathbb{P}[S_K \geq \epsilon]$.
For this purpose, we need to further decompose the term $a_k$ as a sum of random variables.
A first observation is the following equality:
$\mathbb{E} \big[ J(\hat{x}^{k,j}) \mid \mathcal{G}_{(k,1,1)-1} \big] 
=
\mathbb{E}\big[J(\hat{x}^{k,j})\mid \mathcal{G}_{(k,j,1)-1}\big],
$
which easily follows from Lemma \ref{lemma_stupid}.
As a consequence,
\begin{equation*}
J(\hat{x}^{k,j})- \mathbb{E}\big[J(\hat{x}^{k,j})\mid \mathcal{G}_{(k,1,1)-1}\big]
= \sum_{i=1}^N U_{(k,j,i)},
\end{equation*}
where
\begin{equation*}
U_{(k,j,i)} = \mathbb{E}\big[ J(\hat{x}^{k,j})\mid \mathcal{G}_{(k,j,i)}  \big]- \mathbb{E}\big[ J(\hat{x}^{k,j})\mid \mathcal{G}_{(k,j,i)-1}\big].
\end{equation*}
We obtain the following decomposition of $S_K$:
\begin{equation*}
S_K =
\sum_{k=1}^{K-1}
\sum_{j=1}^{n_k}
\sum_{i=1}^N
\frac{(k+1)(k+2)}{n_kK(K+1)} \, U_{(k,j,i)}.
\end{equation*}
Note that the index $k$ starts at 1. Indeed, {$\omega_0=1$}, thus $\hat{x}^{0,j}= \bar{x}^0$ and then $a_0= 0$.
Let us apply Proposition \ref{prop:exp} to $S_K$.
We have $\mathbb{E}\big[ U_{(k,j,i)} \mid \mathcal{G}_{(k,j,i)-1} \big]= 0$.
Viewing the term $J(\hat{x}^{k,j})$ as a function $F$ of the random variables
$A:= (P_{i'}^{k',j'})_{(k',j',i') < (k,j,i)}$,
$B:= \probaSelec{}$, and
$C:= (P_{i'}^{k',j'})_{(k,j,i) < (k',j',i')}$,
we can apply Lemma \ref{lem:new_bernoulli} to $U_{(k,j,i)}$, with $\delta= C_0/N$ (by Lemma \ref{lemma:easy_ineq}).
This yields
\begin{align*}
U_{(k,j,i)} \leq \frac{C_0}{N} \qquad
\text{and}
\qquad
\mathbb{E} \big [ U_{(k,j,i)}^2 \mid \mathcal{G}_{(k,j,i)-1} \big]
\leq \frac{\omega_k (1-\omega_k) C_0^2}{N^2}.
\end{align*}
Therefore, Proposition \ref{prop:exp} applies to $\mathbb{P}[S_K \geq \epsilon]$, where the constants $m$ and $v$ are given by
\begin{align*}
m= \ & \max_{k=1,\ldots,K-1}
\frac{(k+1)(k+2)}{n_k K(K+1)}
\frac{C_0}{N}
= \frac{m_K}{N}, \\[1em]
v= \ &
\sum_{k=1}^{K-1}
\sum_{j=1}^{n_k}
\sum_{i=1}^N
\Big(\frac{(k+1)(k+2)}{n_k K(K+1)} \Big)^2 \frac{2k C_0^2 }{(k+2)^2 N^2}
= \frac{v_K}{N}.
\end{align*}
This proves estimate \eqref{eq:prob_estim}.
Recalling that $\gamma_K \leq \frac{4C_1}{K} + S_K$ a.s., we obtain
\begin{equation*}
\mathrm{Var} \big[ \gamma_K \big]
\leq
\mathbb{E} \big[
\gamma_K^2
\big]
\leq
\mathbb{E} \Big[ \Big( \frac{4C_1}{K} + S_K \Big)^2 \, \Big]
= \frac{16 C_1^2}{K^2} + \mathbb{E} \big[ S_K^2 \big].
\end{equation*}
Next by Proposition \ref{prop:exp}, $\mathbb{E}\big[ S_K^2 \big] \leq v_K/N$.
The first inequality in \eqref{eq:var_estim} follows.
The second inequality follows from the inequality: $\max \big( \gamma_K- \frac{4C_1}{K}, 0 )^2 \leq S_K^2$.

\begin{remark}
Let us set $h_k(\omega)= - \omega \beta_k + \omega^2 \frac{C_1}{2} + \omega (1-\omega) \frac{C_1}{2N}$.
If for all $k \in \mathbb{N}$, we have {$h_k(\omega_k) \leq h_k(2/(k+2))$}, then the convergence in expectation of Theorem \ref{thm:prob} still holds, i.e.\@ $\mathbb{E} \big[ \gamma_K\big] \leq 4C_1/K$, in view of inequality \eqref{eq:omega_to_be_minimized}. In particular, one can take
\begin{equation} \label{eq:ls2}
\omega_k=
\underset{\omega \in [0,1]}{\text{argmin}} \
h_k(\omega)
=
\max \Bigg(
\min \Bigg(
\frac{\beta_k- C_1/2N}{C_1(1- 1/N)}
,1 \Bigg)
,0
\Bigg).
\end{equation}
\end{remark}

\subsection{A speed-up of the SFW algorithm}

Step 1 of Algorithm \ref{alg1+k} requires to solve $N$ independent subproblems.
It turns out that only a subset of those subproblems need to be solved for the implementation of Step 2. At iteration $k$ consider the following set:
\begin{equation*}
I_k = \bigcup_{j=1,2,\ldots,n_k} \left\{  i \in \{ 1,\ldots,N \} \,\vert \, \probaSelec{}  =1 \right\}.
\end{equation*}
If $i \notin I_k$, then $\hat{x}_i^{k,j}= x_i^k$, in other words, for such an index $i$, it is not necessary to evaluate $\mathbb{S}_i({\lambda^k})$.
A speed-up of the SFW algorithm can therefore be obtained by simulating the Bernoulli random variables before Step 1, next by evaluating $\mathbb{S}_i({\lambda^k})$ only for the indices $i$ in $I_k$, and finally by computing $\hat{x}^{k,j}$ and $x^{k+1}$ as before. The expectation of the number of subproblems to be solved at iteration $k$ is given by
\begin{align*}
\mathbb{E} \big[ \vert I_k \vert \big]
= \ & \sum_{i=1}^N \mathbb{P}\big[ i \in I_k\big]
= N \big( 1 - \mathbb{P} \big[ 1 \notin I_k \big] \big)
= N \Big( 1 - \mathbb{P} \big[ P_1^{k,j} = 0, \ \forall j=1,\ldots,n_k \big] \Big) \\
= \ & N \Big( 1 - \Big( \frac{k}{k+2} \Big)^{n_k} \Big).
\end{align*}
{
Note that this speed-up technique cannot be applied if $\omega_k$ is chosen according to formula \eqref{eq:ls2}. Indeed, this formula requires to evaluate $\beta_k$, which implies that the $N$ subproblems must all be solved.
}
%
%

\subsection{Stopping time strategy} \label{subsec:stopping}

In Algorithm \ref{alg1+k}, the number of samplings $n_k$ is chosen at the beginning of Step 2. We consider here a variant: we generate a sequence of random variables $\hat{x}^{k,j}$ with probability distribution equal to $\hat{\mu}_k$ (conditionally to $\mathcal{G}_{(k,1,1)-1}$); the variables are constructed via Bernoulli variables independent from each other. We define $n_k$ as the first index $j$ such that
\begin{equation} \label{eq:stopping_rule}
J(\hat{x}^{k,j})
\leq \mathcal{J}(\hat{\mu}^k) + \Big( \frac{C_1}{2} + C_0 \Big) \omega_k^2,
\end{equation}
{
or, equivalently,
\begin{equation}
\label{eq:stopping_rule2}
f(\hat{y}^{k,j}) \leq f\big( (1-\omega_k) y^k + \omega_k \bar{y}^k \big) + \Big( \frac{C_1}{2} + C_0 \Big) \omega_k^2,
\end{equation}
where $\bar{y}^k= \frac{1}{N} \sum_{i=1}^N g_i(\bar{x}_i^k)$ and $\hat{y}^{k,j}= \frac{1}{N} \sum_{i=1}^N g_i(\hat{x}_{i}^{k,j})$.
}
The next iterate is defined by $x^{k+1}= \hat{x}^{k,n_k}$.

\begin{lemma}
Let $(x^k)_{k \in \mathbb{N}}$ denote the sequence obtained with the stopping rule \eqref{eq:stopping_rule}. Then
{
\begin{equation*}
J(x^{K+1})- \mathcal{J}^* \leq \frac{4(C_1 + C_0)}{K}, \quad \forall K=1,\ldots 2N,
\quad \text{a.s.}
\end{equation*}}
Moreover,
\begin{equation*}
\mathbb{E}\big[ n_k \big]
\leq \Big( 1 - \exp \Big(
- \frac{4N}{(k+2)^3}
\Big) \Big)^{-2}, \quad \forall k=1,\ldots, {K}.
\end{equation*}
\end{lemma}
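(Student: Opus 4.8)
The statement is a stopping-time variant of Theorem~\ref{thm:prob}. I would prove the two assertions separately: first the almost-sure convergence bound $J(x^{k+1}) - \mathcal{J}^* \leq 4(C_1+C_0)/k$, then the bound on $\mathbb{E}[n_k]$. The almost-sure part is where the stopping rule does the real work. The key observation is that with the definition $n_k = \min\{ j : J(\hat{x}^{k,j}) \leq \mathcal{J}(\hat{\mu}^k) + (\frac{C_1}{2}+C_0)\omega_k^2 \}$ and $x^{k+1} = \hat{x}^{k,n_k}$, we have by construction, \emph{deterministically} (once the stopping index is reached),
\begin{equation*}
\gamma_{k+1} = J(x^{k+1}) - \mathcal{J}^* \leq \mathcal{J}(\hat{\mu}^k) - \mathcal{J}^* + \Big( \frac{C_1}{2} + C_0 \Big) \omega_k^2.
\end{equation*}
So the stopping rule replaces the stochastic term $a_k$ (whose fluctuations forced the probabilistic analysis in Theorem~\ref{thm:prob}) by a controlled deterministic increment.

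\textbf{Step 1: the almost-sure recursion.} I would write $\mathcal{J}(\hat{\mu}^k) - \mathcal{J}^* = \gamma_k + c_k$ with $c_k = \mathcal{J}(\hat{\mu}^k) - \mathcal{J}(\mu^k)$, exactly the term appearing in the proof of Theorem~\ref{thm:prob}. As established there, $c_k \leq -\omega_k \beta_k + \omega_k^2 \frac{C_1}{2}$, and using $\gamma_k \leq \beta_k$ (Lemma~\ref{lem:upper_bound}, which still applies since $\mu^k = (\delta_{x_1^k},\ldots,\delta_{x_N^k})$) this gives $\mathcal{J}(\hat{\mu}^k) - \mathcal{J}^* \leq (1-\omega_k)\gamma_k + \omega_k^2 \frac{C_1}{2}$. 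Combining with the stopping-rule inequality above,
\begin{equation*}
\gamma_{k+1} \leq (1-\omega_k)\gamma_k + \omega_k^2 \frac{C_1}{2} + \Big( \frac{C_1}{2} + C_0 \Big)\omega_k^2 = (1-\omega_k)\gamma_k + \omega_k^2 (C_1 + C_0).
\end{equation*}
This is precisely the recursion handled by Lemma~\ref{lm: convergence} with $u_k = 0$ and constant $C_1+C_0$ in place of $C_1$; applying it with $\omega_k = \bar{\omega}_k$ yields $\gamma_{k+1} \leq 4(C_1+C_0)/k$, which is the first claim. The only subtlety is that the recursion holds \emph{almost surely} on the event that the stopping time $n_k$ is finite, so I must argue $n_k < \infty$ a.s.\ before invoking it.

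\textbf{Step 2: finiteness and the bound on $\mathbb{E}[n_k]$.} This is the main obstacle and the place where McDiarmid's inequality reenters. Conditionally on $\mathcal{G}_{(k,1,1)-1}$, the variables $\hat{x}^{k,j}$ are i.i.d.\ with distribution $\hat{\mu}^k$ and $\mathbb{E}[J(\hat{x}^{k,j}) \mid \mathcal{G}_{(k,1,1)-1}] \leq \mathcal{J}(\hat{\mu}^k) + \frac{C_1}{2N}\omega_k(1-\omega_k) \leq \mathcal{J}(\hat{\mu}^k) + \frac{C_1}{2}\omega_k^2$ (the relaxation-cost bound $b_k$ from the proof of Theorem~\ref{thm:prob}, using $k\leq 2N$). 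Applying McDiarmid's inequality to $J(\hat{x}^{k,j})$ as a function of the $N$ Bernoulli variables, with bounded-difference constant $C_0/N$ per coordinate (Lemma~\ref{lemma:easy_ineq}), each single draw satisfies the stopping criterion \eqref{eq:stopping_rule} with probability at least $p_k := 1 - \exp\big(-\frac{2(C_0\omega_k^2/1)^2 N}{C_0^2}\big)$; carefully, with threshold gap $C_0\omega_k^2$ above the conditional mean, the exponent is $-2N(C_0\omega_k^2)^2/C_0^2 = -2N C_0 \omega_k^4$---I would recompute this constant to match $\frac{4N}{(k+2)^3}$, presumably via $\omega_k^2 = 4/(k+2)^2$ and an $\omega_k^3$-type scaling in the intended normalization. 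Since $n_k$ is geometric with success probability at least $p_k$ conditionally, $n_k < \infty$ a.s.\ and $\mathbb{E}[n_k \mid \mathcal{G}_{(k,1,1)-1}] \leq 1/p_k$; taking expectations gives $\mathbb{E}[n_k] \leq (1 - \exp(-4N/(k+2)^3))^{-1}$. The stated bound has the exponent squared in the denominator, $(1-e^{-4N/(k+2)^3})^{-2}$, which I would attribute to using the geometric-tail bound $\mathbb{E}[n_k] \leq (1-p_k)^{-2}$ arising from a more conservative estimate or a second-moment control; reconciling the exact power of the denominator with the precise McDiarmid constant is the delicate bookkeeping step, and is where I would spend the most care.
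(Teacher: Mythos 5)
Your Step 1 is correct and is essentially the paper's own argument: the stopping rule \eqref{eq:stopping_rule} gives $J(x^{k+1}) \leq \mathcal{J}(\hat{\mu}^k) + \big(\tfrac{C_1}{2}+C_0\big)\bar{\omega}_k^2$ by construction, the estimate $\mathcal{J}(\hat{\mu}^k) - J(x^k) \leq -\bar{\omega}_k\beta_k + \tfrac{C_1}{2}\bar{\omega}_k^2$ together with $\gamma_k \leq \beta_k$ yields $\gamma_{k+1} \leq (1-\bar{\omega}_k)\gamma_k + (C_1+C_0)\bar{\omega}_k^2$, and Lemma \ref{lm: convergence} with $C = C_1+C_0$ and $u_k = 0$ concludes. (The finiteness of $n_k$, needed to make sense of $x^{k+1}$, follows from the per-draw success probability being positive, so Step 1 is not affected by what follows.)

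The genuine gap is in Step 2. You invoke the standard bounded-differences form of McDiarmid's inequality, with per-coordinate constant $C_0/N$ (Lemma \ref{lemma:easy_ineq}) and threshold gap $\epsilon = C_0\omega_k^2$ above the conditional mean. That gives a per-draw failure probability of $\exp(-2N\epsilon^2/C_0^2) = \exp(-2N\omega_k^4) = \exp(-32N/(k+2)^4)$ (your expression $-2NC_0\omega_k^4$ contains an algebra slip, but that is beside the point). The claimed bound requires a per-draw failure probability of at most $\exp(-4N/(k+2)^3)$, and $32N/(k+2)^4 \geq 4N/(k+2)^3$ holds only for $k \leq 6$; for $k \geq 7$ the bounded-differences exponent is strictly too small, so no bookkeeping can close the gap --- the ``$\omega_k^3$-type scaling'' you hope to recover is simply not attainable from this inequality. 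What is needed is the variance-type estimate \eqref{eq:prob_md1} of Theorem \ref{thm:main1} (equivalently, Corollary \ref{cor:md}): because $\hat{\mu}_i^k = (1-\omega_k)\delta_{x_i^k} + \omega_k\delta_{\bar{x}_i^k}$ is a two-point measure, the variance of $J(\cdot,x_{-i})$ under $\hat{\mu}_i^k$ is at most $\omega_k(1-\omega_k)(C_0/N)^2$, so the denominator in the exponent becomes $2\big(\omega_k(1-\omega_k)C_0^2 + \tfrac{C_0}{3}\epsilon\big)$ instead of $C_0^2$; with $\epsilon = C_0\omega_k^2$ this yields $\exp\big(-N C_0^2 \omega_k^4 / \big(2C_0^2(\omega_k - \tfrac{2}{3}\omega_k^2)\big)\big) \leq \exp(-N\omega_k^3/2) = \exp(-4N/(k+2)^3)$. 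Exploiting the small Bernoulli variance $\omega_k(1-\omega_k)$, rather than the squared range, is precisely the refinement for which the paper introduced the second estimate of Theorem \ref{thm:main1}. A final, harmless remark: your conditional geometric domination gives $\mathbb{E}[n_k] \leq \big(1 - \exp(-4N/(k+2)^3)\big)^{-1}$, which is sharper than the stated bound; the paper's exponent $-2$ comes from the cruder tail bound $\mathbb{P}[n_k = j] \leq p^{j-1}$ and $\sum_{j \geq 1} j p^{j-1} = (1-p)^{-2}$, not from any second-moment control, so once the correct value of $p$ is in hand either route finishes the proof.
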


\begin{proof}
Let $\hat{x}$ be a random variable with probability distribution equal to $\hat{\mu}^k$, conditionally to $\mathcal{G}_{(k,1,1)-1}$. Then, for all $\epsilon > 0$, estimate \eqref{eq:prob_md1} of 
Theorem \ref{thm:main1} yields:
\begin{equation} \label{eq:prob_estimate_0}
\mathbb{P} \Big[
J(\hat{x}) \geq \mathcal{J}(\hat{\mu}^k) 
+ \frac{C_1}{2N} \omega_k(1-\omega_k) + \epsilon
\, \Big| \,
\mathcal{G}_{(k,1,1)-1} \Big]
\leq
p_{\epsilon}
\end{equation}
where $p_{\epsilon}= \exp\Big(  \frac{-N \epsilon^2}{2( \omega_k(1-\omega_k) C_0^2 + \frac{C_0}{3} \epsilon )} \Big)$.
For $\epsilon= C_0 \omega_k^2$, we have
\begin{equation*}
p_\epsilon
=
\exp\left(
\frac{-NC_0^2 \omega_k^4}{2 \big( \omega_k C_0^2 - \frac{2}{3} \omega_k^2 C_0^2 \big)}
\right)
\leq
p:=
\exp \left(
\frac{-N \omega_k^3}{2}
\right)
= \exp \left(
\frac{-4N}{(k+2)^3}
\right).
\end{equation*}
Recalling that $\frac{C_1}{2N}\omega_k (1-\omega_k) \leq \frac{C_1}{2} \omega_k^2$, we deduce that
\begin{equation*}
\mathbb{P} \Big[
J(\hat{x}) \geq \mathcal{J}(\hat{\mu}^k) 
+ \Big( \frac{C_1}{2} + C_0 \Big) \omega_k^2
\, \Big| \, \mathcal{G}_{(k,1,1)-1} \Big]
\leq p.
\end{equation*}
Now, let us consider a sequence of independent random variables $(\hat{x}^{k,j})_{j=1,\ldots}$ (conditionally to $\mathcal{G}_{(k,1,1)-1}$), with conditional probability distribution $\hat{\mu}^k$.
By estimate \eqref{eq:prob_estimate_0},
\begin{equation*}
\mathbb{P} \big[ n_k = j \big]
\leq \mathbb{P} \Big[ J(\hat{x}^{k,j'}) \geq
\mathcal{J}(\hat{\mu}^k) + \Big( \frac{C_1}{2} + C_0 \Big) \omega_k^2,\
\forall j'
\,\Big|\,
\mathcal{G}_{(k,1,1)-1}
\Big]
\leq p^{j-1}.
\end{equation*}
We finally deduce that
$\mathbb{E} \big[ n_k \big]
\leq \sum_{n=1}^\infty
j p^{j-1} = \frac{1}{(1-p)^2}$,
which proves the second part of the lemma. For the first part of the lemma, it suffices to observe that
\begin{equation*}
J(x^{k+1})
\leq \mathcal{J}(\hat{\mu}^k)
+ \Big( \frac{C_1}{2} + C_0 \Big) \omega_k^2
\leq J(x^k) - \beta_k \omega_k
+ (C_1 + C_0) \omega_k^2,
\end{equation*}
and to conclude with Lemma \ref{lm: convergence}.
\end{proof}

\subsection{Distributed algorithm}

In this subsection we present a privacy-preserving implementation of Algorithm \ref{alg1+k}. The Algorithm  \ref{alg:distributed} is equivalent to Algorithm \ref{alg1+k}; the instructions are distributed over an \textbf{operator}, $N$ \textbf{agents}, a \textbf{simulator}, and an \textbf{aggregator}, who communicate with each other. Roughly speaking, the operator sets up prices that are sent to the agents, which compute independently from each other their best-response. The {aggregator} computes in a confidential fashion the aggregate associated with a given value of $(x_i)_{i=1,\ldots,N}$. The simulator implements the random variables $P_i^{j,k}$ of the Stochastic Frank-Wolfe algorithm.

%

\begin{algorithm}[H]
\caption{Distributed SFW Algorithm}
\label{alg:distributed}
\begin{algorithmic}
\STATE{[\textbf{Agents}] {Initialization:} $ x^0 \in \mathcal{X}$.}
\STATE{\textbf{[Aggregator]} Compute and send $y^0 = \frac{1}{N} \sum_{i=1}^N g_i(x_i^0)$ to \textbf{Operator}.}
\FOR{$k= 0,1,2,\ldots ,K$}
\bindent
\STATE{\textbf{[Operator] } Compute and send $\lambda^k= \nabla f(y^k)$ to the \textbf{Agents}.}
\FOR{$i=1,2,\ldots,N$}
\bindent
\STATE{\textbf{[Agent $i$] } Compute $\bar{x}_i^k \in \mathbb{S}_i (\lambda^k)$.}
\eindent
\ENDFOR
\STATE{\textbf{[Aggregator]} Compute and send $\bar{y}^k = \frac{1}{N} \sum_{i=1}^N g_i(\bar{x}_i^k)$ to \textbf{Operator}.}
\STATE{\textbf{[Operator] } Compute $\beta^k= \langle \lambda^k, y^k - \bar{y}^k \rangle$.}
\STATE{\textbf{[Operator] } Compute, send $\omega_k$ with \eqref{eq:ls2} or with $\omega_k = \frac{2}{k+2}$ to \textbf{Simulator}.}
\STATE{\textbf{[Operator]} Set $j=0$ and send $test =$ true to \textbf{Simulator}.}
\WHILE{$test$}
\bindent
\STATE{[\textbf{Operator}] Increment $j$.}
\FOR{$i=1,2,\ldots,N$}
\bindent
\STATE{\textbf{[Simulator]} Simulate and send $\probaSelec{} \sim \Bern(\omega_k)$ to \textbf{Agent} i.}
\STATE{[\textbf{Agent} $i$] Set $\hat{x}_i^{k,j} = (1- \probaSelec{} )x_i^k + \probaSelec{}  \bar{x}_i^k$. }
\eindent
\ENDFOR
\STATE{\textbf{[Aggregator] } Compute, send $\hat{y}^{k,j} = \frac{1}{N}\sum_{i=1}^N g_i(\hat{x}_i^{k,j})$ to  \textbf{Operator}.}
\STATE{[\textbf{Operator}] Update and send $test$ to \textbf{Simulator}.}
\eindent
\ENDWHILE
\STATE{\textbf{[Operator]} Find $j^{*} \in \underset{j'=1,\ldots,j}{\argmin} \ f(\hat{y}^{k,j'})$.
Set $y^{k+1} = \hat{y}^{k,j^{*}}$.}
\STATE{\textbf{[Operator]} Send $j^{*}$ to the \textbf{Agents}.}
\FOR{$i=1,2,\ldots,N$}
\bindent
\STATE{[\textbf{Agent} $i$] Set $x_i^{k+1} = \hat{x}_i^{k,j^{*}}$.}
\eindent
\ENDFOR
\eindent
\ENDFOR
%
%
%
\end{algorithmic}
\end{algorithm}

More precisely, at the beginning of iterattion $k$ of Algorithm \ref{alg:distributed}, the operator sends a price $\lambda_k$ to the agents, who calculate their best-response. The aggregator sends the corresponding aggregate $\bar{y}_k$ to the operator, who can compute the primal-dual gap $\beta^k$ and can fix the value of the stepsize $\omega_k$.
Next the simulator realizes stochastic simulations, communicated to the agents. Only the aggregate associated with each simulation, $\hat{y}^{k,j}$, is communicated to the operator. The operator decides when to stop the simulation phase through the logical variable $test$. For example, $test$ can be set to true as long as $j< n_k$, for predefined values of $n_k$. The variable test can also be designed so as to implement the stopping rule \eqref{eq:stopping_rule2} of Subsection \ref{subsec:stopping}.
Finally, the operator identifies the number $j^*$ of the simulation that has yielded the best aggregate and communicates it to the agents.

The key point in this algorithm is that the operator never receives information that is specific to a given agent: it only collects aggregates (the variables $\bar{y}^k$, $\hat{y}^{k,j}$, and $y^k$). Similarly, the agents have only access to the prices $\lambda_k$ and to $j^*$. We do not detail here algorithms used by the aggregator to compute the aggregate and refers the reader to \cite{beaude2020privacy}, which investigates  a similar approach for preserving privacy, with an operator that only has access to aggregates (note that the underlying mathematical method is different from ours). It is proposed in that reference to use a cryptographic protocol called \emph{secure multiparty computation} for the non-intrusive computation of aggregates, taken from \cite{shi2016secure} and \cite{atallah2004private}.

\color{black}

\section{Refined gap estimates} \label{sec:refine}

\subsection{Nonconvexity measure and gap estimate}
\label{subsec-refined_gaps}

We give in this subsection a refinement of the randomization gap obtained in Proposition \ref{prop:gap}. Our analysis relies on the concept of nonconvexity measure, introduced in \cite{Cassels1975}.

\begin{definition}
Given a subset $\mathcal{K}$ of $\mathcal{E}$, we call nonconvexity measure of  $\mathcal{K}$ the number $\rho(\mathcal{K})$ defined by
\begin{equation*}
\rho(\mathcal{K})
= \Bigg(
\sup_{y \in \conv(\mathcal{K})} \,
\inf_{\begin{subarray}{c} \mu \in \mathcal{P}_\delta, \\
E_{\mu} [ \mathrm{Id} ]= y \end{subarray}} \sigma_{\mu}\big[ \mathrm{Id} \big]^2
\Bigg)^{1/2},
\end{equation*}
where $\mathrm{Id} \colon \mathcal{E} \rightarrow \mathcal{E}$ denotes the identity mapping.
\end{definition}

The "nonconvexity measure” terminology is motivated by the following: if $\mathcal{K}$ is convex, then obviously $\rho(\mathcal{K})= 0$ and conversely, if $\rho(\mathcal{K})=0$, then $\mathcal{K}$ is dense into $\conv(\mathcal{K})$.
We have the following two properties, easily verified. The map $\rho$ is homogeneous in the following sense: given $a \in \R$, we have $\rho(a \mathcal{K})= |a| \rho(\mathcal{K})$. Moreover $\rho(\mathcal{K}) \leq d(\mathcal{K})$, where $d(\mathcal{K})$ is the diameter of $\mathcal{K}$. Another particularly interesting property for our aggregative problem is the sub-additivity of $\rho(\cdot)^2$: given two subsets $\mathcal{K}_1$ and $\mathcal{K}_2$, we have
$\rho(\mathcal{K}_1 + \mathcal{K}_2)^2
\leq \rho(\mathcal{K}_1)^2
+ \rho(\mathcal{K}_2)^2$, see \cite[Theorem 1]{Cassels1975}. 
We will use an improvement of this inequality in the proof of Theorem \ref{thm:new_gap}, based on the Shapley-Folkman theorem.

The next lemma provides a general relaxation estimate based on the nonconvexity measure of the feasible set. Let us emphasize that the central idea behind this result is the same as the one in the proof of Proposition \ref{prop:gap}. The only difference is the point of view, which is here geometric while it was previously probabilistic.

\begin{lemma} \label{lemma:gap_geo}
Let $\mathcal{K}$ be a subset of $\mathcal{E}$. Let $F$ be a differentiable real-valued function defined on some neighborhood of $\conv(\mathcal{K})$. Assume that $\nabla F$ is $\tilde{L}$-Lipschitz continuous over $\conv(\mathcal{K})$. Then,
\begin{equation*}
\inf_{y \in \mathcal{K}} F(y)
\leq
\Big( \inf_{y \in \conv(\mathcal{K})} F(y)
\Big)
+ \frac{\tilde{L}}{2} \rho(\mathcal{K})^2.
\end{equation*}
\end{lemma}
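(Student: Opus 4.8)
The plan is to mirror the probabilistic argument of Proposition \ref{prop:gap}, but phrased geometrically through the defining property of the nonconvexity measure $\rho$. Since neither infimum is assumed to be attained, I would work with approximate minimizers and send a tolerance to zero at the very end.

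First I would fix $\epsilon > 0$ and pick $y \in \conv(\mathcal{K})$ with $F(y) \leq \big( \inf_{z \in \conv(\mathcal{K})} F(z) \big) + \epsilon$. By the definition of $\rho(\mathcal{K})$ as a supremum over $y' \in \conv(\mathcal{K})$ of an infimum of variances, the quantity $\inf \sigma_{\mu}[\mathrm{Id}]^2$ taken over $\mu \in \mathcal{P}_\delta(\mathcal{K})$ with $E_{\mu}[\mathrm{Id}] = y$ is bounded by $\rho(\mathcal{K})^2$. Hence there exists a finitely supported measure $\mu = \sum_l \lambda_l \delta_{y_l}$, with each $y_l \in \mathcal{K}$, satisfying $E_{\mu}[\mathrm{Id}] = y$ and $\sigma_{\mu}[\mathrm{Id}]^2 \leq \rho(\mathcal{K})^2 + \epsilon$.

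The key step is the standard quadratic upper bound coming from the $\tilde{L}$-Lipschitz continuity of $\nabla F$ on the convex set $\conv(\mathcal{K})$: for each support point $y_l$, since both $y$ and $y_l$ lie in $\conv(\mathcal{K})$,
\[
F(y_l) \leq F(y) + \langle \nabla F(y), y_l - y \rangle + \frac{\tilde{L}}{2} \| y_l - y \|^2.
\]
Taking the $\mu$-expectation of this inequality, the first-order term vanishes because $y = E_{\mu}[\mathrm{Id}]$ is the barycenter of $\mu$, so that $\sum_l \lambda_l \langle \nabla F(y), y_l - y \rangle = \langle \nabla F(y), E_{\mu}[\mathrm{Id}] - y \rangle = 0$, while the quadratic term is exactly $\sigma_{\mu}[\mathrm{Id}]^2$. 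This yields
\[
E_{\mu}[F] \leq F(y) + \frac{\tilde{L}}{2} \sigma_{\mu}[\mathrm{Id}]^2 \leq \Big( \inf_{z \in \conv(\mathcal{K})} F(z) \Big) + \epsilon + \frac{\tilde{L}}{2} \big( \rho(\mathcal{K})^2 + \epsilon \big).
\]

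To close, I would note that since every support point $y_l$ belongs to $\mathcal{K}$, the weighted average $E_{\mu}[F] = \sum_l \lambda_l F(y_l)$ dominates $\min_l F(y_l)$, which is in turn at least $\inf_{z \in \mathcal{K}} F(z)$; hence $\inf_{z \in \mathcal{K}} F(z) \leq E_{\mu}[F]$. Combining this with the previous display and letting $\epsilon \to 0$ gives the claimed bound. I expect the only delicate point to be the bookkeeping of the two approximation errors — one for the near-minimality of $y$ in $\conv(\mathcal{K})$, one for the near-optimality of the representing measure $\mu$ — since neither infimum need be attained; the quadratic upper bound itself is routine, obtained by integrating $\nabla F$ along the segment $[y, y_l] \subseteq \conv(\mathcal{K})$.
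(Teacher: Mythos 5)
Your proof is correct and follows essentially the same route as the paper's: both bound $\inf_{y' \in \mathcal{K}} F(y')$ by $E_{\mu}[F]$, apply the quadratic upper bound from the $\tilde{L}$-Lipschitz gradient so that the first-order term vanishes at the barycenter and the quadratic term becomes $\sigma_{\mu}^2[\mathrm{Id}]$, and then optimize over $\mu$ and over $y$. The only difference is presentational: the paper takes arbitrary $y$ and $\mu$ and minimizes at the end, whereas you make the same optimization explicit via $\epsilon$-approximate minimizers, which is a harmless (and slightly more careful) rendering of the identical argument.
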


\begin{proof}
Let $y \in \conv(\mathcal{K})$. Let $\mu \in \mathcal{P}_{\delta}(\mathcal{K})$ be such that $E_{\mu}[\mathrm{Id}]= y$. Then, since $\nabla F$ is $\tilde{L}$-Lipschitz continuous, we have
\begin{equation*}
\inf_{y' \in \mathcal{K}} F(y')
\leq E_{\mu}[F]
\leq F(y) + \frac{\tilde{L}}{2} \sigma_{\mu}^2\big[ \mathrm{Id} \big].
\end{equation*}
Minimizing the right-hand side with respect to $\mu$, we obtain that
\begin{equation*}
\inf_{y' \in \mathcal{K}} F(y')
\leq F(y) + \frac{\tilde{L}}{2} \rho(\mathcal{K})^2.
\end{equation*}
Minimizing the result with respect to $y$ yields the announced estimate.
\end{proof}

Some notations are needed for the application of Lemma \ref{lemma:gap_geo} to \eqref{pb:aggre}. We set
\begin{equation*}
\begin{array}{ll}
\tilde{g}_{ij}(x_i)
= \sqrt{\tilde{L}_j} \, g_{ij}(x_i),
\qquad & \tilde{g}_i(x_i)
=
\big( \tilde{g}_{ij} (x_i) \big)_{j=1,\ldots,M}
\\[1em]
\tilde{f}_j(y_j)= f_j \Big( \tfrac{y_j}{ \sqrt{\tilde{L}_j} } \Big),
& \tilde{f}(y)={\displaystyle \sum_{j=1}^M \tilde{f}_j(y_j).}
\end{array}
\end{equation*}
Obviously,
$J(x)= \tilde{f} \big( \frac{1}{N} \sum_{i=1}^N \tilde{g}_i(x_i) \big)
= \sum_{j=1}^{M}\tilde{f}_j \big( \frac{1}{N}\sum_{i=1}^{N} \tilde{g}_{ij}(x_i) \big)$.
Finally we denote
\begin{equation*}
\mathcal{Y}_i= \tilde{g}_i(\mathcal{X}_i)
\quad
\text{and}
\quad
\mathcal{Y}= \frac{1}{N} \sum_{i=1}^{N} \mathcal{Y}_i.
\end{equation*}

We give next two new formulations of problems \eqref{pb:aggre} and \eqref{pb:aggrer}, revealing the geometric nature of the relaxation technique employed so far.

\begin{lemma} \label{lemma:gap_estimate}
We have
\begin{align*}
J^*= & \ \inf_{y \in \mathcal{Y}} \ \tilde{f} (y), \tag{PG} \\[1em]
\mathcal{J}^*= & \ \inf_{y \in \conv(\mathcal{Y})} \ \tilde{f}(y). \tag{PGR} \label{pb:rel_geo}
\end{align*}
\end{lemma}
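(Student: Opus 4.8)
The plan is to prove both identities by first rewriting the objectives in terms of the rescaled data $\tilde{f}$, $\tilde{g}_i$, and then identifying the sets over which the aggregate ranges. For the first identity, I would argue by a change of variables. Recall the identity $J(x)=\tilde{f}\big(\frac{1}{N}\sum_{i=1}^N \tilde{g}_i(x_i)\big)$ stated just before the lemma. As $x=(x_1,\ldots,x_N)$ runs over $\mathcal{X}=\prod_i \mathcal{X}_i$, the aggregate $\frac{1}{N}\sum_i \tilde{g}_i(x_i)$ runs over exactly $\frac{1}{N}\sum_i \tilde{g}_i(\mathcal{X}_i)=\frac{1}{N}\sum_i \mathcal{Y}_i=\mathcal{Y}$, by the very definition of the Minkowski sum. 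Hence $J^*=\inf_{x\in\mathcal{X}}J(x)=\inf_{y\in\mathcal{Y}}\tilde{f}(y)$, which is (PG).

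For the second identity I would start by checking that $\mathcal{J}(\mu)=\tilde{f}\big(\frac{1}{N}\sum_i E_{\mu_i}[\tilde{g}_i]\big)$. This follows coordinatewise: since $E_{\mu_i}[\tilde{g}_{ij}]=\sqrt{\tilde{L}_j}\,E_{\mu_i}[g_{ij}]$ and $\tilde{f}_j(\cdot)=f_j(\cdot/\sqrt{\tilde{L}_j})$, the two scalings cancel, so that $f_j\big(\frac{1}{N}\sum_i E_{\mu_i}[g_{ij}]\big)=\tilde{f}_j\big(\frac{1}{N}\sum_i E_{\mu_i}[\tilde{g}_{ij}]\big)$; summing over $j$ gives the claim. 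It then remains to identify the image of the map $\mu\mapsto \frac{1}{N}\sum_i E_{\mu_i}[\tilde{g}_i]$ as $\mu$ ranges over $\mathcal{P}_{\delta}$. Since any $\mu_i=\sum_l \lambda_l \delta_{x_l}\in\mathcal{P}_{\delta}(\mathcal{X}_i)$ yields $E_{\mu_i}[\tilde{g}_i]=\sum_l \lambda_l \tilde{g}_i(x_l)$, a finite convex combination of points of $\mathcal{Y}_i$, and conversely every such convex combination arises in this way, the set $\{E_{\mu_i}[\tilde{g}_i]\mid \mu_i\in\mathcal{P}_{\delta}(\mathcal{X}_i)\}$ equals $\conv(\mathcal{Y}_i)$. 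Consequently the image in question is $\frac{1}{N}\sum_i \conv(\mathcal{Y}_i)$.

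The last step, and the only one requiring a small geometric fact, is to recognize that $\frac{1}{N}\sum_i \conv(\mathcal{Y}_i)=\conv(\mathcal{Y})$. Here I would invoke the commutation of the convex hull with the Minkowski sum recalled in the Notations, namely $\conv(\mathcal{A}+\mathcal{B})=\conv(\mathcal{A})+\conv(\mathcal{B})$, together with the homogeneity $\conv(\lambda\mathcal{A})=\lambda\conv(\mathcal{A})$, to obtain $\conv(\mathcal{Y})=\conv\big(\frac{1}{N}\sum_i \mathcal{Y}_i\big)=\frac{1}{N}\sum_i \conv(\mathcal{Y}_i)$. Combining this with the previous paragraph gives $\mathcal{J}^*=\inf_{\mu\in\mathcal{P}_{\delta}}\mathcal{J}(\mu)=\inf_{y\in\conv(\mathcal{Y})}\tilde{f}(y)$, which is (PGR).

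I do not anticipate a genuine obstacle: the argument is essentially bookkeeping, and the delicate points are merely to apply the correspondence between finitely supported measures and finite convex combinations, and then the convex-hull/Minkowski-sum interchange, in the right order. The single conceptual ingredient is that randomizing over each $\mathcal{X}_i$ exactly convexifies the per-agent image $\mathcal{Y}_i$, so that randomization of the whole problem convexifies the aggregate feasible set $\mathcal{Y}$; this is precisely the geometric content that makes the relaxation by randomization equivalent to the geometric relaxation \eqref{pb:rel_geo}.
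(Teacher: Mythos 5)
Your proof is correct and follows essentially the same route as the paper's: the paper likewise treats (PG) as immediate and establishes (PGR) via the two observations $\conv(\mathcal{Y})=\frac{1}{N}\sum_{i=1}^N\conv(\mathcal{Y}_i)$ and $\conv(\mathcal{Y}_i)=\{E_{\mu_i}[\tilde{g}_i]\mid\mu_i\in\mathcal{P}_{\delta}(\mathcal{X}_i)\}$. You merely spell out the bookkeeping (the cancellation of the $\sqrt{\tilde{L}_j}$ scalings and the measure/convex-combination correspondence) that the paper leaves implicit.
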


\begin{proof}
The first equality is straightforward. For the second one, it suffices to observe that
$\conv(\mathcal{Y})= \frac{1}{N} \sum_{i=1}^N \conv(\mathcal{Y}_i)$
and that
$\conv(\mathcal{Y}_i)= \big\{ E_{\mu_i}[\tilde{g}_i] \mid \mu_i \in \mathcal{P}_{\delta}(\mathcal{X}_i) \big\}$.
\end{proof}

We introduce the following constants:
\begin{equation} \label{eq:D_i}
D_i = \sum_{j=1}^M \tilde{L}_j d^2_{ij}, \qquad
D[k]= \max_{
\begin{subarray}{c}
K \subseteq \{ 1,\ldots,N \} \\
|K| = k
\end{subarray}
} \ \sum_{i \in K} D_i. 
\end{equation}

\begin{theorem} \label{thm:new_gap}
Let Assumption \ref{ass1} hold true.
It holds:
\begin{equation} \label{eq:gap_estim_sf}
J^* - \mathcal{J}^*
\leq
\frac{1}{2N^2} \,
\Bigg(
\max_{
\begin{subarray}{c}
Q \subseteq \{ 1,\ldots,N \} \\
|Q|= q \wedge N
\end{subarray}
} \
\sum_{i \in Q} \rho(\mathcal{Y}_i)^2
\Bigg)
\leq \frac{D[q \wedge N]}{2N^2}.
\end{equation}
\end{theorem}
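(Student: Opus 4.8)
The plan is to reduce the statement to the geometric relaxation of Lemma~\ref{lemma:gap_estimate}, apply the one-shot estimate of Lemma~\ref{lemma:gap_geo}, and then sharpen the naive subadditivity of $\rho(\cdot)^2$ using the Shapley--Folkman theorem. First I would record that the rescaling $\tilde g_{ij}=\sqrt{\tilde L_j}\,g_{ij}$, $\tilde f_j(y_j)=f_j(y_j/\sqrt{\tilde L_j})$ is tailored so that $\nabla \tilde f$ is $1$-Lipschitz on $\conv(\mathcal Y)$: indeed $\nabla \tilde f_j(y_j)=\tilde L_j^{-1/2}\nabla f_j(y_j/\sqrt{\tilde L_j})$, so by Assumption~\ref{ass1}.3 one has $\|\nabla \tilde f_j(y_j)-\nabla \tilde f_j(y_j')\|\le \tilde L_j^{-1/2}\cdot\tilde L_j\cdot\tilde L_j^{-1/2}\|y_j-y_j'\|=\|y_j-y_j'\|$, and summing over the orthogonal blocks $\mathcal E_j$ keeps the modulus equal to $1$. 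By Lemma~\ref{lemma:gap_estimate}, $J^*=\inf_{\mathcal Y}\tilde f$ and $\mathcal J^*=\inf_{\conv(\mathcal Y)}\tilde f$, so Lemma~\ref{lemma:gap_geo} applied with $\mathcal K=\mathcal Y$, $F=\tilde f$, $\tilde L=1$ gives $J^*-\mathcal J^*\le \tfrac12\,\rho(\mathcal Y)^2$.

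By homogeneity of $\rho$ and $\mathcal Y=\frac1N\sum_{i=1}^N\mathcal Y_i$, this equals $\tfrac{1}{2N^2}\,\rho\big(\sum_{i=1}^N\mathcal Y_i\big)^2$, so everything comes down to proving the Shapley--Folkman-improved bound
\begin{equation*}
\rho\Big(\sum_{i=1}^N\mathcal Y_i\Big)^2
\le
\max_{\substack{Q\subseteq\{1,\ldots,N\}\\ |Q|=q\wedge N}}\ \sum_{i\in Q}\rho(\mathcal Y_i)^2 .
\end{equation*}
To establish it I would fix $y\in\conv\big(\sum_i\mathcal Y_i\big)=\sum_i\conv(\mathcal Y_i)$ and invoke the Shapley--Folkman theorem \cite{Starr1969}, which yields a decomposition $y=\sum_{i=1}^N y_i$ with $y_i\in\conv(\mathcal Y_i)$ for every $i$ and $y_i\in\mathcal Y_i$ for all $i$ outside an index set $Q$ with $|Q|\le q\wedge N$. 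For $i\in Q$ the definition of $\rho$ provides, given $\epsilon>0$, a measure $\mu_i\in\mathcal P_\delta(\mathcal Y_i)$ with $E_{\mu_i}[\mathrm{Id}]=y_i$ and $\sigma_{\mu_i}^2[\mathrm{Id}]\le \rho(\mathcal Y_i)^2+\epsilon$; for $i\notin Q$ I set $\mu_i=\delta_{y_i}$.

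Drawing independent $Z_i\sim\mu_i$ and letting $\mu$ be the (finitely supported) law of $Z=\sum_i Z_i$, one checks that $\mu\in\mathcal P_\delta\big(\sum_i\mathcal Y_i\big)$ with $E_\mu[\mathrm{Id}]=\sum_i y_i=y$, and by independence the variances add: $\sigma_\mu^2[\mathrm{Id}]=\sum_i\sigma_{\mu_i}^2[\mathrm{Id}]=\sum_{i\in Q}\sigma_{\mu_i}^2[\mathrm{Id}]\le \sum_{i\in Q}\rho(\mathcal Y_i)^2+|Q|\epsilon$. Since $|Q|\le q\wedge N$, the sum over $Q$ is at most the maximal sum over $q\wedge N$ indices; letting $\epsilon\to0$ and taking the supremum over $y$ gives the displayed bound, hence the first inequality of \eqref{eq:gap_estim_sf}. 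The second inequality then follows from $\rho(\mathcal Y_i)^2\le d(\mathcal Y_i)^2\le \sum_{j=1}^M\tilde L_j d_{ij}^2=D_i$ (using $d(\mathcal Y_i)^2=\sup_{x_i,x_i'}\sum_j\tilde L_j\|g_{ij}(x_i)-g_{ij}(x_i')\|^2\le\sum_j\tilde L_j d_{ij}^2$), so that $\max_{|Q|=q\wedge N}\sum_{i\in Q}\rho(\mathcal Y_i)^2\le D[q\wedge N]$.

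The main obstacle is the Shapley--Folkman step: correctly invoking the theorem with the $\min(q,N)$ bound on the number of ``non-extreme'' coordinates, and then assembling the independent product measure while verifying that its support stays in the Minkowski sum and that the total variance is exactly the sum of the component variances. A minor point to flag is the infinite-dimensional case $q=+\infty$: there Shapley--Folkman gives no gain, but $q\wedge N=N$ and the estimate gracefully reduces to the plain subadditivity $\rho\big(\sum_i\mathcal Y_i\big)^2\le\sum_{i=1}^N\rho(\mathcal Y_i)^2$, so the statement remains valid.
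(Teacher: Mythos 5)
Your proposal is correct, and its overall architecture coincides with the paper's proof: reduce via Lemma \ref{lemma:gap_estimate} to the geometric formulation, apply Lemma \ref{lemma:gap_geo} with the $1$-Lipschitz gradient of $\tilde f$, use homogeneity of $\rho$, and finish with the diameter bound $\rho(\mathcal{Y}_i)^2 \leq D_i$. The one genuine difference is the treatment of the key inequality
$\rho\big(\sum_{i=1}^N \mathcal{Y}_i\big)^2 \leq \max_{|Q| = q \wedge N} \sum_{i \in Q} \rho(\mathcal{Y}_i)^2$:
the paper simply cites \cite[Theorem 2]{Cassels1975}, whereas you prove it from scratch by combining the Shapley--Folkman decomposition with a product-measure construction in which independence makes the Hilbert-space variances add, $\sigma_\mu^2[\mathrm{Id}] = \sum_i \sigma_{\mu_i}^2[\mathrm{Id}]$. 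This argument is sound (the cross terms vanish by independence, the support of the law of $\sum_i Z_i$ lies in the Minkowski sum, and passing from $|Q| \leq q \wedge N$ to the maximum over sets of size exactly $q \wedge N$ is legitimate by nonnegativity of $\rho^2$), and it buys a self-contained proof that also degenerates gracefully to plain subadditivity when $q = +\infty$, where $Q$ can be taken to be all of $\{1,\ldots,N\}$ and Shapley--Folkman is not needed; the price is only length, since you are in effect reproving Cassels' theorem in the exact form the paper invokes.
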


Note that $D[N]= N C_1$, thus the new gap estimate is the same as the one obtained in Proposition \ref{prop:gap} when $q \geq N$ and it is strictly better when $q < N$.

\begin{proof}[Proof of Theorem \ref{thm:new_gap}]
We let the reader verify that $\nabla \tilde{f}$ is 1-Lipschitz. Then Lemma \ref{lemma:gap_estimate} and the homogeneity of $\rho$ yield
\begin{equation*}
J^* - \mathcal{J}^*
\leq \frac{1}{2} \rho(\mathcal{Y})^2
\leq \frac{1}{2N^2} \, \rho \Bigg( \sum_{i=1}^N \mathcal{Y}_i \Bigg)^2.
\end{equation*}
Applying \cite[Theorem 2]{Cassels1975}, we obtain that
\begin{equation*}
\rho \Bigg( \sum_{i=1}^N \mathcal{Y}_i \Bigg)^2
\leq
\max_{
\begin{subarray}{c}
Q \subseteq \{ 1,\ldots,N \} \\
|Q|= q \wedge N
\end{subarray}
} \,
\sum_{i \in Q} \rho(\mathcal{Y}_i)^2,
\end{equation*}
which proves the first inequality. Observing that
\begin{equation*}
\rho(\mathcal{Y}_i)^2
\leq d(\mathcal{Y}_i)^2
\leq \sum_{j=1}^M
d \big( \tilde{g}_{ij} ( \mathcal{X}_i ) \big)^2
= \sum_{j=1}^M \tilde{L}_j
d \big( g_{ij} ( \mathcal{X}_i ) \big)^2
= D_i,
\end{equation*}
we obtain the second inequality.
\end{proof}

\subsection{Duality and price of decentralization}

In this subsection we introduce a dual problem (we work again under Assumption \ref{ass2}) and investigate its connection with the geometric relaxed problem \eqref{pb:rel_geo}. This allows us to obtain a last refinement of the randomization gap.
For all $i=1,\ldots,N$ and for all $\lambda \in \mathcal{E}$, we introduce
\begin{align*}
\Phi_i(\lambda)=
\inf_{x_i \in \mathcal{X}_i} \langle \lambda, \tilde{g}_i(x_i) \rangle,
\qquad
\mathcal{Y}_i(\lambda)=
\underset{y_i \in \mathcal{Y}_i}{\text{argmin}} \, \langle \lambda, y_i \rangle,
\qquad
\mathcal{X}_i(\lambda)=
\underset{x_i \in \mathcal{X}_i}{\text{argmin}} \, \langle \lambda, \tilde{g}_i(x_i) \rangle.
\end{align*}
We refer to the following problem as the dual problem:
\begin{equation*} \tag{D} \label{pb:dual}
\sup_{\lambda \in \mathcal{E}}
\
\Big(
- \tilde{f}^*(\lambda)
+ \frac{1}{N} \sum_{i=1}^N \Phi_i(\lambda) \Big). 
\end{equation*}
Let $\mathcal{D}^*$ denote the value of Problem \eqref{pb:dual}.

\begin{assumption} \label{ass:closedness}
{The function $f \colon \mathcal{E}\rightarrow \mathbb{R}$ is lower semi-continuous and convex, and the set $\conv(\mathcal{Y})$ is closed. }
\end{assumption}

\begin{remark}
Assume that $\mathcal{E}$ is finite-dimensional.
If the sets $\mathcal{X}_i$ are compact and the maps $\tilde{g}_i$ continuous, then the sets $\mathcal{Y}_i= \tilde{g}_i(\mathcal{X}_i)$ are also compact. It is then easy to verify with Carathéodory's theorem that $\conv(\mathcal{Y}_i)$ is also compact, thus closed, which finally implies Assumption \ref{ass:closedness}.
\end{remark}

\begin{lemma}
The problem \eqref{pb:rel_geo} has a solution.
\end{lemma}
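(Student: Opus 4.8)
The plan is to prove attainment of the infimum in \eqref{pb:rel_geo} by a Weierstrass-type argument carried out in the \emph{weak} topology of $\mathcal{E}$, since in the possibly infinite-dimensional setting ($q=+\infty$ is allowed) closed bounded sets need not be strongly compact. First I would record that the feasible set $\conv(\mathcal{Y})$ is nonempty, convex, closed, and bounded. Nonemptiness and convexity are immediate. Closedness is exactly Assumption \ref{ass:closedness}. For boundedness, I would use the identity $\conv(\mathcal{Y})= \frac{1}{N}\sum_{i=1}^N \conv(\mathcal{Y}_i)$ established in the proof of Lemma \ref{lemma:gap_estimate}, together with the estimate already computed in the proof of Theorem \ref{thm:new_gap}, namely $d(\mathcal{Y}_i)^2 \leq \sum_{j=1}^M \tilde{L}_j d_{ij}^2 = D_i < \infty$ under Assumption \ref{ass1}. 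Since the convex hull of a set has the same diameter as the set and the diameter of a Minkowski sum is bounded by the sum of the diameters, $\conv(\mathcal{Y})$ has finite diameter, hence is bounded.

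Next I would pass to the weak topology. Because $\mathcal{E}$ is a finite product of Hilbert spaces it is itself a Hilbert space, in particular reflexive. In a reflexive Banach space every bounded, closed, convex set is weakly compact: closedness together with convexity gives weak closedness by Mazur's theorem, while reflexivity (Kakutani's theorem) ensures that closed balls are weakly compact, so a bounded weakly closed set is a weakly closed subset of a weakly compact ball and is therefore itself weakly compact. Hence $\conv(\mathcal{Y})$ is weakly compact.

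Then I would check that $\tilde{f}$ is weakly lower semicontinuous on $\conv(\mathcal{Y})$. The function $\tilde{f}(y)= \sum_{j=1}^M f_j\big( y_j/\sqrt{\tilde{L}_j} \big)$ is convex, being a sum of compositions of the convex functions $f_j$ (Assumption \ref{ass2}) with linear maps; moreover it is finite and strongly continuous on $\conv(\mathcal{Y})$, since on this set the argument of each $f_j$ ranges within $\conv(S_j)$, where $f_j$ is Lipschitz by Assumption \ref{ass1}. A convex function that is finite and strongly continuous is strongly lower semicontinuous, and a convex, strongly lower semicontinuous function is weakly lower semicontinuous. Finally, a weakly lower semicontinuous function attains its infimum on a weakly compact set, which produces a minimizer $y^* \in \conv(\mathcal{Y})$ of $\tilde{f}$, i.e.\@ a solution of \eqref{pb:rel_geo}.

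The only genuinely delicate point is the infinite-dimensional nature of $\mathcal{E}$: one cannot invoke the classical Weierstrass theorem directly, since boundedness and closedness do not yield norm-compactness. The crux is therefore to combine reflexivity of the Hilbert space $\mathcal{E}$ with the equivalence between closedness-plus-convexity and weak closedness (Mazur) to obtain weak compactness, and to upgrade the strong continuity of the convex function $\tilde{f}$ to weak lower semicontinuity.
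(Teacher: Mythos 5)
Your proof is correct and is essentially the paper's argument in expanded form: the paper disposes of the lemma by directly citing \cite[Theorem 11.9]{bauschke2011convex} (attainment of the infimum of a proper lower semicontinuous convex function over a nonempty bounded closed convex subset of a Hilbert space), and the proof of that cited theorem is precisely the reflexivity/Mazur weak-compactness plus weak lower semicontinuity argument you spelled out. Your verification of the hypotheses — convexity and continuity of $\tilde{f}$ from Assumptions \ref{ass1} and \ref{ass2}, closedness of $\conv(\mathcal{Y})$ from Assumption \ref{ass:closedness}, and boundedness from the finite diameters $d_{ij}$ — is exactly what the citation implicitly requires.
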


\begin{proof}
This is a direct application of \cite[Theorem 11.9]{bauschke2011convex}.
\end{proof}

The next lemma provides a duality result and a characterization of optimal solutions for problem \eqref{pb:rel_geo}.

\begin{lemma} \label{lemma:duality}
Let Assumptions \ref{ass1}, \ref{ass2}, \ref{ass3}, and \ref{ass:closedness} hold true.
Then, $\mathcal{J}^*= \mathcal{D}^*$ and the dual problem \eqref{pb:dual} has at least one solution.
Fix a solution $\lambda$ to Problem \eqref{pb:dual}. Let $y \in \mathcal{E}$. Then, $y$ is a solution to \eqref{pb:rel_geo} if and only if
$y \in \partial \tilde{f}^*(\lambda)$ and $ y \in \frac{1}{N} \sum_{i=1}^N \conv\big( \mathcal{Y}_i(\lambda) \big)$.
\end{lemma}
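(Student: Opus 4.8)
The plan is to establish the Fenchel--Rockafellar duality between \eqref{pb:rel_geo} and \eqref{pb:dual}, and then to read off the optimality conditions from the subdifferential calculus at a primal-dual optimal pair. First I would rewrite the geometric relaxed problem \eqref{pb:rel_geo} as the unconstrained convex minimization
\begin{equation*}
\mathcal{J}^* = \inf_{y \in \mathcal{E}} \ \big( \tilde{f}(y) + \iota_{\conv(\mathcal{Y})}(y) \big),
\end{equation*}
where $\iota_{\conv(\mathcal{Y})}$ is the indicator function of the closed convex set $\conv(\mathcal{Y})$ (closed by Assumption \ref{ass:closedness}, so the indicator is proper, convex, and lower semicontinuous). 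The support function of $\conv(\mathcal{Y})$ is exactly the concave term in the dual: using $\conv(\mathcal{Y}) = \frac{1}{N} \sum_{i=1}^N \conv(\mathcal{Y}_i)$ from Lemma \ref{lemma:gap_estimate}, together with the fact that the support function of a Minkowski sum is the sum of the support functions, one gets
$\iota_{\conv(\mathcal{Y})}^*(-\lambda)= \sup_{y \in \conv(\mathcal{Y})} \langle -\lambda, y\rangle = -\frac{1}{N} \sum_{i=1}^N \inf_{y_i \in \conv(\mathcal{Y}_i)} \langle \lambda, y_i \rangle = -\frac{1}{N} \sum_{i=1}^N \Phi_i(\lambda)$,
where the last equality uses that minimizing a linear form over $\conv(\mathcal{Y}_i)$ equals minimizing it over $\mathcal{Y}_i= \tilde{g}_i(\mathcal{X}_i)$, which is precisely $\Phi_i(\lambda)$. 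Since $\tilde f$ is convex (Assumption \ref{ass2}) with conjugate $\tilde f^*$, the Fenchel--Rockafellar dual of the above splitting is exactly \eqref{pb:dual}.

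Second I would verify strong duality, i.e.\ $\mathcal{J}^*= \mathcal{D}^*$ with dual attainment. The clean way is to invoke a Fenchel duality theorem (e.g.\ \cite[Theorem 11.9 or the Attouch--Br\'ezis qualification]{bauschke2011convex}): it suffices to check a constraint qualification such as $\conv(\mathcal{Y}) \cap \mathrm{int}(\mathrm{dom}\,\tilde f) \neq \emptyset$, or to argue via the coercivity/continuity of $\tilde f$ on a neighborhood of $\conv(\mathcal{Y})$ guaranteed by Assumption \ref{ass1} (items \ref{ass1.2}--\ref{ass1.3}), which makes $\tilde f$ finite and continuous there. Because the preceding lemma already grants a primal solution $y^\star \in \conv(\mathcal{Y})$, no primal attainment needs to be argued; the qualification then yields both $\mathcal{J}^*=\mathcal{D}^*$ and existence of a maximizer $\lambda$ of \eqref{pb:dual}.

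Third, with a primal optimal $y$ and a dual optimal $\lambda$ in hand, I would characterize optimality through the zero-duality-gap extremality conditions. At optimality, the Fenchel inequality $\tilde f(y) + \tilde f^*(\lambda') \geq \langle \lambda', y\rangle$ must hold as equality for the appropriate sign of the multiplier, which is equivalent to $y \in \partial \tilde f^*(\lambda)$ (equivalently $\lambda \in \partial \tilde f(y)$); simultaneously, $y$ must minimize $\langle \lambda, \cdot\rangle$ over $\conv(\mathcal{Y})$, i.e.\ $\langle \lambda, y\rangle = \frac{1}{N}\sum_{i=1}^N \Phi_i(\lambda)$. I would then translate this second condition back to the claimed membership: minimizing $\langle \lambda, \cdot \rangle$ over $\conv(\mathcal{Y}) = \frac{1}{N}\sum_i \conv(\mathcal{Y}_i)$ forces each component to be a minimizer over $\conv(\mathcal{Y}_i)$, and the set of minimizers of a linear form over $\conv(\mathcal{Y}_i)$ is exactly $\conv\big(\mathcal{Y}_i(\lambda)\big)$ (the convex hull of the face of minimizers of $\mathcal{Y}_i$). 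This gives $y \in \frac{1}{N}\sum_{i=1}^N \conv\big(\mathcal{Y}_i(\lambda)\big)$, and conversely any $y$ satisfying both displayed conditions saturates the Fenchel--Young inequality summed over the splitting, hence is optimal.

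The main obstacle I expect is the careful bookkeeping in the third step: showing that the argmin set of a linear functional over a Minkowski-sum-of-convex-hulls decomposes exactly as $\frac{1}{N}\sum_i \conv(\mathcal{Y}_i(\lambda))$, i.e.\ that $\argmin_{y \in \conv(A+B)} \langle \lambda, y\rangle = \argmin_{\conv A}\langle\lambda,\cdot\rangle + \argmin_{\conv B}\langle\lambda,\cdot\rangle$ and that $\argmin_{\conv(\mathcal{Y}_i)}\langle\lambda,\cdot\rangle = \conv(\mathcal{Y}_i(\lambda))$. The first identity is a standard separation/support-function argument; the second requires that the minimum of $\langle\lambda,\cdot\rangle$ over $\conv(\mathcal{Y}_i)$ is attained exactly on the convex hull of the minimizers over $\mathcal{Y}_i$, which follows because a linear form attains its infimum over a convex hull on the hull of the points attaining it. Handling potential non-attainment subtleties (when $\mathcal{Y}_i$ is not compact) is where I would be most careful, but Assumption \ref{ass3} guarantees each $\Phi_i(\lambda)$ is attained, so $\mathcal{Y}_i(\lambda)$ is nonempty and the decomposition goes through cleanly.
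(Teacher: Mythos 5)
Your proposal is correct and takes essentially the same route as the paper's proof: the paper also writes \eqref{pb:rel_geo} as $\inf_y \tilde{f}(y) + h(y)$ with $h$ the indicator of $\conv(\mathcal{Y})$ (lsc by Assumption \ref{ass:closedness}, with $\tilde f$ finite near $\conv(\mathcal{Y})$ by Assumption \ref{ass1}), applies Fenchel--Rockafellar to get $\mathcal{J}^*=\mathcal{D}^*$ with dual attainment, identifies $-h^*(-\lambda)=\frac{1}{N}\sum_i \Phi_i(\lambda)$, and characterizes primal solutions via the two Fenchel--Young equalities, concluding with exactly the decomposition $\argmin_{\conv(\mathcal{Y})}\langle\lambda,\cdot\rangle = \conv\big(\argmin_{\mathcal{Y}}\langle\lambda,\cdot\rangle\big) = \frac{1}{N}\sum_i \conv(\mathcal{Y}_i(\lambda))$ that you flag as the delicate bookkeeping step.
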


\begin{proof}
Let $h$ denote the indicatrix function of $\conv(\mathcal{Y})$. By Assumption \ref{ass1}, the domain of $\tilde{f}$ contains a neighborhood of $\conv(\mathcal{Y})$. By Assumption \ref{ass:closedness}, $h$ is lower semi-continuous. Therefore, the Fenchel-Rockafellar theorem \cite{Rockafellar2015} applies and yields
\begin{equation*}
\mathcal{J}^*
= \inf_{y \in \mathcal{E}} \, \Big( f(y) + h(y) \Big)
= \sup_{\lambda \in \mathcal{E}}
\Big( -\tilde{f}^*(\lambda) - h^*(-\lambda) \Big).
\end{equation*}
Moreover, the supremum in the right-hand side is a maximum.
We have
\begin{equation*}
-h^*(-\lambda)
= \inf_{y \in \conv(\mathcal{Y})} \,
\langle \lambda, y \rangle
=
\inf_{y \in \mathcal{Y}} \, \langle \lambda, y \rangle
= \frac{1}{N} \sum_{i=1}^N \Phi_i(\lambda).
\end{equation*}
As a consequence, $\mathcal{J}^*=\mathcal{D}^*$ and problem \eqref{pb:dual} has at least one solution.

Now let us fix a solution $\lambda$ to the dual problem \eqref{pb:dual}.
Let $y \in \mathcal{E}$.
Then $y$ is a solution if and only if (i) $\tilde{f}(y) + \tilde{f}^*(\lambda) =
\langle \lambda, y \rangle$ and (ii) $h(y) + h^*(-\lambda)= - \langle \lambda, y \rangle$.
The condition (i) is equivalent to $y \in \partial \tilde{f}(\lambda)$. The condition (ii) is equivalent to
\begin{equation*}
y \in \conv(\mathcal{Y}) \text{ and }
\langle \lambda, y \rangle
= - h^*(-\lambda)
= \inf_{y' \in \mathcal{Y}} \langle \lambda, y' \rangle.
\end{equation*}
Thus $\text{(ii)} \Longleftrightarrow y \in Y$, where
$Y = \underset{y' \in \conv(\mathcal{Y})}{\text{argmin}} \langle \lambda, y' \rangle$. We further have
\begin{equation*}
Y = \conv \Bigg(
\underset{y' \in \mathcal{Y}}{\text{argmin}} \ \langle \lambda,y' \rangle \Bigg)
= \conv \Big( \, \frac{1}{N} \sum_{i=1}^N \mathcal{Y}_i(\lambda) \Big)
= \frac{1}{N} \sum_{i=1}^N \conv \big( \mathcal{Y}_i(\lambda) \big),
\end{equation*}
which concludes the proof.
\end{proof}

\begin{remark}
If $\tilde{f}$ is differentiable on $\mathcal{E}$, with a Lipschitz-continous gradient, then $\tilde{f}^*$ is strongly convex (see \cite[Theorem 18.15]{bauschke2011convex}), which implies that \eqref{pb:dual} has a unique solution.
\end{remark}

Let us fix a solution $\lambda$ to the dual problem until the end of the subsection. Let us consider
\begin{equation*}
J_{\text{dec}}
= \inf_{x \in \mathcal{X}} J(x), \quad
\text{subject to: } x_i \in \mathcal{X}_i(\lambda), \quad \forall i=1,\ldots, N.
\end{equation*}
In words, we restrict $\mathcal{X}_i$ to the best-responses corresponding to the dual variable $\lambda$.
Following the terminology of \cite{Wang2017}, we call price of decentralization the real number $p= J_{\text{dec}}- J^*$.

\begin{proposition} \label{prop:ultimate_bound}
Let Assumptions \ref{ass1}, \ref{ass2}, \ref{ass3}, and \ref{ass:closedness} hold true.
It holds:
\begin{equation*}
p \leq J_{\mathrm{dec}} - \mathcal{J}^*
\leq \frac{1}{2N^2} \Bigg( \max_{
\begin{subarray}{c}
Q \subseteq \{ 1,\ldots, N \} \\
| Q |= q \wedge N
\end{subarray}
} \ \sum_{i \in Q} \rho \big( \mathcal{Y}_i(\lambda) \big)^2 \Bigg).
\end{equation*}
\end{proposition}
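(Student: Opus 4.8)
The plan is to replay the geometric argument behind Theorem \ref{thm:new_gap}, but applied to the \emph{restricted} feasible set induced by the dual solution $\lambda$ rather than to the whole set $\mathcal{Y}$. The left-hand inequality is immediate: by definition $p = J_{\mathrm{dec}} - J^*$, and $\mathcal{J}^* \leq J^*$ by Lemma \ref{lm: measure problem}, so $p = J_{\mathrm{dec}} - J^* \leq J_{\mathrm{dec}} - \mathcal{J}^*$. The work lies in the right-hand inequality.

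First I would recast $J_{\mathrm{dec}}$ geometrically. Since $\mathcal{Y}_i = \tilde{g}_i(\mathcal{X}_i)$, one checks that $\tilde{g}_i(\mathcal{X}_i(\lambda)) = \mathcal{Y}_i(\lambda)$, because $x_i$ minimizes $\langle \lambda, \tilde{g}_i(\cdot)\rangle$ over $\mathcal{X}_i$ if and only if $\tilde{g}_i(x_i)$ minimizes $\langle \lambda, \cdot\rangle$ over $\mathcal{Y}_i$. Writing $J(x) = \tilde{f}\big(\tfrac{1}{N}\sum_i \tilde{g}_i(x_i)\big)$ and letting $x$ range over the decentralized feasible set $\{x : x_i \in \mathcal{X}_i(\lambda)\}$, the aggregate $\tfrac{1}{N}\sum_i \tilde{g}_i(x_i)$ ranges exactly over $\mathcal{Y}(\lambda) := \tfrac{1}{N}\sum_i \mathcal{Y}_i(\lambda)$. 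Hence $J_{\mathrm{dec}} = \inf_{y \in \mathcal{Y}(\lambda)} \tilde{f}(y)$.

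Since $\nabla \tilde{f}$ is $1$-Lipschitz (as noted in the proof of Theorem \ref{thm:new_gap}), Lemma \ref{lemma:gap_geo} applied with $\mathcal{K} = \mathcal{Y}(\lambda)$ and $F = \tilde{f}$ gives $J_{\mathrm{dec}} \leq \big(\inf_{y\in\conv(\mathcal{Y}(\lambda))}\tilde{f}(y)\big) + \tfrac{1}{2}\rho(\mathcal{Y}(\lambda))^2$. The crucial step is then to show $\inf_{y\in\conv(\mathcal{Y}(\lambda))}\tilde{f}(y) \leq \mathcal{J}^*$. Using $\conv(\mathcal{Y}(\lambda)) = \tfrac{1}{N}\sum_i \conv(\mathcal{Y}_i(\lambda))$ (from $\conv(\mathcal{A}+\mathcal{B}) = \conv(\mathcal{A}) + \conv(\mathcal{B})$ and the homogeneity of the convex hull), Lemma \ref{lemma:duality} tells us that every solution $y^*$ of \eqref{pb:rel_geo} satisfies $y^* \in \tfrac{1}{N}\sum_i \conv(\mathcal{Y}_i(\lambda)) = \conv(\mathcal{Y}(\lambda))$; since such a solution exists and $\tilde{f}(y^*) = \mathcal{J}^*$, we obtain $\inf_{\conv(\mathcal{Y}(\lambda))}\tilde{f} \leq \mathcal{J}^*$ (the reverse inequality, not needed here, would follow from $\conv(\mathcal{Y}(\lambda)) \subseteq \conv(\mathcal{Y})$). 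This is the only place where the dual optimality of $\lambda$ and Assumption \ref{ass:closedness} enter, and I expect it to be the delicate point: without the characterization of the minimizers of \eqref{pb:rel_geo} furnished by Lemma \ref{lemma:duality}, there would be no reason for a global relaxed minimizer to lie inside the smaller set $\conv(\mathcal{Y}(\lambda))$.

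Finally I would bound $\rho(\mathcal{Y}(\lambda))^2$ exactly as in Theorem \ref{thm:new_gap}: by homogeneity $\rho(\mathcal{Y}(\lambda))^2 = \tfrac{1}{N^2}\rho\big(\sum_i \mathcal{Y}_i(\lambda)\big)^2$, and \cite[Theorem 2]{Cassels1975} (the Shapley--Folkman refinement of the subadditivity of $\rho(\cdot)^2$) yields $\rho\big(\sum_i \mathcal{Y}_i(\lambda)\big)^2 \leq \max_{|Q|=q\wedge N}\sum_{i\in Q}\rho(\mathcal{Y}_i(\lambda))^2$. Combining the three displays gives $J_{\mathrm{dec}} - \mathcal{J}^* \leq \tfrac{1}{2N^2}\max_{|Q|=q\wedge N}\sum_{i\in Q}\rho(\mathcal{Y}_i(\lambda))^2$, which is the announced bound.
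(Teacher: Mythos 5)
Your proof is correct and follows essentially the same route as the paper's: recast $J_{\mathrm{dec}}$ and $\mathcal{J}^*$ as minimizations of $\tilde{f}$ over $\frac{1}{N}\sum_i \mathcal{Y}_i(\lambda)$ and $\frac{1}{N}\sum_i \conv(\mathcal{Y}_i(\lambda))$ respectively (the latter via Lemma \ref{lemma:duality}), then apply Lemma \ref{lemma:gap_geo} together with \cite[Theorem 2]{Cassels1975} exactly as in Theorem \ref{thm:new_gap}. The only cosmetic difference is that you prove just the one-sided bound $\inf_{\conv(\mathcal{Y}(\lambda))} \tilde{f} \leq \mathcal{J}^*$ (which is all that is needed), whereas the paper asserts the corresponding equality.
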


\begin{proof}
The definition of $J_{\text{dec}}$ and Lemma \ref{lemma:duality} respectively yield:
\begin{equation*}
J_{\text{dec}}
= \inf_{y \in \frac{1}{N} \sum_{i=1}^N \mathcal{Y}_i(\lambda)} \, \tilde{f}(y)
\qquad
\text{and}
\qquad
\mathcal{J}^*
= \inf_{y \in \frac{1}{N} \sum_{i=1}^N \conv ( \mathcal{Y}_i(\lambda) )} \, \tilde{f}(y).
\end{equation*}
The announced estimate follows then from Lemma \ref{lemma:gap_geo} and \cite[Theorem 2]{Cassels1975}, as in the proof of Theorem \ref{thm:new_gap}.
\end{proof}

\begin{remark}
The randomization gap is bounded from above by $J_{\mathrm{dec}}- \mathcal{J}^*$.
Moreover, one can show that $\rho(\mathcal{Y}_i(\lambda)) \leq \rho(\mathcal{Y}_i)$. Thus Proposition \ref{prop:ultimate_bound} provides a last refinement of the gap estimate \eqref{eq:gap_estim_sf}.
\end{remark}

\section{Comments on numerical aspects and examples} \label{sec:comments}

\subsection{Literature comparison}

Let us compare our results and our method with the work of Wang \cite{Wang2017}. Our gap estimate, as well as our estimate of the price of decentralization, are of order $\mathcal{O}(\min(q,N)/N^2)$, while the estimates obtained by applying \cite[Theorem 3.5]{Wang2017} are of order $\mathcal{O}(q^2/N^2)$. We emphasize that our first gap estimate, of order $\mathcal{O}(1/N)$, already improves \cite{Wang2017} when $q \gg \sqrt{N}$. Note that the geometric relaxation employed in Section \ref{subsec-refined_gaps} is the same as the one used in \cite{Wang2017}.

Let us compare our algorithmic approaches. At a general level, one can observe that we have a primal approach, while Wang solves the dual problem to the relaxed problem. Our approach is restricted to the case where $f$ is differentiable, while the dual approach allows to tackle the case of hard constraints (for example when $f$ is the indicator function of some convex set).
Both approaches leverage the decomposability of the problem into $N$ problems and require that the subproblems can be easily solved. Let us emphasize however that we only need to be able to compute a single solution for those problems, while \cite[Algorithm 2]{Wang2017} requires to compute the full set of $\xi$-optimal solutions, which may be much more difficult.
Our algorithm does not require to perform Shapley-Folkman decompositions, contrary to \cite{Wang2017}. This is a major advantage when the dimension of the aggregate $q$ is very large.
Also, we do not need to evaluate $f^*$.
As a counterpart, we are only able to find $\mathcal{O}(1/N)$-optimal solutions, while the algorithm of \cite{Wang2017} can find $\mathcal{O}(q^2/N^2)$-optimal solutions. The design of a method for the computation of $\mathcal{O}(q \wedge N /N^2)$-solutions will be the topic of future research.

\subsection{Social welfare example}\label{sec:social}

A particularly interesting instance of \eqref{pb:aggre} is the social welfare optimization problem investigated in a closely related paper by Mengdi Wang \cite{Wang2017}. The cost function is the following:
\begin{equation} \label{pb:Wang}
\inf_{x_i\in \mathcal{X}_i} f_0 \left(\frac{1}{N}\sum_{i=1}^{N}h_{i}(x_i)\right) + \frac{1}{N}\sum_{i=1}^{N}l_{i}(x_i).
\end{equation}
Following her terminology, the function $h_{i}$ is the contribution of  agent $i$ to some common goods, $f_0$ is a social cost function of the common goods, and $l_{i}$ describes the individual preference of agent $i$. There are various applications fitting into the framework of \eqref{pb:Wang}, see \cite{Wang2017}.
In particular, some power system management problems can be modeled as \eqref{pb:Wang}. Such a problem is investigated in \cite{Seguret2020}: $x_i$ represents the production profile of the generator $i$, $l_{i}(x_i)$ is its individual production cost, $f_0$ denotes the demand elasticity or, equivalently, a penalty function that depends on the difference between the average production and some inflexible demand $D$ (e.g.\@ $f_0:=\Vert \cdot-D\Vert^2$) so as to penalize the deviation of the overall production from the inflexible demand.

{Let us also mention the \emph{resource allocation problems}, investigated in \cite{beaude2020privacy}, for example. These problems are of the form \eqref{pb:Wang}, where $f_0$ is the indicator function (as defined in \cite[Example 1.25]{bauschke2011convex}) of a given point $y \in \mathcal{E}$, modelling the resource to be allocated over the agents. These problems find applications in energy management (see for example \cite{ghaderyan2021fully} and \cite{jacquot2018analysis}).
They do not fit to the current framework, since the indicator function is not differentiable, but can be reasonably well approximated, replacing the indicator by a penalty function.
}

\color{black}
\subsection{{Discussion on the case of finite feasible sets}} \label{subsec:discussion}

The stochastic Frank-Wolfe algorithm investigated in the previous sections was motivated by the difficulty of manipulating probability measures, from a numerical point of view. However, when the sets $\mathcal{X}_i$ are finite, with relatively low cardinality, it is possible to store probability measures with possibly full support and some other numerical methods can be used to solve the randomized problem.
Let us assume (in this subsection only) that the sets $\mathcal{X}_i$ are of cardinality $n_i \in \mathbb{N}$ and that $\mathcal{X}_i = \{\textbf{x}_i^1,\ldots,\textbf{x}_i^{n_i} \}$. Then the randomized problem reads:
\begin{equation} \label{pb:simplex2}
\min_{\nu =(\nu_1,\ldots,\nu_N)} \
f \Big( \frac{1}{N} \sum_{i=1}^N \sum_{\ell=1}^{n_i} \nu_i^{\ell} g_i(\textbf{x}_i^{\ell})\Big),
\quad
\text{subject to: } \nu_i \in \Delta(n_i),
\end{equation}
where $\Delta(n_i)$ denotes the $(n_i-1)$-simplex, i.e.
\begin{equation*}
\Delta(n_i)= \Big\{ \nu \in \R^{n_i} \, \Big| \,
\sum_{\ell=1}^{n_i} \nu^{\ell} = 1 \text{ and }
\nu^{\ell} \geq 0,\, \forall \ell =1,\ldots, n_i
\Big\}.
\end{equation*}
The problem is a convex program on a Cartesian product of $N$ simplices.
Let us first note that in this framework, Assumption \ref{ass3} is trivially verified, since problem \eqref{eq:sub_pb_i} is just a minimization problem over $\mathcal{X}_i$ which can be solved by enumeration. Moreover
any variant of the Frank-Wolfe algorithm can be implemented, in order to solve the randomized problem in a faster way. We refer the reader to \cite{Jaggi2013,lacoste2015global}.
Some other methods could also be implemented. The problem could be solved with the projected gradient descent algorithm, but the projection on the simplices is expensive (see \cite{condat2016fast}). Instead, the problem can be naturally addressed with the mirror descent algorithm \cite{beck2003mirror} (see in particular the entropic descent algorithm in Section 5), and with accelerated versions of the entropic descent algorithm \cite{krichene2015accelerated}.

Let us observe that if we require $\nu$ to have integer entries in the problem \eqref{pb:simplex2}, then we are back to the original problem. Indeed, the elements of the simplex with integer entries are its vertices, that is, the vectors of the form $(0,\ldots,0,1,0,\ldots,0)$. Therefore the original problem can be viewed as a mixed-integer convex program (MICP) and can be addressed numerically with combinatorial techniques, see \cite{bonami2012algorithms,coey2020outer} and the references therein.

\subsection{Aggregative optimal control}

We describe here a large-scale optimal control problem of the form of problem \eqref{pb:aggre}, with an infinite-dimensional aggregate space. We verify Assumptions \ref{ass1}, \ref{ass2}, and \ref{ass3} and we discuss the applicability of the Stochastic Frank-Wolfe algorithm.

Let us first fix the data of the problem. For any $i=1,\ldots,N$, we consider: an initial condition $z_i^0 \in \R^n$, a control set $U_i \subseteq \R^m$, a dynamics $F_i \colon (z_i,u_i) \in \R^n \times U_i \mapsto F_i(z_i,u_i) \in \R^n$, and a contribution function $\phi_i \colon \R^n \times U_i \rightarrow \R^k$. We also consider a social cost $\ell \colon \R^k \rightarrow \R$.
We make the following assumptions:
\begin{enumerate}
\item \emph{Regularity and boundedness.}
For any $i=1,\ldots,N$,
\begin{itemize}
\item $U_i$ is non-empty and compact
\item $F_i$ is continuous, Lipschitz continuous with respect to $z_i$, uniformly with respect to $u_i$; moreover, there exists a constant $K_i$ such that $\| F_i(z_i,u_i) \| \leq K_i (1 + \| z_i \| )$, for any $(z_i,u_i) \in \R^n \times U_i$
\item $\phi_i$ is continuous; moreover, there exists a function $R_i \colon \R_+ \rightarrow \R_+$ such that $\| \phi_i(z_i,u_i) \| \leq R_i \big( \| z_i \| + \| u_i \| \big)$,
for any $(z_i,u_i) \in \R^n \times U_i$.
\end{itemize}
\item \emph{Regularity of the social cost.}
The function $\ell$ is continuously differentiable, moreover, $\ell$ and $\nabla \ell$ are Lipschitz continuous with moduli $L_{\ell}$ and $L_{\nabla \ell}$, respectively.
\item \emph{Convexity assumption.}
For any $i=1,\ldots,N$, for any $y \in \R^k$, for any $z_i \in \R^n$, we define $\mathcal{Z}_i(y,z_i)$ the set of all elements $(\bar{z}_1,\bar{z}_2)$ in $\R^{n+1}$, where there exists $u_i\in U_i$, such that $\bar{z}_1 = F_i(z_i,u_i) $ and $\bar{z}_2 \geq \langle \nabla  \ell(y), \phi_i(z_i,u_i) \rangle$. The set $\mathcal{Z}_i(y,z_i)$ is convex.
\end{enumerate}

Let us mention a particular case in which the above convexity assumption is true:
for any $i=1,\ldots,N$, for any $y \in \R^k$, for any $z_i \in \R^n$,
 \begin{itemize}
 \item For any $z_i$, the map $u_i \mapsto F_i(z_i,u_i)$ is affine.
\item The set $U_i$ is convex and the function $u_i \in U_i \mapsto \langle \nabla  \ell(y), \phi_i(z_i,u_i) \rangle$ is convex.
\end{itemize}

For any $i =1,\ldots,N$, consider the set $\mathcal{X}_i$ of pairs $(z_i,u_i) \in W^{1,\infty}(0,T;\R^n) \times L^\infty(0,T;\R^m)$ satisfying
\begin{equation*}
\dot{z}_i(t)= F_i(z_i(t),u_i(t)), \quad
z_i(0)= z_i^0, \quad
u_i(t) \in U_i, \quad \text{for a.e.\@ $t \in (0,T)$}. 
\end{equation*}
A direct application of Gronwall's lemma shows that for any $(z_i,u_i) \in \mathcal{X}_i$, we have $\|  z_i \|_{L^\infty(0,T;\R^n)} \leq \tilde{K}_i$, where $\tilde{K}_i=(1+ \| y_0^i \|) \exp(K_i T) - 1$.

The aggregative optimal control problem of interest is defined as follows:
\begin{equation} \label{eq:aggreg_oc}
\inf_{(z_i,u_i)_{i=1}^N \in \prod_{i=1}^N \mathcal{X}_i} \
\int_0^T \ell \Big( \frac{1}{N} \sum_{i=1}^N \phi_i \big( z_i(t),u_i(t) \big) \Big) \, \text{d}t.
\end{equation}
It is a special case of problem \eqref{pb:aggre} with $m=1$, $\mathcal{E}_1= \mathcal{E}= L^2(0,T;\R^k)$, and
\begin{align*}
\begin{array}{rccl}
g_i \colon & \! \! \! \! (z_i,u_i) \in \mathcal{X}_i
& \mapsto & \big( t \in (0,T) \mapsto \phi_i(z_i(t),u_i(t)) \big) \in L^2(0,T;\R^k) \\[0.7em]
f \colon & \! \! \! \! y \in L^2(0,T;\R^k)
& \mapsto & \int_0^T \ell(y(t)) \, \text{d}t.
\end{array}
\end{align*}
Problem \eqref{eq:aggreg_oc} can be seen as a nonconvex optimal control problem with state variable $(z_i)_{i=1}^N$. It finds application in energy management, in the situations mentioned in the introduction and in particular those involving storage devices, for which the dynamics of the state-of-charge must be taken into account. Once again we refer the reader to \cite{Seguret2020}, which considers a convex stochastic aggregative optimal control problem. In general, only dynamic-programming-based methods can provide global solutions to nonlinear optimal control problems. They are not applicable here because of the high dimension of the state variable, equal to $Nn$.

It is easy to verify that $\nabla f$ is continuously differentiable and that $f$ and $\nabla f$ are Lipschitz-continuous with moduli $\sqrt{T} L_{\ell}$ and $L_{\nabla \ell}$, respectively. Let $\hat{K}_i$ be an upper bound of $\sup_{u_i \in U_i} \, \| u_i \|$, for all $i \in 1,\ldots, N$. Then $g_i(\mathcal{X}_i)$ is bounded in $L^2(0,T;\R^k)$, with diameter bounded by $2\sqrt{T} R_i(\tilde{K}_i + \hat{K}_i)$.
Therefore, Assumption \ref{ass1} is satisfied.
If $\ell$ is convex, then $f$ is also convex and then Assumption \ref{ass2} holds true. Let us verify Assumption \ref{ass3}. Given $y \in G(\mathcal{X})$, the problem \eqref{eq:sub_pb_i} to be solved at each iteration of the SFW algorithm reads
\begin{equation} \label{}
\inf_{(z_i,u_i) \in \mathcal{X}_i} \
\int_0^T \big\langle \nabla \ell(y(t)), \phi_i(z_i(t),u_i(t)) \big\rangle \, \text{d}t.
\end{equation}
This is an optimal control with state variable $z_i$, which falls into the class of problems introduced in \cite[Chapter III, Theorem 4.1]{fleming1975} and therefore possesses a solution. It the dimension of the state variable, $n$, is small, then it can be solved by dynamic programming. We refer the reader to \cite{falcone2013semi}.

\subsection{Supervised learning problems}

We describe and discuss here two applications of problem \eqref{pb:aggre} in the context of supervised learning.

\paragraph{Neural networks with one hidden layer}

We refer the reader to \cite{Chizat2018,mei2018mean,mei2019mean}. Consider a neural network of the form
$\frac{1}{N} \sum_{i=1}^N \sigma_*(\mathbf{a},x_i)$, where $\mathbf{a} \in \R^d$ is the feature vector, $x= (x_i)_{i=1}^N \in (\R^{D})^N$ are the network parameters (to be optimized), and $\sigma_* \colon \R^d \times \R^D \rightarrow \R$ an activation function. We consider a loss function $\varphi \colon \R \rightarrow \R_+$. Given a data set $(\mathbf{a}_j,b_j)_{j=1}^M \in (\R^d \times \R)^M$, the learning problem of interest writes
\begin{equation}\label{pb:Mei}
    \inf_{(x_i)_{i=1}^N \in (\mathbb{R}^{D})^N} \frac{1}{M}\sum_{j=1}^M \varphi \Big(b_j - \frac{1}{N}\sum_{i=1}^N \sigma_{*}(\mathbf{a}_j,x_i)\Big).
\end{equation}
It is of the form \eqref{pb:aggre}, with $\mathcal{E}=\R^M$, $\mathcal{E}_j= \R$, $f_j(y_j)= \varphi(b_j-y_j)/M$, $g_{ij}(x_i)= \sigma_*(\mathbf{a}_j,x_i)$. Assume that the set
$\{ \sigma_*(\textbf{a}_j,x) \, | \, x \in \R^D, \, j \in \{ 1,\ldots,M \} \}$
has a bounded diameter $\bar{d}$. Assume moreover that $\varphi$ is continuously differentiable and that $\nabla \varphi$ is $L_{\nabla \varphi}$-Lipschitz continuous. Then Assumption \ref{ass1} is satisfied and we have $D_i= L_{\nabla \varphi} \bar{d}^2$, for the coefficients $D_i$ introduced in \eqref{eq:D_i}. Therefore, by Theorem \ref{thm:new_gap}, the optimality gap is bounded by
\begin{equation*}
\frac{(M \wedge N) L_{\nabla \varphi} \bar{d}^2}{2N^2}.
\end{equation*}
Moreover, if $\varphi$ is convex, then Assumption \ref{ass2} holds true. The resolution of the subproblems \eqref{eq:sub_pb_i} is not  easy in general, we refer the reader to \cite{d2020global} where the linearized problems are shown to be solvable by second-order cone programming in the case of ReLu activation functions.

Note that we are here in the symmetric case, as defined at the end of Section \ref{subsec:relax}. The mean-field relaxation proposed in Lemma \ref{lm:mean-field} was also utilized in \cite{mei2019mean,mei2018mean} for learning problems of the form \eqref{pb:Mei}. A gap estimate of order $\mathcal{O}(1/N)$ is demonstrated, in the case of a quadratic loss function $\varphi$, see \cite[Prop.\@ 1]{mei2018mean}. Our gap estimate is more general since $\nabla \varphi$ is only supposed to be Lipschitz; moreover, it is more precise in the case of an overparametrized network (i.e.\@ when $M<N$), since then it is of order $\mathcal{O}(M/N^2)$.

\paragraph{Sparse reconstruction}

Another important learning example is the sparse reconstruction with the $\ell_0$-penalty, see \cite{Mallat2009,Mairal2014}. Let $D$ be a $M$ by $N$ dictionary matrix. The objective is to approximate the observed vector $x\in\mathbb{R}^M$ by a sparse linear combination of the columns of $D$. Following \cite[Eq.\@ 5.6]{Mairal2014}, we are interested in the following least square problem with the $\ell_0$-penalty:
\begin{equation*}
\inf_{\alpha \in \mathbb{R}^{N}}
\frac{1}{2} \big\|  x- D \alpha \big\|^2 + \beta \|\alpha\|_{\ell_0}
=
\inf_{\alpha \in \mathbb{R}^{N}}
\frac{1}{2} \sum_{j=1}^M  \Big( x_j - \sum_{i=1}^N D_{ji}\alpha_i\Big)^2
+
\beta \sum_{i=1}^N \mathbf{1}_{\R \backslash \{ 0 \}} (\alpha_i),
\end{equation*}
where $\beta$ is a constant and $\|\alpha\|_{\ell_0}$ counts the number of non-zero entries in a vector $\alpha$. Adding constraints of the form $\alpha_i \in [u_i,v_i]$ to the problem, it is easy to see that Assumptions \ref{ass1} and \ref{ass2} are satisfied. The subproblems \eqref{eq:sub_pb_i} are here of the form
\begin{equation*}
\inf_{\alpha_i \in [u_i,v_i]} z \alpha_i + \mathbf{1}_{\R \backslash \{ 0 \} }(\alpha_i)
\end{equation*}
for some real number $z$. One can show that there is a solution that necessarily lies in $\{ u_i, v_i, 0 \}$, thus it is easy to compute.

Finally, let us mention other applications of the problem \eqref{pb:aggre} in a convex framework, for instance, the ``sharing problem" in \cite{Boyd2011}, Lasso regression in \cite{Fercoq2016} and the dual problem of a linear support vector machine (SVM) in \cite{shalev2009stochastic,Fercoq2016}.

\color{black}

\section{Numerical test} \label{sec:num}

In this section we provide numerical results for a mixed-integer linear quadratic problem of the form \eqref{pb:aggre}.
Let $A$ be a real $M\times N$ matrix and let $\bar{y}\in \R^M$. Consider the following problem:
\begin{equation*} \label{pb:miqp1}\tag{MIQP}
\min_{x \in \{ 0, 1 \}^N}
J(x)
:= \frac{1}{N^2} \|Ax -\bar{y}\|^2_{\mathbb{R}^M}
= \sum_{j=1}^M \left(\frac{1}{N}\sum_{i=1}^{N} A_{ji}x_i - \frac{\bar{y}_{j}}{N}\right)^2.
\end{equation*}
Problem \eqref{pb:miqp1} has the form \eqref{pb:aggre}, with $f_j(y_j) =\big( y_j-\frac{\bar{y}_{j}}{N} \big)^2$ for $1\leq j\leq M$, and $g_{ij}(x_i) = A_{ji}x_i$ for $1\leq i\leq N$, $1\leq j\leq M$. Moreover, Assumption \ref{ass1} is satisfied with $\tilde{L}_j=2$ and $d_{ij}= |A_{ji}|$. Thus $C_1= \frac{2}{N} \sum_{i=1}^N \sum_{j=1}^M |A_{ji}|$. Due to the linearity of $g_{ij}$, the randomized problem coincides with the minimization problem of $J$ on $[0,1]^N$, which is a convex linear-quadratic program that can be solved with independent methods; thus it is easy here to obtain a precise estimate of $\mathcal{J}^*$.

{In the numerical simulation, we draw the parameters $A_{ji}$ according to the uniform distribution on the interval $[0,1]$ while $y_j$ is drawn according to the uniform distribution on $[0,N/2]$. Thus, $C_1 \approx M$ and the gap estimate is given by $\frac{C_1}{2N} \approx 0.5$. We perform our numerical experiments on a laptop with one Intel Core i5-8250U processor (4 cores) at 1.60 GHz and 8 GB RAM.}

{
The first experiment is a comparison of Algorithm \ref{alg1+k} with an open source solver, SCIP, \cite{scip2021} and a commercial solver, GUROBI, \cite{gurobi2018gurobi}. As mentioned before, the dual (randomized) problem is a convex linear-quadratic program. We can compute $\mathcal{J}^{*}$ easily by solver GUROBI. Table \ref{tab1new} shows the value $\mathcal{J}^{*}$ and results of \eqref{pb:miqp1} obtained from SCIP, GUROBI and Algorithm \ref{alg1+k}, for different values of $M,N$ ranging from 100 to 3200. In Table \ref{tab1new}, ``Nan" indicates that the solver has failed to return a result or that computation time has exceeded one hour. Denote by $v_{s}$ the result of Algorithm \ref{alg1+k}. The indicated gap is a relative gap, in percent, defined by $(v_s - \mathcal{J}^{*})/\mathcal{J}^{*}$. We can observe that the relative gap 
decreases as $N$ increases, which is consistent with the randomized gap \eqref{eq:gap}. The last three columns of Table \ref{tab1new} show that Algorithm \ref{alg1+k} is competitive in terms of execution time, in comparison with SCIP and GUROBI. Finally, observe that for $N=M=3200$, none of the two solvers could solve the problems while Algorithm \ref{alg1+k} has provided a solutions in approximately 6 minutes.
}

\begin{table}[h]
\begin{center}
\begin{tabular}{|r||r|r|r|rr||r|r|r|}
\hline
\multirow{2}{*}{$N=M$} & \multirow{2}{*}{$\mathcal{J}^{*}$\ \ } & \multicolumn{1}{|c|}{SCIP} & \multicolumn{1}{|c|}{GUR.} & \multicolumn{2}{c||}{SFW} & SCIP & GUR. & SFW            \\ \cline{3-9} 
                     &                             &           \multicolumn{1}{|c|}{value}           &       \multicolumn{1}{|c|}{value}                 & \multicolumn{1}{c|}{value} & \multicolumn{1}{c||}{
                     \hspace{-4mm} \begin{tabular}{c} gap \hspace{-2mm}\\ in \% \hspace{-2mm} \\ \end{tabular} \hspace{0mm}} & \multicolumn{3}{c|}{time in seconds } \\ \hline
100  &                2.077                 &     2.077                  & 2.077                         & \multicolumn{1}{r|}{2.136}      &     2.870       &    0.88   &    0.20    &  0.03        \\
200                  &        4.120                     &       4.120                &               4.120          & \multicolumn{1}{r|}{4.159}      &    0.956  &    5.99 &   0.69   &   0.09         \\
400                  &       7.871                      &      7.871                 &                    7.871     & \multicolumn{1}{r|}{7.904}      &    0.430  &   87.78   &   7.90    &   0.91   \\
800                  &           15.953                  &         Nan              &             15.954            & \multicolumn{1}{r|}{15.966}      &     0.079   &   Nan   &    10.63    &   6.18      \\
1600                 &     32.045                      &      Nan                 &            32.048             & \multicolumn{1}{r|}{32.0585}      &     0.042   &  Nan   &   81.41     &  42.51      \\
3200                 &      64.717                       &        Nan               &               Nan          & \multicolumn{1}{r|}{64.724}      &   0.012  &  Nan   &   Nan     &   330.95 \\
\hline
\end{tabular}
\caption{Comparison of the approximate values and execution times obtained with SCIP, GUROBI and Algorithm \ref{alg1+k} for problem \eqref{pb:miqp1} with $M=N = $ 100, 200, 400, 800, 1600 and 3200. In Algorithm \ref{alg1+k}, we take $n_k=1$ and $K = 2N$ iterations.}
\label{tab1new}
\end{center}
\color{black}
\end{table}

{The second experiment is on the basic Frank-Wolfe algorithm \ref{alg1} and its stochastic version \ref{alg1+k}. In this experiment, we fix $M=N=1000$.}
Figure \ref{fig:MIQP1} shows the outcome of the basic Frank-Wolfe algorithm \ref{alg1} with $200$ iterations. The left sub-figure shows the evolution of $\gamma_k$ for $\omega_k= 2/(k +2)$ (green curve) and for $\omega_k$ determined by line search \eqref{eq:ls1} (red curve). A sub-linear rate of convergence is observed (note that logarithmic scales are employed for both axes). The right sub-figure represents the evolution of $J(X^k)-\mathcal{J}^*$, where $X^k$ is a random variable with distribution $\mu^k$. For both choices of $\omega_k$, approximate solutions to the problems are simulated, with a gap smaller than $10^{-3}$, significantly smaller than the gap estimate $\frac{C_1}{2N}$. The line search approach is quicker than the approach with $\omega_k= \frac{2}{k+2}$.

\begin{figure}[ht]
	\centering
	\includegraphics[width=1\linewidth]{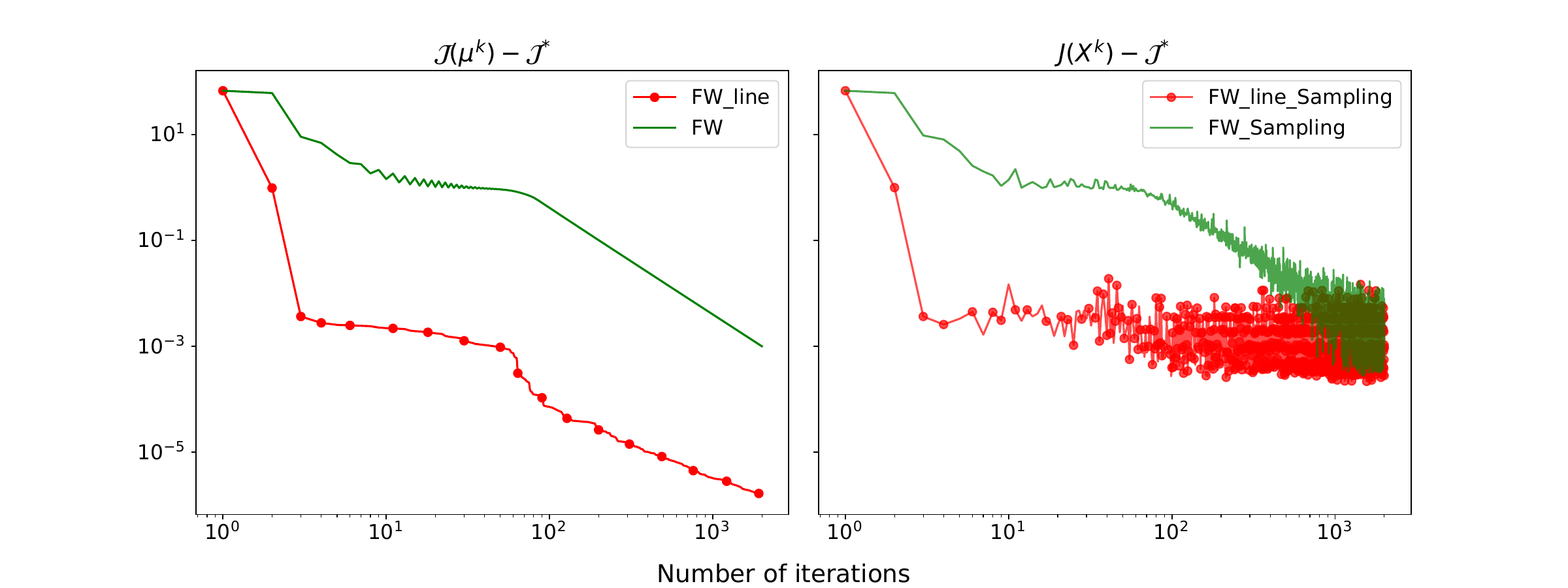}
	\caption{MIQP by Algorithm \ref{alg1}, $2000$ iterations, with $\omega_k = 2/(k+2)$ and line search \eqref{eq:ls1}.}
	\label{fig:MIQP1}
\end{figure}

Figure \ref{fig:MIQP2} shows the outcome of Algorithm \ref{alg1+k} (with the modification suggested in Remark \ref{rm:modif}), for different (constant) choices of $n_k$ with 200 iterations, for two different stepsize rules ($\omega_k= 2/(k+2)$ on the left, line search on the right). Since the algorithm is stochastic, we have tested it 50 times to evaluate its efficiency; the curves represent the average value of $\gamma_k$. The standard deviation (for these 8 instances of the SFW method) is displayed on Figure \ref{fig:MIQP3}. In all cases, an average value of the gap significantly smaller than $\frac{C_1}{2N}$ can be reached; the standard deviation is also significantly smaller than $\frac{C_1}{2N}$ at the last iterations. There is a benefit (both in expectation and standard deviation) in increasing the number of simulations $n_k$ (note that the choice $n_k=1000$ is much smaller the rule suggested by Corollary \ref{coro:convergence}). Yet the convergence is slower in comparison with the basic Franck-Wolfe algorithm, which can be explained by the use of the selection method at each iteration.
\begin{figure}[ht]
	\centering
	\includegraphics[width=1\linewidth]{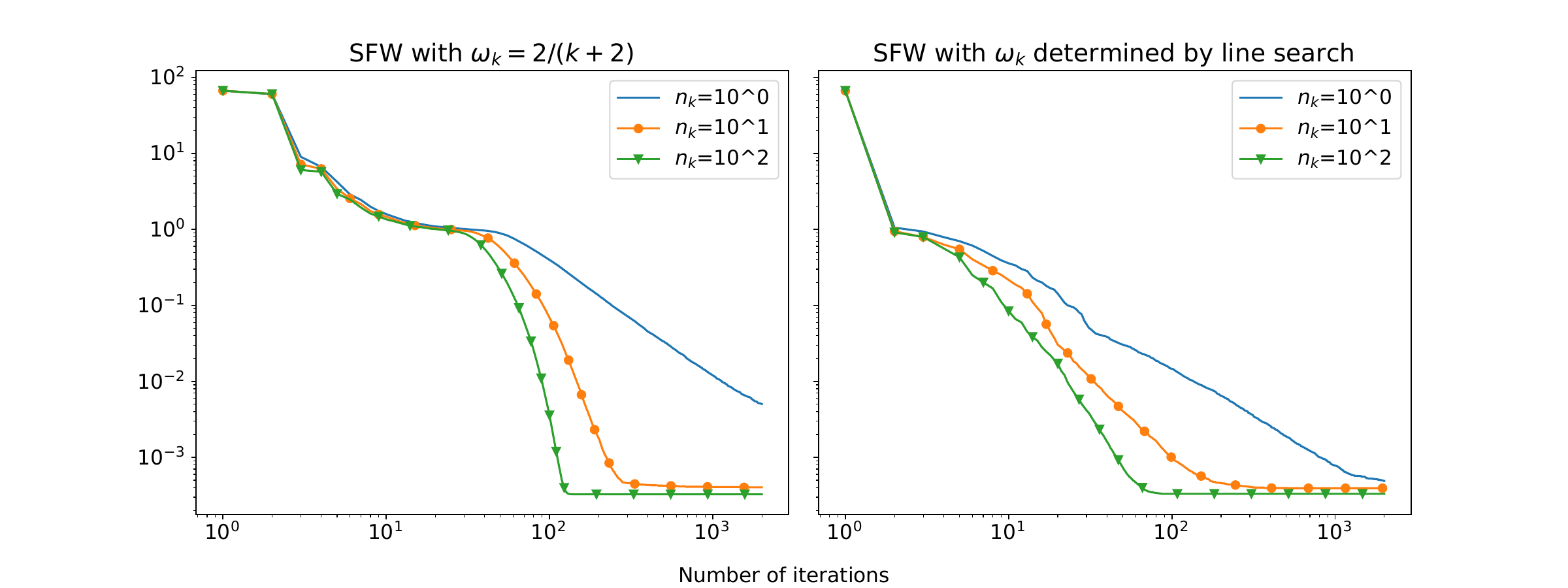}
	\caption{MIQP by Algorithm \ref{alg1+k} with $2000$ iterations, expectation of the gap.}
	\label{fig:MIQP2}
	\centering
	\includegraphics[width=1\linewidth]{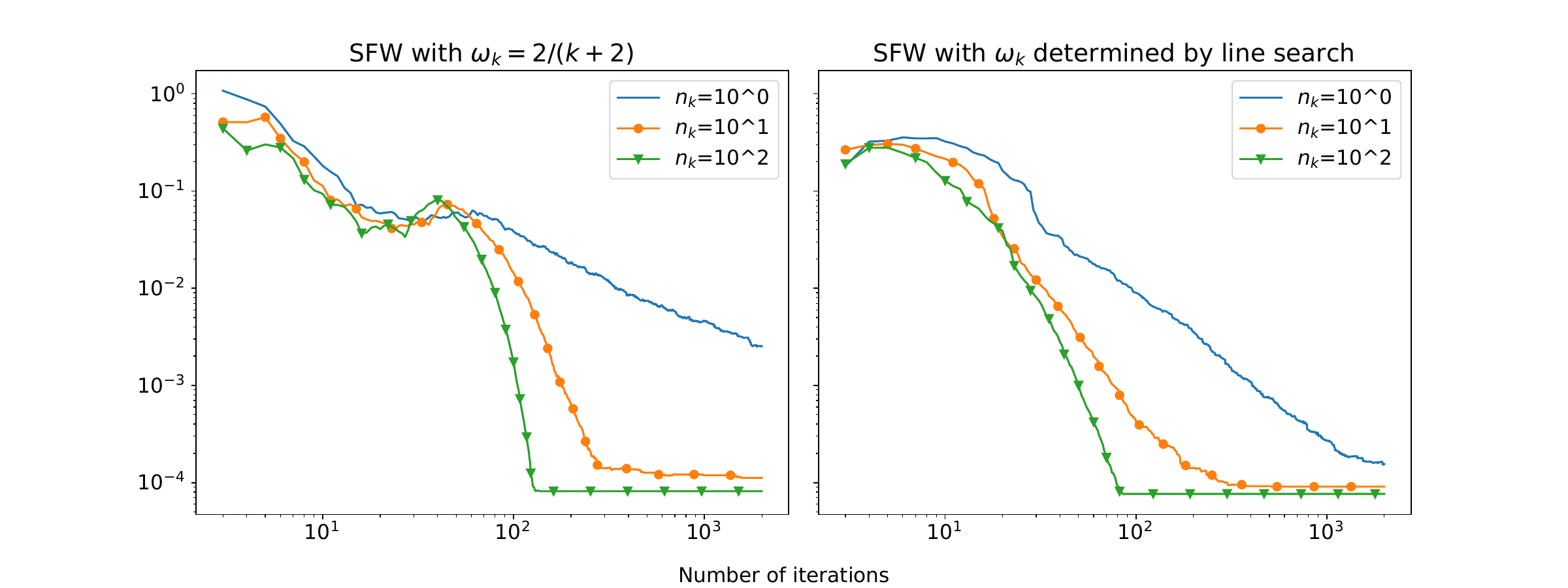}
	\caption{MIQP by Algorithm \ref{alg1+k} with $2000$ iterations, standard deviation of the gap.}
	\label{fig:MIQP3}
\end{figure} 

\section{Conclusion}

We have investigated a large-scale and aggregative optimization problem and its relaxation. New error bounds for the relaxation gap have been obtained. We have proposed a tractable algorithm for its resolution with a detailed convergence analysis relying on  concentration inequalities. Assuming that an efficient method for the resolution of the subproblems is available, the implementation of our stochastic Frank-Wolfe method is easy. 

Future research will focus on refinements of the selection method, allowing the computation of $\mathcal{O}(q \wedge N/N^2)$-solutions. We also aim at working on more complex problems, involving for example convex constraints on the aggregate, {as for example the resource allocation problems mentioned in the introduction. Such constraints could be handled with extensions of the Frank-Wolfe algorithm for non-smooth costs as those proposed in \cite{silveti2020generalized,yurtsever2019conditional2}.}
Finally, we intend to apply our method to large-scale optimal control problems, such as nonconvex variants of the problem investigated in \cite{Seguret2020}.

\appendix

\section{Concentration inequalities and other technical lemmas}

\begin{proposition} \label{prop:exp}
Consider $T$ real-valued random variables $(Y_t)_{t=1,...,T}$. Let $(\mathcal{F}_t)_{t=1,...,T}$ denote the associated filtration ($\mathcal{F}_0$ is the trivial $\sigma$-algebra). Let $Z_t= \mathbb{E}[Y_t^2 | \mathcal{F}_{t-1}]$ and let $S_T= \sum_{t=1}^T Y_t$.
Assume the following:
\begin{equation} \label{eq:prop_concentration}
(i) \quad \mathbb{E}[Y_t \mid \mathcal{F}_{t-1}] = 0, \qquad
(ii) \quad Y_t \leq m, \qquad
(iii) \quad \sum_{t'=1}^T Z_{t'} \leq v, \quad \text{a.s.}
\end{equation}
for all $t=1,...,T$ and for some constants $m$ and $v$. Then, $\mathbb{E}[S_T^2] \leq v$.
Moreover, for any $\epsilon > 0$,
\begin{equation}
\mathbb{P} \big[ S_T \geq \epsilon \big] \leq \exp \Big( -\frac{\epsilon^{2}}{2\left(v+ \epsilon m / 3\right)} \Big).
\end{equation}
\end{proposition}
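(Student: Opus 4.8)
The plan is to treat the two assertions separately: the second-moment bound $\mathbb{E}[S_T^2]\leq v$ by orthogonality of martingale differences, and the tail bound by the exponential (Chernoff) method built on a supermartingale. For the moment bound I would expand $S_T^2 = \sum_{t=1}^T Y_t^2 + 2\sum_{s<t} Y_s Y_t$ and kill each cross term by conditioning on $\mathcal{F}_{t-1}$: since $Y_s$ is $\mathcal{F}_{t-1}$-measurable for $s<t$, assumption (i) gives $\mathbb{E}[Y_s Y_t \mid \mathcal{F}_{t-1}] = Y_s\,\mathbb{E}[Y_t\mid\mathcal{F}_{t-1}] = 0$. Hence $\mathbb{E}[S_T^2] = \sum_{t=1}^T \mathbb{E}[Y_t^2] = \mathbb{E}\big[\sum_{t=1}^T Z_t\big] \leq v$ by (iii).

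For the tail bound the central tool is a one-step conditional moment-generating-function estimate. Fix $\theta>0$ and set $g(\theta) = (e^{\theta m}-1-\theta m)/m^2$. I would use that the map $x\mapsto (e^x-1-x)/x^2$ is nondecreasing on $\mathbb{R}$ (checked by writing it as a power series with nonnegative coefficients, or by a direct derivative computation), so that for every $y\leq m$ one has $e^{\theta y}-1-\theta y \leq g(\theta)\,y^2$. Taking the conditional expectation, using (i) to discard the linear term, and applying $1+u\leq e^u$ then yields $\mathbb{E}[e^{\theta Y_t}\mid\mathcal{F}_{t-1}] \leq \exp(g(\theta)Z_t)$. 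The point worth stressing is that only the one-sided bound $Y_t\leq m$ from assumption (ii) is used here. From this I would form the process $M_t = \exp\big(\theta S_t - g(\theta)\sum_{s=1}^t Z_s\big)$ and verify, using that $Z_t$ is $\mathcal{F}_{t-1}$-measurable, that it is a supermartingale with $M_0=1$, so $\mathbb{E}[M_T]\leq 1$. Since $g(\theta)\geq 0$ and $\sum_{s=1}^T Z_s\leq v$ almost surely by (iii), this collapses to $\mathbb{E}[e^{\theta S_T}]\leq e^{g(\theta)v}$.

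Finally I would run the Chernoff bound $\mathbb{P}[S_T\geq\epsilon] \leq e^{-\theta\epsilon}\mathbb{E}[e^{\theta S_T}] \leq \exp(g(\theta)v - \theta\epsilon)$ and relax this Bennett-type exponent to a Bernstein-type one. The relaxation rests on the elementary inequality $e^u-1-u \leq \frac{u^2/2}{1-u/3}$ for $0\leq u<3$, which follows from $(k+2)!\geq 2\cdot 3^k$ applied termwise to the power series of $e^u-1-u$; it gives $g(\theta) \leq \frac{\theta^2/2}{1-\theta m/3}$ for $0\leq\theta<3/m$. Substituting and choosing $\theta = \epsilon/(v+\epsilon m/3)$, which satisfies $\theta m/3<1$, makes the exponent collapse exactly to $-\epsilon^2/\big(2(v+\epsilon m/3)\big)$, as claimed.

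The step I expect to be the main obstacle is the supermartingale construction: deriving the one-step MGF inequality from only the upper bound on $Y_t$ (hence the need for the monotonicity of $(e^x-1-x)/x^2$ rather than a two-sided Hoeffding-type argument), and correctly aligning the compensator $\sum_{s\leq t} Z_s$ so that $\mathcal{F}_{t-1}$-measurability of $Z_t$ makes $M_t$ a genuine supermartingale. Once $\mathbb{E}[e^{\theta S_T}]\leq e^{g(\theta)v}$ is in hand, the Bernstein relaxation and the optimization over $\theta$ are routine.
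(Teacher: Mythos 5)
Your proof is correct, but it follows a genuinely different route from the paper: the paper does not actually prove the tail bound, it simply invokes \cite[Theorem 7]{Delyon2015}, and disposes of the moment bound ``by induction'' (your orthogonality expansion of $S_T^2$ is the same computation, merely organized without induction). What you have reconstructed is the classical Freedman--Bernstein argument for martingales with one-sided bounded increments: the one-step conditional MGF bound $\mathbb{E}[e^{\theta Y_t}\mid\mathcal{F}_{t-1}]\leq\exp(g(\theta)Z_t)$ via monotonicity of $x\mapsto(e^x-1-x)/x^2$, the supermartingale $M_t=\exp\big(\theta S_t-g(\theta)\sum_{s\leq t}Z_s\big)$ (valid precisely because $Z_t$ is $\mathcal{F}_{t-1}$-measurable), the collapse to $\mathbb{E}[e^{\theta S_T}]\leq e^{g(\theta)v}$ by assumption (iii), and the Chernoff step with the Bennett-to-Bernstein relaxation $e^u-1-u\leq\frac{u^2/2}{1-u/3}$. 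Your choice $\theta=\epsilon/(v+\epsilon m/3)$ indeed produces exactly the stated exponent $-\epsilon^2/\big(2(v+\epsilon m/3)\big)$. What your route buys is self-containedness, plus the explicit observation that only the upper bound $Y_t\leq m$ is used; what the paper's route buys is brevity, at the cost of outsourcing the core inequality to a technical report.

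One small repair is needed. Your first justification of the monotonicity of $\phi(x)=(e^x-1-x)/x^2$ --- ``power series with nonnegative coefficients'' --- only yields monotonicity on $[0,\infty)$, whereas you apply the inequality at $x=\theta y$ with $y$ possibly very negative, so monotonicity on all of $\mathbb{R}$ is what you actually need. Your fallback, the direct derivative computation, does work: $\phi'(x)=\big((x-2)e^x+x+2\big)/x^3$, and the numerator $\psi(x)=(x-2)e^x+x+2$ satisfies $\psi(0)=\psi'(0)=0$ and $\psi''(x)=xe^x$, so $\psi$ has the same sign as $x$ and hence $\phi'\geq 0$ everywhere. With that substitution, the argument is complete and correct.
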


\begin{proof}
The estimate of $\mathbb{E}[S_T^2]$ can be easily obtained by induction.
For the estimate of $\mathbb{P} \big[ S_T \geq \epsilon \big]$, see \cite[Theorem 7]{Delyon2015}.
\end{proof}

As a corollary, we obtain the following McDiarmid's inequality of ``variance type".

\begin{corollary}\label{cor:md}
Let $(\Omega,\mathcal{F},\mathbb{P})$ be a probability space and let $(\Omega_i)_{i=1,...,n} $ be $n$ measurable subsets of $\Omega$. Let $X=(X_i)_{i=1,...,n}$ be $n$ independent random variables valued respectively in $(\Omega_i)_{i=1,...,n}$.
Consider a measurable function $f\colon \prod_{i=1}^{n} \Omega_i \rightarrow \mathbb{R}$ and real constants $m \in \R$ and $(v_i)_{i=1,...,n}$ such that
\begin{equation*}
\mathrm{Var} \big[ f(X_i, x_{-i}) \big] \leq v_{i}^2, \quad \text{a.s.}, \qquad
\big| f(X_i,x_{-i}) -\mathbb{E} \big[ f(X_i, x_{-i}) \big] \big| \leq m, \qquad \text{a.s.},
\end{equation*}
for all $i=1,...,n$ and for all $x_{-i} \in \Big( \prod_{j=1}^{i-1} \Omega_i \Big) \times \Big( \prod_{j=i+1}^{n} \Omega_j \Big)$.
Then, for any $\epsilon>0$,
\begin{equation}\label{eq:mc2}
\mathbb{P} \Big[ f(\mathbf{x}) -\mathbb{E} \big[ f(\mathbf{x}) \big] \geq \epsilon \Big]
\leq
\exp \Big( -\frac{\epsilon^{2}}{2\left(\sum_{i=1}^n v^2_i + \frac{m\epsilon}{3}\right)} \Big).
\end{equation}	
\end{corollary}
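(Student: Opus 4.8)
The plan is to deduce the bound from the martingale concentration inequality of Proposition~\ref{prop:exp}, applied to the Doob martingale built from $f(X)$. I would work with the natural filtration $\mathcal{F}_i = \sigma(X_1,\ldots,X_i)$, with $\mathcal{F}_0$ trivial, and define the martingale differences $Y_i = \mathbb{E}[f(X)\mid\mathcal{F}_i]-\mathbb{E}[f(X)\mid\mathcal{F}_{i-1}]$, so that $S_n=\sum_{i=1}^n Y_i = f(X)-\mathbb{E}[f(X)]$. Condition~(i) of Proposition~\ref{prop:exp}, namely $\mathbb{E}[Y_i\mid\mathcal{F}_{i-1}]=0$, holds automatically by the tower property. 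The whole matter then reduces to checking conditions~(ii) and~(iii) with the given $m$ and with $v=\sum_{i=1}^n v_i^2$.

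The central observation is that, thanks to the independence of the $X_i$, the difference $Y_i$ can be written explicitly. Conditionally on $X_1=x_1,\ldots,X_{i-1}=x_{i-1}$, one has $Y_i=\phi(X_i)-\mathbb{E}[\phi(X_i)]$, where $\phi(x_i)=\mathbb{E}[f(x_1,\ldots,x_{i-1},x_i,X_{i+1},\ldots,X_n)]$ is obtained by integrating out only the future coordinates. I would then relate $\phi$ to the single-coordinate fluctuations controlled by the hypotheses: for each fixed value of the future coordinates, the quantity $f(X_i,x_{-i})-\mathbb{E}[f(X_i,x_{-i})]$ has modulus at most $m$ and variance at most $v_i^2$ in $X_i$.

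For condition~(ii), writing $Y_i$ as the conditional expectation over the future coordinates of the fluctuation $f(X_i,x_{-i})-\mathbb{E}[f(X_i,x_{-i})]$ and invoking the triangle inequality for conditional expectations yields $|Y_i|\le m$ almost surely. For condition~(iii), the key fact is that averaging over the future coordinates cannot increase the variance: applying Jensen's inequality to the representation of $\phi$ gives $\mathbb{E}[Y_i^2\mid\mathcal{F}_{i-1}] = \mathrm{Var}_{X_i}(\phi(X_i)) \le \mathbb{E}_{X_{i+1},\ldots,X_n}\big[\mathrm{Var}_{X_i}(f(X_i,x_{-i}))\big]\le v_i^2$, whence $\sum_{i=1}^n \mathbb{E}[Y_i^2\mid\mathcal{F}_{i-1}]\le v$. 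Proposition~\ref{prop:exp} then applies and delivers exactly the estimate~\eqref{eq:mc2}.

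The main obstacle is conceptual rather than computational: the martingale differences $Y_i$ involve an expectation over the \emph{future} coordinates, whereas the hypotheses control the fluctuation of $f$ in the $i$-th coordinate with \emph{all} the others held fixed. The crux is to show that this partial averaging preserves both the uniform bound and the variance bound; the latter is the more delicate and rests on the variance-reducing property of conditional expectation (Jensen's inequality). Once this reconciliation is made, the reduction to Proposition~\ref{prop:exp} is routine.
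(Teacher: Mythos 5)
Your proposal is correct and takes essentially the same route as the paper: the paper's entire proof consists of defining the Doob martingale differences $Y_t = \mathbb{E}\big[ f(X) \mid X_1,\ldots,X_t \big] - \mathbb{E}\big[ f(X) \mid X_1,\ldots,X_{t-1} \big]$ and applying Proposition~\ref{prop:exp}. Your verification of conditions (ii) and (iii) of that proposition -- using independence to represent $Y_i$ as an average over the future coordinates of the single-coordinate fluctuation, then applying Jensen's inequality to preserve the bounds $m$ and $v_i^2$ -- correctly fills in exactly the steps the paper leaves implicit.
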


\begin{proof}
Define $Y_t = \mathbb{E}\left[  f(X) \mid X_1, \ldots, X_t \right] - \mathbb{E}\left[  f(X) \mid X_1, \ldots, X_{t-1} \right]$ and apply Proposition \ref{prop:exp}.
\end{proof}

\begin{lemma} \label{lm: convergence}
For all $k \in \mathbb{N}$, denote  {${\omega}_k= \frac{2}{k+2}$}.
Let $(u_k)_{k \in \mathbb{N}}$ and $(\gamma_k)_{k \in \mathbb{N}}$ be two sequences of real numbers. Assume that there exists a positive number $C$ such that
{\begin{equation} \label{eq:cond_cv_fw}
\gamma_{k+1} \leq (1- {\omega}_k) \gamma_k + C {\omega}_k^2 + u_k,
\end{equation}}
for all $k \in \mathbb{N}$.
{Then, for all $K \in \mathbb{N}^*$,
\begin{equation}\label{eq:fw}
\gamma_K \leq
\frac{4C}{K} + \sum_{k=0}^{K-1}\frac{(k+1)(k+2)}{K(K+1)} u_{k}.
\end{equation}}
\end{lemma}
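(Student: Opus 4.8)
The plan is to convert the one-step recursion \eqref{eq:cond_cv_fw} into a telescoping inequality by introducing a suitable integrating factor (weight). First I would make the coefficients explicit: since $\bar{\omega}_k = \frac{2}{k+2}$, we have $1-\bar{\omega}_k = \frac{k}{k+2}$ and $\bar{\omega}_k^2 = \frac{4}{(k+2)^2}$, so that \eqref{eq:cond_cv_fw} reads
\begin{equation*}
\gamma_{k+1} \leq \frac{k}{k+2}\,\gamma_k + \frac{4C}{(k+2)^2} + u_k .
\end{equation*}

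The key observation is that the weight $a_k := k(k+1)$ is exactly adapted to the contraction factor $1-\bar{\omega}_k$. Indeed,
\begin{equation*}
a_{k+1}\,(1-\bar{\omega}_k)
= (k+1)(k+2)\cdot\frac{k}{k+2}
= k(k+1)
= a_k .
\end{equation*}
Multiplying the recursion by $a_{k+1}=(k+1)(k+2)$ therefore yields
\begin{equation*}
a_{k+1}\gamma_{k+1} \leq a_k\gamma_k + \frac{4C(k+1)}{k+2} + (k+1)(k+2)\,u_k .
\end{equation*}
Summing this inequality over $k=0,\ldots,K-1$ telescopes the first two terms, and since $a_0 = 0\cdot 1 = 0$, the boundary term $a_0\gamma_0$ vanishes, leaving
\begin{equation*}
K(K+1)\,\gamma_K \leq \sum_{k=0}^{K-1}\frac{4C(k+1)}{k+2} + \sum_{k=0}^{K-1}(k+1)(k+2)\,u_k .
\end{equation*}

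To finish, I would bound the first sum using $\frac{k+1}{k+2}\leq 1$, which gives $\sum_{k=0}^{K-1}\frac{4C(k+1)}{k+2} \leq 4CK$; dividing through by $K(K+1)$ and using $\frac{4CK}{K(K+1)}=\frac{4C}{K+1}\leq \frac{4C}{K}$ produces precisely the claimed bound \eqref{eq:fw}. There is no serious obstacle here: the only genuinely creative step is guessing the integrating factor $a_k=k(k+1)$ (equivalently, recognising that the quadratic weights $(k+1)(k+2)$ appearing in the conclusion come from the recursion's contraction coefficient), after which the argument is a routine telescoping computation. I would also remark that the bound is stated for real sequences with no sign restriction on $u_k$, so no positivity or boundedness of $\gamma_k$ is needed; only $C>0$ and the one-step estimate are used.
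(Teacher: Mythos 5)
Your proof is correct, but it takes a genuinely different route from the paper. The paper proceeds by induction on $k$: it verifies the base case $\gamma_1 \leq C + u_0$ (using $\bar{\omega}_0 = 1$), then plugs the claimed bound for $\gamma_k$ into the recursion and checks that the result is dominated by the claimed bound for $\gamma_{k+1}$, the key numerical fact being $\big(\tfrac{1}{k+2} + \tfrac{1}{(k+2)^2}\big)4C \leq \tfrac{4C}{k+1}$, i.e.\ $(k+1)(k+3) \leq (k+2)^2$. You instead construct the bound via the integrating factor $a_k = k(k+1)$, exploiting the exact identity $a_{k+1}(1-\bar{\omega}_k) = a_k$, and telescope; the vanishing boundary term $a_0 = 0$ conveniently removes any dependence on $\gamma_0$, just as $\bar{\omega}_0 = 1$ does in the paper's base case. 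The two mechanisms are dual to one another (telescoping is an unrolled induction), but yours is constructive — it explains where the weights $(k'+1)(k'+2)/\big(k(k+1)\big)$ come from rather than verifying them after the fact — and it yields the marginally sharper intermediate constant $\tfrac{4C}{K+1}$ before relaxing to $\tfrac{4C}{K}$. The paper's induction is shorter to write down once the closed form is guessed, but gives no hint of how to guess it. Both arguments are complete and use only the stated hypotheses (real-valued sequences, no sign condition on $u_k$).
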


\begin{proof}
We proof this lemma by induction on $K$.
We have ${\omega}_0=1$, thus taking $k=0$ in \eqref{eq:cond_cv_fw}, we obtain that $\gamma_1 \leq C  + u_0$, which proves the claim for $K=1$.
Let us assume that the claim holds true for some $K \in \mathbb{N}^*$. We deduce from \eqref{eq:cond_cv_fw} that
\begin{align*}
\gamma_{K+1} \leq \ & \Big(\frac{1}{K+2}+\frac{1}{(K+2)^2} \Big) 4C + \frac{K}{K+2} \Big( \sum_{k=0}^{K-1} \frac{(k+1)(k+2)}{K(K+1)} u_{k} \Big) + u_{K}\\
\leq \ & \frac{4C}{K+1} + \sum_{k=0}^{K} \frac{(k+1)(k+2)}{(K+1)(K+2)} u_{k}.
\end{align*}
Therefore the claim holds for $K+1$. This concludes the proof.
\end{proof}

\begin{lemma} \label{lem:new_bernoulli}
Let $A$, $B$, and $C$ be three random variables. Assume that $B$ is independent of $(A,C)$ and that $B \sim \Bern(\omega)$ for some $\omega \in [0,1]$. Let $F$ be a real-valued function of $(A,B,C)$. Assume that
$| F(A,1,C)- F(A,0,C) | \leq \delta, \text{a.s.}$
Finally, define $U= \mathbb{E} [F(A,B,C) \mid A,B] - \mathbb{E} [F(A,B,C) \mid A]$.
Then,
\begin{equation*}
\mathbb{E}[U \mid A]= 0, \qquad
U \leq \delta, \qquad
\mathbb{E} [U^2 \mid A] \leq \omega (1-\omega) \delta^2, \quad \text{a.s.}
\end{equation*}
\end{lemma}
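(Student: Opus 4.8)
The plan is to reduce everything to a two-point (Bernoulli) computation by conditioning on $A$ and exploiting the independence of $B$ from $(A,C)$. First I would introduce, for $b \in \{0,1\}$, the conditional integrals $\psi_b := \mathbb{E}[F(A,b,C) \mid A]$. The independence of $B$ from $(A,C)$ is exactly what is needed to identify the two conditional expectations appearing in the definition of $U$: since the conditional law of $C$ given $(A,B)$ coincides with its conditional law given $A$, one has $\mathbb{E}[F(A,B,C) \mid A,B] = \psi_B$; and since $B$ is independent of $A$ with $B \sim \Bern(\omega)$, one has $\mathbb{E}[F(A,B,C) \mid A] = (1-\omega)\psi_0 + \omega\psi_1$. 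This is the only genuinely measure-theoretic step, and I expect it to be the main (though mild) obstacle: one must justify that conditioning additionally on the independent variable $B$ does not alter the $C$-average, which I would argue from the factorization of the joint law.

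Once these identities are in place, I would write $U$ explicitly as a function of $B$. Setting $\Delta := \psi_1 - \psi_0$, a direct substitution gives $U = -\omega\Delta$ on the event $\{B=0\}$ and $U = (1-\omega)\Delta$ on $\{B=1\}$. From the hypothesis $|F(A,1,C) - F(A,0,C)| \leq \delta$ a.s., the monotonicity of conditional expectation (i.e.\ $|\mathbb{E}[X \mid A]| \leq \mathbb{E}[|X| \mid A]$) yields $|\Delta| = |\mathbb{E}[F(A,1,C) - F(A,0,C) \mid A]| \leq \delta$ a.s. In particular $|U| \leq \max(\omega, 1-\omega)\,|\Delta| \leq \delta$, which gives the second claim $U \leq \delta$.

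It then remains to take the two remaining conditional moments against the $\Bern(\omega)$ law of $B$, which is independent of $A$. For the mean, $\mathbb{E}[U \mid A] = (1-\omega)(-\omega\Delta) + \omega\big((1-\omega)\Delta\big) = 0$, giving the first claim. For the second moment, $\mathbb{E}[U^2 \mid A] = (1-\omega)\,\omega^2\Delta^2 + \omega\,(1-\omega)^2\Delta^2 = \omega(1-\omega)\Delta^2$, and bounding $\Delta^2 \leq \delta^2$ gives the third claim $\mathbb{E}[U^2 \mid A] \leq \omega(1-\omega)\delta^2$ a.s. All three conclusions thus follow from the single scalar quantity $\Delta$ together with the elementary moments of a Bernoulli variable, so no further estimates are needed.
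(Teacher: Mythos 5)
Your proof is correct, and it takes a genuinely different route from the paper's. The paper introduces the auxiliary variable $Z = F(A,B,C) - \mathbb{E}[F(A,B,C) \mid A,C]$, identifies $U = \mathbb{E}[Z \mid A,B]$ (the independence of $B$ from $(A,C)$ enters here, through Lemma~\ref{lemma_stupid}), and carries out the two-point Bernoulli computation \emph{conditionally on $(A,C)$}: this gives $Z \le \delta$ a.s.\ and $\mathbb{E}[Z^2 \mid A,C] = \omega(1-\omega)\bigl(F(A,1,C)-F(A,0,C)\bigr)^2$, and the conclusions for $U$ then follow by conditional Jensen and the tower property. You perform the dual decomposition: you average over $C$ first, via the substitution identity $\mathbb{E}[F(A,B,C) \mid A,B] = \psi_B$, so that $U$ becomes an explicit two-valued function of $B$ scaled by the $A$-measurable scalar $\Delta = \psi_1 - \psi_0$, and all three claims reduce to moments of a $\Bern(\omega)$ variable \emph{conditionally on $A$}. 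Both arguments rest on a freezing property justified by independence (and in the application all the underlying variables are discrete, so the elementary justification suffices); the paper's version of this step is marginally lighter, since averaging the independent Bernoulli $B$ over frozen $(A,C)$ is immediate, whereas you must also identify the conditional law of $C$ given $(A,B)$ with that given $A$. In exchange, your route avoids Jensen's inequality on $U$ and produces the exact identity $\mathbb{E}[U^2 \mid A] = \omega(1-\omega)\Delta^2$, which is slightly sharper than the paper's bound, because $\Delta^2 \le \mathbb{E}\bigl[(F(A,1,C)-F(A,0,C))^2 \mid A\bigr]$; for the purposes of Theorem~\ref{thm:prob}, both estimates yield the same constants.
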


\begin{proof}
The equality $\mathbb{E}[U \mid A]= 0$ is trivial.
We have $U= \mathbb{E}[Z \mid A,B]$, where
\begin{equation*}
Z= F(A,B,C)- \mathbb{E}[F(A,B,C) \mid A,C].
\end{equation*}
It is easy to verify that $Z \leq \delta$, a.s., which implies that $\mathbb{E}[U \mid A]= \mathbb{E}[Z \mid A] \leq \delta$. The first inequality is proved. For the second inequality, we first note that
\begin{equation*}
\mathbb{E}[Z^2 \mid A,C]
= \omega (1-\omega) (F(A,1,C)-F(A,0,C))^2,
\end{equation*}
as can be easily verified.
Thus $\mathbb{E}[Z \mid A] \leq \omega(1-\omega) \delta^2$.
Next by Jensen's inequality, we have $U^2 \leq \mathbb{E}[Z^2 \mid A,B]$. Therefore,
\begin{equation*}
\mathbb{E} [U^2 \mid A]
\leq \mathbb{E} \big[ \mathbb{E}[Z^2 \mid A,B] \mid A \big]= \mathbb{E}[Z^2 \mid A] \leq \omega(1-\omega) \delta^2,
\end{equation*}
as was to be proved.
\end{proof}

The following lemma is an elementary property of the conditional expectation. For the sake of simplicity, we only state it (and prove it) with discrete random variables.

\begin{lemma} \label{lemma_stupid}
Let $X$, $Y$, and $Z$ be three random variables. Assume that $Y$ and $Z$ are discrete and that $Z$ is independent of $(X,Y)$. Then, $\mathbb{E}\big[ X \mid Y,Z \big]= \mathbb{E} \big[ X \mid Y \big]$.
\end{lemma}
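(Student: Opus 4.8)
The plan is to prove the elementary conditional-expectation identity $\mathbb{E}[X \mid Y,Z] = \mathbb{E}[X \mid Y]$ directly from the definition, exploiting the discreteness of $Y$ and $Z$ together with the independence of $Z$ from the pair $(X,Y)$. Since $Y$ and $Z$ are discrete, I would work with the explicit formula for conditional expectation on the atoms of the $\sigma$-algebra generated by $(Y,Z)$. Concretely, for values $y$ and $z$ with $\mathbb{P}[Y=y,Z=z] > 0$, the quantity $\mathbb{E}[X \mid Y,Z]$ takes the value
\begin{equation*}
\mathbb{E}[X \mid Y=y, Z=z]
= \frac{\mathbb{E}\big[ X \, \mathbbm{1}_{\{Y=y\}} \mathbbm{1}_{\{Z=z\}} \big]}{\mathbb{P}[Y=y, Z=z]}
\end{equation*}
on the event $\{Y=y, Z=z\}$, and similarly $\mathbb{E}[X \mid Y]$ equals $\mathbb{E}[X \mid Y=y]$ on $\{Y=y\}$.

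The key step is then to use independence to factor the numerator and denominator. Since $Z$ is independent of $(X,Y)$, the random variable $X\,\mathbbm{1}_{\{Y=y\}}$ is independent of $\mathbbm{1}_{\{Z=z\}}$, so
\begin{equation*}
\mathbb{E}\big[ X \, \mathbbm{1}_{\{Y=y\}} \mathbbm{1}_{\{Z=z\}} \big]
= \mathbb{E}\big[ X \, \mathbbm{1}_{\{Y=y\}} \big] \, \mathbb{P}[Z=z],
\end{equation*}
and likewise $\mathbb{P}[Y=y,Z=z] = \mathbb{P}[Y=y]\,\mathbb{P}[Z=z]$. The factors $\mathbb{P}[Z=z]$ cancel, leaving exactly $\mathbb{E}[X \mid Y=y]$, which is precisely the value of $\mathbb{E}[X \mid Y]$ on the same atom. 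Summing (or rather, identifying) over all admissible pairs $(y,z)$ shows that the two conditional expectations agree on every atom of positive probability, hence almost surely.

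I expect no genuine obstacle here; the statement is a standard fact and the discreteness assumption is made precisely to avoid measure-theoretic subtleties (one need not invoke the tower property or Dynkin-class arguments). The only point requiring minor care is the bookkeeping: one must restrict attention to pairs $(y,z)$ with $\mathbb{P}[Y=y,Z=z] > 0$, and observe that by independence this is equivalent to $\mathbb{P}[Y=y] > 0$ together with $\mathbb{P}[Z=z] > 0$, so no atom of $\{Y=y\}$ is lost when conditioning on $Z$ as well. With that observation the cancellation of $\mathbb{P}[Z=z]$ is legitimate, and the identity follows immediately.
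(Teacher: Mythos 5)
Your proposal is correct and follows essentially the same route as the paper's own proof: both compute the conditional expectation explicitly on the atoms $\{Y=y, Z=z\}$ of positive probability, use the independence of $Z$ from $(X,Y)$ to factor out $\mathbb{P}[Z=z]$ from numerator and denominator, and conclude that the resulting expression depends only on $y$. Your version merely spells out the factorization and the bookkeeping about which atoms carry positive probability, which the paper leaves implicit.
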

\begin{proof}
By definition, $\mathbb{E}\big[ X \mid Y,Z \big]= \phi(Y,Z)$, where $\phi$ is defined as follows: for any pair $(y,z)$ such that $\mathbb{P}\big[ Y=y \text{ and } Z= z\big]  \neq 0$,
\begin{equation*}
\phi(y,z)
=
\frac{\mathbb{E}\big[ X \mathbf{1}_{Y=y} \mathbf{1}_{Z=z} \big]}{\mathbb{P} \big[ Y=y \text{ and } Z=z \big]}
=
\frac{\mathbb{E}\big[ X \mathbf{1}_{Y=y} \big]}{\mathbb{P} \big[ Y=y \big]},
\end{equation*}
since $Z$ is independent of $(X,Y)$. Thus $\phi$ does not depend on $Z$ and the result follows.
\end{proof}

\bibliographystyle{plain}
\bibliography{biblio.bib}

\end{document}